\numberwithin{equation}{section}
\newcommand{\beq}{\begin{equation}}
\newcommand{\eeq}{\end{equation}}
\newcommand{\beqs}{\begin{eqnarray*}}
\newcommand{\eeqs}{\end{eqnarray*}}
\newcommand{\beqn}{\begin{eqnarray}}
\newcommand{\eeqn}{\end{eqnarray}}
\newcommand{\beqa}{\begin{array}}
\newcommand{\eeqa}{\end{array}}
\newtheorem{definition}{Definition}
\newtheorem{lemma}{Lemma}
\newtheorem{remark}{Remark}
\newtheorem{proposition}{Proposition}
\newtheorem{theorem}{Theorem}
\newtheorem{example}{Example}
\newtheorem{corollary}{Corollary}
\title  {sensitivity, transitivity and chaos in non-autonomous discrete systems}
\begin{document}




\bibliographystyle{plain}

\maketitle

\baselineskip=15.8pt
\parskip=3pt

\centerline {\bf   \small Hongbo Zeng}
\centerline {School of Mathematics and Statistics, Changsha University of Science and Technology;}
\centerline {Hunan Provincial Key Laboratory of Mathematical Modeling and Analysis in Engineering,}
\centerline {Changsha 410114, Hunan, China}

\vskip20pt

\noindent {\bf Abstract}:
In this paper, we study properties of sensitivity, transitivity and chaos for non-autonomous discrete systems(NDS). Firstly, we present some different sufficient conditions for NDS to be chaotic.  Then, we relate the transitivity with the sensitivity of NDS and give several sufficient conditions for NDS to be sensitive. We obtain that transitivity and dense periodic points imply sensitivity, and that transitive system is either sensitive or almost equicontinuous. The results improve and extend some existing ones. Besides, we give some examples to show that there is a significant difference between the theory of ADS and the theory of NDS. We get that almost periodic point and minimal point do not imply each other and that two definitions of minimal system are not equivalent for non-autonomous discrete systems. Finally, we introduce and study weakly sensitivity for non-autonomous discrete systems.

 \vskip20pt
 \noindent{\bf Key Words:}   sensitivity; transitivity; chaos; non-autonomous discrete system; minimal.
 \vskip20pt

\vskip20pt

\baselineskip=15.8pt
\parskip=3pt


\maketitle

\baselineskip=15.8pt
\parskip=3.0pt




\section{Introduction}

Dynamical systems theory is an effective mathematical mechanism which describes the time dependence of a point in a geometric space and has remarkable connections with different areas of mathematics such as topology and number theory. It is used to deal with the complexity, instability, or chaoticity in the real world, such as in meteorology, ecology, celestial mechanics, and other natural sciences. In recent years, more and more scholars have begun to devote themselves to the research in topological dynamical systems, and have achieved many significant results (see \cite{q12,q7}). The chaos theory is one of the most important components of dynamical systems, which was first strictly defined by Li and Yorke in 1975 \cite{a1}. Since then, different people from different fields gave different definitions of chaos under their understanding of the subject. In 1986, Devaney proposed the widely accepted definition of chaos \cite{a3}. Later, Banks et al found that in the three conditions defining Devaney chaos, topological transitivity and dense periodic points together imply sensitivity \cite{q8}. Moreover, Kato defined the everywhere chaotic in \cite{a4}.  Sensitive dependence on initial conditions(briefly, sensitivity), first defined in \cite{q9}, is also one of the most remarkable components of dynamical systems theory, which is closely linked to different variants of mixing. It characterizes the unpredictability in chaotic phenomena and is an integral part of different types of chaos. Afterwards, Li-Yorke sensitivity \cite{q10} and some stronger forms of sensitivity (including cofinite sensitivity, multisensitivity, and syndetic sensitivity) \cite{q11} were successively proposed. Besides, as we all know, topological transitivity (shortly, transitivity) has been an eternal topic in the study of topological dynamical systems, which is another crucial measure of system complexity \cite{q12,q7}.

\par
   As a natural extension of autonomous discrete dynamical systems (Abbrev. ADS), non-autonomous discrete dynamical systems (NDS) are an important part of topological
dynamical systems. Compared with classical dynamical systems (ADS), NDS can
describe various dynamical behaviors more flexibly and conveniently. Indeed, most of the natural phenomena, whose behavior is influenced by external forces, are time dependent external forces. As a result, many of the methods, concepts and results of autonomous dynamical systems are not applicable. Therefore, there is a strong need to study and develop the theory of time variant dynamical systems, that is, non-autonomous dynamical systems. As a consequence, the techniques used in this context are, in general, different from those used for autonomous systems and make this discipline of great interest. In such systems the trajectory of a point is given by successive application of different maps. These systems are related to the theory of difference equations, and in general, they provide a more adequate framework for the study of natural phenomena that appear in biology, physics, engineering, etc. Meanwhile, the dynamics of non-autonomous discrete systems has became
an active research area, obtaining results on topological entropy, sensitivity, mixing properties, chaos, and other properties.

The notion of non-autonomous dynamical system was introduced by Kolyada and Snoha \cite{a14} in 1996. Since then, the study of chaos and complexity of non-autonomous dynamical systems has seen remarkable increasing interest of many researchers, see \cite{q16,q4} and the references therein. Chaos and sensitivity for NDS were introduced by Tian and Chen  in 2006 \cite{q16}. Then, \cite{q19} proposed the concept of Devaney chaos for NDS, and asked as an open problem that whether or not the Bank theorem (topological transitivity and dense periodicity imply sensitivity)  can be generalized from ADS to NDS. However, Miralles et al \cite{q14} claimed that it is not true in general and proved that it is true if NDS is uniformly convergent. Then Yang and Li gave another sufficient condition for this result on
a finitely generated NDS\cite{q2}. Cnovas\cite{a19} studied the limit behaviour of sequence. Wu and Zhu \cite{1a} proved that the chaotic behaviour
of NDS is inherited under iterations. Recently, sensitivity and stronger forms of sensitivity are widely studied in non-autonomous systems \cite{q5}.
The interested reader in chaos, transitivity and sensitivity for NDS might consult \cite{q23,q24,q25}.

Motivated by the above results, this paper is devoted to further study the properties of chaos, transitivity and sensitivity in non-autonomous discrete systems.

\par
   This paper is organized as follows. In Section 2, we will first state some preliminaries, definitions and some lemmas. The main conclusions will be given in Section 3. We show that for compact and commutative non-autonomous discrete system, topologically transitivity and fixed point imply Li-Yorke chaos. And we prove that weakly
mixing implies Kato's chaos. Then we give some sufficient conditions under which transitivity and dense periodic points imply sensitivity. We also show that for compact and finitely generated non-autonomous discrete system, if
$f_{1,\infty}$ is transitive and commutative, then it is either sensitive or almost equicontinuous. Furthermore, we give several examples to show that the dynamical properties in NDS are quite different from what in ADS. The example shows that the Theorem 5.1 in \cite{q5} is not true. We also explore the relationship of sensitivity between ADS and NDS for periodic NDS. Finally, Section 4 posts several open problems for future research.

\section{Preparations and lemmas}
In this section, we mainly give some different concepts of chaos, sensitivity and transitivity for NDS (see,for example,\cite{q14,q5,q18}) and some lemmas.

Assume that $\mathbb{N}=\{1,2,3,...\}$ and $\mathbb{Z_+}=\{0,1,2,3,...\}$. Let $(X,d)$ be a metric
space and $f_n : X \rightarrow X$ be a sequence of continuous functions, where $n\in \mathbb{N}$. An non-autonomous discrete
dynamical systems (NDS) is a pair $(X, f_{ 1,\infty} )$ where $f_{ 1,\infty}= ( f_n )_{n=1}^\infty$. Given $i,n\in \mathbb{N}$.
Define the composition
$f_i^n:= f_{i+(n-1)} \circ \cdot\cdot\cdot \circ f_i$ ,
and usual $f_i^0=id_X$. In particular, if $f_n = f$ for all $n \in \mathbb{N}$, the pair $(X, f_{ 1,\infty} )$ is
just the classical discrete system $(X, f)$. The orbit of
a point $x$ in $X$ is the set
$orb(x, f_{ 1,\infty}):=\{x,f_1^1(x),f_1^2(x),...,f_1^n(x),...\}$,
which can also be described by the difference equation $x_0 = x$ and $x_{n+1} = f_n (x_n )$.  Denote $f_1^{-n}=(f_1^{n})^{-1}=f_1^{-1}\circ f_2^{-1}\circ\cdots\circ f_n^{-1}$.
We always suppose that $(X, f_{ 1,\infty} )$  is a non-autonomous discrete system and that all the maps are continuous from $X$ to $X$ in the following context. Denote $B(x,\varepsilon)$ the open ball of radius $\varepsilon>0$ with center $x$ and $id$ the identity map on $X$.

\begin{definition}
 A point $x\in X$ is called an equicontinuity point for $f_{ 1,\infty}$ if $f_{ 1,\infty}$ is equicontinuous at $x$, this means that for any $\varepsilon>0$, there is $\delta>0$ such that for any $y\in B(x,\delta)$, $d(f_1^n(x),f_1^n(y))<\varepsilon$ for any $n\in \mathbb{Z_+}$. We denote by $Eq(f_{ 1,\infty})$ the set of equicontinuity points of $f_{ 1,\infty}$. The non-autonomous discrete system $(X,f_{1,\infty})$ is said to be almost equicontinuous if it has a dense set of equicontinuity points, and it is called equicontinuous if $Eq(f_{ 1,\infty})=X$.
\end{definition}

\begin{definition}
 A point $x\in X$ is a recurrent point of $f_{ 1,\infty}$ if there exist an increasing sequence $\{n_i\}$ of positive integers such that
$$\lim_{i\rightarrow\infty}f_1^{n_i}(x)=x.$$ Denote the set of all  recurrent points of $f_{ 1,\infty}$ by $R(f_{ 1,\infty})$.
We denote by $int(A)$ the interior of $A$ and by $\overline{A}$ its closure. A point $x\in X$ is said to be nonrecurrent if it is not recurrent, that is, $x\notin \overline{\cup_{i=1}^\infty f_1^i(x)}$.
\end{definition}

\begin{definition}
The non-autonomous system $(X, f_{ 1,\infty} )$  is said to be (topologically) transitive, if for any two non-empty subset $U,V\subseteq X$, there exists $n\in \mathbb{N}$ such that $f_1^n(U)\cap V\neq\emptyset$. The non-autonomous system $(X, f_{ 1,\infty} )$  is said to be weakly mixing, if for any four non-empty subset $U_1,U_2,V_1,V_2\subseteq X$, there exists $n\in \mathbb{N}$ such that $f_1^n(U_1)\cap V_1\neq\emptyset$ and $f_1^n(U_2)\cap V_2\neq\emptyset$. The non-autonomous system $(X, f_{ 1,\infty} )$  is said to be mixing, if for any two non-empty subset $U,V\subseteq X$, there exists $N\in \mathbb{N}$ such that $f_1^n(U)\cap V\neq\emptyset$ for any $n\geq N$.  A point $x\in X$ is said to be transitive if the orbit of $x$ is dense in $X$. For any two non-empty subsets $U,V\subseteq X$, denote $N_{f_{ 1,\infty}}(U,V)=\{n\in \mathbb{N} \mid f_1^n(U)\cap V\neq\emptyset\}$.

\end{definition}

\begin{definition}
 A non-empty subset $U\subseteq X$ is called an invariant set of $f_{ 1,\infty}$ if $f_i(U)\subseteq U$ for any $i\in \mathbb{N}$. A set $U$ is called minimal subset of $f_{ 1,\infty}$ if it is a non-empty, invariant closed subset of $X$ and no proper subset of $U$ is non-empty, invariant and closed. A point $x\in X$ is called a minimal point if its closure orbit is a minimal subset of $f_{ 1,\infty}$. (M1) The non-autonomous system $(X, f_{ 1,\infty} )$  is said to be minimal if $X$ has no non-empty, invariant, proper and closed subset.
 (M2) The non-autonomous system $(X, f_{ 1,\infty} )$  is said to be minimal if all the points of $X$ are transitive. A non-empty subset $U\subseteq X$ is called an inversely invariant set of $f_{ 1,\infty}$ if $f_i^{-1}(U)\subseteq U$ for any $i\in \mathbb{N}$. The non-autonomous system $(X, f_{ 1,\infty} )$  is said to be feeble open if $int(f_i(U))\neq\emptyset$ for any nonempty open set $U\subseteq X$ and any $i\in \mathbb{N}$.
\end{definition}

\begin{remark}
There are two different definitions of minimal non-autonomous system $(X, f_{ 1,\infty} )$. The definition of M1 can be found in \cite{q5} and M2 can be found in \cite{q14}. We will show that M1 implies M2 in the following theorem \ref{minimal1} and that  M2 need not imply M1 in the following example \ref{minimal2}.
\end{remark}

\begin{definition}
A family $f_{ 1,\infty}$ is said to be commutative(abelian), if each of its member commutes with every other member of the family, that is, $f_i\circ f_j=f_j\circ f_i$ for any $i,j\in \mathbb{N}$.
\end{definition}

\begin{definition}
The non-autonomous system $(X, f_{ 1,\infty} )$  is said to be sensitive, if there is $\delta>0$ such that for any nonempty open set $U\subset X$, there exist $x,y\in U$ and $n\in \mathbb{N}$ such that $d(f_1^n(x),f_1^n(y))>\delta$. The non-autonomous system $(X, f_{ 1,\infty} )$  is said to be accessible, if for any $\varepsilon>0$ and any nonempty open set $U,V\subset X$, there exist $x\in U,y\in V$ and $n\in \mathbb{N}$ such that $d(f_1^n(x),f_1^n(y))<\varepsilon$. The non-autonomous system $(X, f_{ 1,\infty} )$ is said to be Kato's chaotic, if it is sensitive and accessible.

\end{definition}
Denote $N_{f_{1,\infty}}(U,\delta)=\{n\in\mathbb{N}: \text{there exist} \ x,y\in U \ \text{such that } d(f_1^n(x),f_1^n(x))<\delta \}$ for any nonempty open set $U$ of $X$. The upper density of $A\subseteq\mathbb{Z_+}$ is defined by $\bar{d}(A):=limsup_{n\rightarrow\infty}\frac{\sharp\{A\cap\{0,1,...,n-1\}\}}{n}$, where $\sharp A$ denotes the cardinality of the set $A$.
\begin{definition}
A non-autonomous system $(X, f_{ 1,\infty} )$ is said to be multi-sensitive, if there exists $\delta>0$ such that for any $m\in \mathbb{N}$ and any nonempty open sets $U_1,U_2,...,U_m\subset X$, $\bigcap_{i=1}^mN_{f_{1,\infty}}(U_i,\delta)\neq\emptyset$, where $\delta$ is called constant of sensitivity. A set $A\subseteq \mathbb{Z_+}$ is called syndetic if there exists a positive integer $M$ such that $\{i,i+1,...,i+M\}\cap A\neq\emptyset$ for every $i\in \mathbb{Z_+}$, i.e. it has bounded gaps. A set $A\subseteq \mathbb{Z_+}$ is called cofinite if there exists $N\in \mathbb{Z_+}$ such that $A\supseteq[N,\infty]\cap \mathbb{Z_+}$. A non-autonomous system $(X, f_{ 1,\infty} )$ is said to be syndetically sensitive(cofinitely sensitive, ergodically sensitive, respectively), if there exists $\delta>0$ such that for any nonempty open set $U\subset X$, $N_{f_{1,\infty}}(U,\delta)$ is a syndetic set(cofinitely set, set with positive upper density, respectively).
\end{definition}

\begin{definition}
 A set $A\subseteq \mathbb{Z_+}$ is called thick if it contains arbitrarily long runs of positive integers, that is, for any $p\in \mathbb{N}$, there exists some $n\in \mathbb{N}$ such that $\{n,n+1,...,n+p\}\subseteq A$. A non-autonomous system $(X, f_{ 1,\infty} )$ is said to be thickly sensitive, if there exists $\delta>0$ such that for any nonempty open set $U\subset X$, $N_{f_{1,\infty}}(U,\delta)$ is thick. A set $A\subseteq \mathbb{Z_+}$ is thickly  syndetic if $\{n\in \mathbb{N}\mid n+j\in A, \text{for } 0\leq j\leq k\}$ is syndetic for every $k\in\mathbb{N}$. A non-autonomous system $(X, f_{1,\infty} )$ is said to be thickly syndetically sensitive, if there exists $\delta>0$ such that for any nonempty open set $U\subset X$, $N_{f_{1,\infty}}(U,\delta)$ is thickly syndetic.
\end{definition}

\begin{definition}
 A point $x\in X$ is called $k$-periodic if $f_i^{kn}(x)=x$ for any $n\in \mathbb{N}$.  A point $x\in X$ is called fixed point if it is $1$-periodic.  A point $x\in X$ is called almost periodic point if for any $\varepsilon>0$, the set $\{n\in \mathbb{N} \mid d(f_1^n(x),x)<\varepsilon\}$ is syndetical.
\end{definition}

\begin{definition}
A non-autonomous system $(X, f_{ 1,\infty} )$ is said to be finitely generated, if there exists a finite set $F$ of continuous self maps on $X$ such that each $f_i$ of $f_{ 1,\infty}$ belongs to $F$.
\end{definition}

\begin{definition}
 If there exists an uncountable subset $S\subseteq X$ such that for any different points $x,y\in S$  we have
$$\liminf_{n\rightarrow\infty} d(f_1^{n}(x), f_1^{n}(y))=0,\ \limsup_{n\rightarrow\infty} d(f_1^{n}(x), f_1^{n}(y))>0,$$
then $f_{ 1,\infty}$ is said to be Li-Yorke chaos.
\end{definition}

\begin{definition}
 A non-autonomous discrete system $f_{ 1,\infty}$ is called Li-Yorke sensitive if there exists some $\delta>0$ such that for any $x\in X$ and any $\varepsilon>0$, there is $y\in B(x,\varepsilon)$ satisfying
 $$\liminf_{n\rightarrow\infty}d(g^n(x),g^n(y))=0 \text{   and   } \limsup_{n\rightarrow\infty}d(g^n(x),g^n(y))\geq\delta.$$
\end{definition}

\begin{lemma}\label{yinli1}
Let $\{n_k\}$ an increasing sequence of positive integers, then for any $n \in\mathbb{N}$, there exist an integer $r(0\le r<n)$, a subsequence $\{n_{k_j}\}$ of $\{n_k\}$ and an increasing sequence of positive integers $\{q_j\}$ such that $n_{k_j}=nq_j+r$.
\end{lemma}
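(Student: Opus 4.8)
The plan is to use the Euclidean division algorithm together with the pigeonhole principle; this is the standard device for producing a subsequence lying in a single residue class. For the fixed $n\in\mathbb{N}$, I would first divide each term of the sequence by $n$: by the division algorithm, for every index $k$ there are unique integers $q_k\ge 0$ and $r_k$ with $0\le r_k<n$ such that $n_k=nq_k+r_k$. This associates to the infinite increasing sequence $\{n_k\}$ an infinite sequence of remainders $\{r_k\}$.

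The key observation is that each remainder $r_k$ lies in the finite set $\{0,1,\dots,n-1\}$, which has exactly $n$ elements. Since $\{n_k\}$ is infinite, by the pigeonhole principle at least one value $r\in\{0,1,\dots,n-1\}$ is attained by $r_k$ for infinitely many indices $k$. Collecting these indices in increasing order produces a subsequence $\{n_{k_j}\}$ of $\{n_k\}$, all of whose terms leave the same remainder $r$ upon division by $n$; that is, $n_{k_j}=nq_{k_j}+r$ for every $j$.

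Finally, I would set $q_j:=q_{k_j}=(n_{k_j}-r)/n$. These are nonnegative integers (indeed positive once $n_{k_j}>r$), and since $\{n_{k_j}\}$ is strictly increasing while $r$ is fixed, the affine map $m\mapsto (m-r)/n$ is strictly increasing, so $\{q_j\}$ is an increasing sequence of positive integers. This yields exactly the desired representation $n_{k_j}=nq_j+r$ with $0\le r<n$.

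I do not anticipate any genuine obstacle: the statement is a direct application of the pigeonhole principle to the finitely many residue classes modulo $n$. The only minor points requiring verification are that the extracted subsequence is well defined, which is guaranteed by the infinitude of the chosen residue class, and that the positivity and monotonicity of $\{q_j\}$ follow automatically from the monotonicity of $\{n_{k_j}\}$.
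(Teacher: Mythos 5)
Your proof is correct: the paper states this lemma without proof (treating it as a standard fact), and your division-algorithm-plus-pigeonhole argument is precisely the canonical justification. The only point worth making explicit is that at most one term of the extracted subsequence can have quotient $0$ (namely a term equal to $r$), so after discarding at most that first term the $q_j$ are indeed positive, as you note parenthetically.
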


\begin{lemma}(\cite{q18})\label{yinlicd}
Suppose $X$ is a second countable space with the Baire property. If the non-autonomous discrete system $(X,f_{1,\infty})$ is topologically transitive, then it is point transitive.
\end{lemma}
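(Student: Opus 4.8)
The plan is to adapt the classical Birkhoff transitivity argument to the non-autonomous setting. Recall that point transitivity means the existence of a single point whose orbit $orb(x,f_{1,\infty})$ is dense in $X$; the strategy is to exhibit such a point as an element of a dense set produced by the Baire property. The key idea is that each point with dense orbit must, for every basic open set, eventually be mapped into that set, so I will manufacture the transitive point by intersecting the sets of points enjoying this property for each element of a countable base.

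First, since $X$ is second countable, fix a countable base $\{V_k\}_{k=1}^\infty$ of non-empty open sets for the topology of $X$. For each $k$, set
$$W_k := \bigcup_{n=1}^\infty f_1^{-n}(V_k) = \{x \in X : f_1^n(x) \in V_k \text{ for some } n \geq 1\},$$
the collection of points whose forward orbit meets $V_k$. I would then verify the two properties that feed into the Baire argument: that each $W_k$ is open and dense.

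Openness is immediate: each $f_i$ is continuous, so each composition $f_1^n = f_n \circ \cdots \circ f_1$ is continuous, hence $f_1^{-n}(V_k) = (f_1^n)^{-1}(V_k)$ is open, and $W_k$ is a union of open sets. For density, let $U \subseteq X$ be any non-empty open set; by topological transitivity there is some $n \in \mathbb{N}$ with $f_1^n(U) \cap V_k \neq \emptyset$, so some $x \in U$ satisfies $f_1^n(x) \in V_k$, giving $x \in U \cap W_k$. Thus every $W_k$ meets every non-empty open set, i.e. $W_k$ is dense.

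Finally, invoke the Baire property: the countable intersection $G := \bigcap_{k=1}^\infty W_k$ of dense open sets is dense, and in particular non-empty. Any $x \in G$ lies in every $W_k$, so its orbit meets every basic open set $V_k$, and since $\{V_k\}$ is a base this forces the orbit to be dense; hence $x$ is a transitive point and the system is point transitive. I do not anticipate a genuine obstacle here, since the non-autonomous composition $f_1^n$ behaves exactly like a single continuous map for the purposes of taking preimages. The only points requiring care are the continuity bookkeeping that keeps each $f_1^{-n}(V_k)$ open, and the observation that the index range ``$n \in \mathbb{N}$'' in the transitivity hypothesis is precisely what is needed to realize the density of each $W_k$.
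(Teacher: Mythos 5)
Your proof is correct. The paper does not actually prove this lemma---it is quoted from \cite{q18} without argument---and what you have written is precisely the classical Birkhoff transitivity argument that underlies the cited result: a countable base $\{V_k\}$, the open sets $W_k=\bigcup_{n\ge 1} f_1^{-n}(V_k)$ shown dense via transitivity, and the Baire property yielding a point of $\bigcap_k W_k$, whose orbit meets every $V_k$ and is therefore dense.
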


\begin{lemma}(\cite{q14})\label{yinlijmaa1}
Let $(X,d)$ be a metric space without isolated points. Suppose that $f_n$ converges pointwise to $f$. Then

(1) If $p$ is a periodic point for $(X, f_{ 1,\infty} )$ then $p$ is a periodic point for $(X, f)$.

(2) If there exists an infinite set of periodic points of $f$, then there exists $\eta>0$ such that for any $x\in X$ there is a periodic point $p$ of $f$ such that $d(x,f^n(p))\geq\eta$ for all $n \in\mathbb{N}$.

\end{lemma}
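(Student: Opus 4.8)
The plan is to treat the two parts separately, reducing each to a statement about the limit map $f$.

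For (1), suppose $p$ is $k$-periodic for $f_{1,\infty}$, so that $f_1^{kn}(p)=p$ for every $n\in\mathbb{N}$. The starting observation is the cocycle identity $f_1^{k(n+1)}=f_{kn+1}^{k}\circ f_1^{kn}$, which upon evaluation at $p$ and use of $f_1^{kn}(p)=p$ yields $f_{kn+1}^{k}(p)=p$ for all $n\ge 0$, where $f_{kn+1}^{k}=f_{kn+k}\circ\cdots\circ f_{kn+1}$. I would then let $n\to\infty$: each of the $k$ factors $f_{kn+j}$, $1\le j\le k$, converges to $f$, so one expects $f_{kn+1}^{k}(p)\to f^{k}(p)$ and hence $f^{k}(p)=p$, i.e. $p$ is periodic for $(X,f)$. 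The case $k=1$ is already instructive and completely elementary: $f_1^{n}(p)=p$ forces $f_n(p)=p$ for every $n$ by induction on $n$, whence $f(p)=\lim_n f_n(p)=p$.

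The main obstacle is precisely this passage to the limit in $f_{kn+1}^{k}(p)$ when $k\ge 2$. Expanding from the inside, $f_{kn+1}(p)\to f(p)$, but the next factor must be evaluated at the moving point $z_n:=f_{kn+1}(p)$; to conclude $f_{kn+2}(z_n)\to f^{2}(p)$ one splits $d(f_{kn+2}(z_n),f(f(p)))$ as $d(f_{kn+2}(z_n),f(z_n))+d(f(z_n),f(f(p)))$, where the second term vanishes by continuity of $f$ together with $z_n\to f(p)$, but the first term is controlled only if the convergence $f_n\to f$ is strong enough to dominate the moving argument, e.g. uniform convergence giving $\sup_X d(f_{kn+2},f)\to 0$. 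I would therefore carry out the induction on $j$ under uniform convergence (mere pointwise convergence does not control the inner arguments, and the conclusion can genuinely fail there), obtaining $f_{kn+1}^{k}(p)\to f^{k}(p)$ and thus (1). This is the sole place where the strength of the convergence hypothesis is used.

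For (2), I would argue by separating periodic orbits of $f$. Since distinct periodic orbits of the single map $f$ are disjoint and each is finite, hence closed, an infinite set of periodic points of $f$ produces at least two pairwise disjoint periodic orbits $O_1,O_2$ with $D:=d(O_1,O_2)>0$. I would set $\eta:=D/2$ and use the triangle inequality in the form $d(x,O_1)+d(x,O_2)\ge d(O_1,O_2)=2\eta$, valid for every $x\in X$; consequently $\max\{d(x,O_1),d(x,O_2)\}\ge\eta$, so at least one of $O_1,O_2$ stays $\eta$-away from $x$. Choosing $p$ to be a point generating that orbit gives $d(x,f^{n}(p))\ge d(x,O_i)\ge\eta$ for all $n\in\mathbb{N}$, which is the desired conclusion, with $\eta$ fixed once $O_1,O_2$ are chosen and working simultaneously for every $x$. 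This part uses neither the absence of isolated points nor the convergence $f_n\to f$, so the only real work is concentrated in the limit-of-compositions step of part (1).
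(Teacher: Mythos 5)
The paper never proves this lemma; it is imported from \cite{q14} with no argument supplied, so there is no internal proof to compare against and your proposal must be judged on its own merits. Your part (2) is correct and is the classical Banks-type argument: infinitely many periodic points of the single map $f$ must fill at least two distinct orbits $O_1,O_2$, which are disjoint, finite, hence a positive distance $D$ apart, and $d(x,O_1)+d(x,O_2)\ge D$ forces $\max\{d(x,O_1),d(x,O_2)\}\ge D/2=\eta$ for every $x$ simultaneously; you are also right that this part uses neither the convergence hypothesis nor the absence of isolated points.

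Part (1) is where the real issue lies, and there you did not prove the statement that was asked: you proved it with pointwise convergence upgraded to uniform convergence. Your instinct that this upgrade is unavoidable is in fact correct, and can be made concrete: with the paper's notion of periodicity ($f_1^{kn}(p)=p$ for all $n$, with nothing required of the intermediate iterates), the lemma as quoted is \emph{false} under pointwise convergence. A moving-spike example on $X=[0,1]$: let $f(x)=1/2+x/2$ on $[0,1/2]$ and $f(x)=3/4$ on $[1/2,1]$; let $f_{2n-1}$ coincide with $f$ off $[0,\epsilon_n]$ (with $\epsilon_n\to 0$) except that $f_{2n-1}(0)=a_n:=1/2+\tfrac{1}{n+2}$, and let $f_{2n}$ coincide with $f$ off an interval centered at $a_n$ of width less than $\tfrac12(a_n-1/2)$ except that $f_{2n}(a_n)=0$, interpolating continuously. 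Then all maps are continuous, $f_m\to f$ pointwise, and $f_1^{2n}(0)=0$ for every $n$, yet the $f$-orbit of $0$ is $0\mapsto 1/2\mapsto 3/4\mapsto 3/4\mapsto\cdots$, so $0$ is not periodic for $f$. Consequently no proof of the quoted statement exists; your uniform-convergence version is the correct repair, and it is the only version this paper actually needs (the lemma is invoked only inside a theorem that assumes uniform convergence). Two remarks. First, having asserted that the pointwise statement ``can genuinely fail,'' you should exhibit such a counterexample, or explicitly say you are proving a corrected lemma, rather than silently strengthening the hypothesis. Second, there is an alternative repair your approach misses, which keeps pointwise convergence but strengthens the periodicity notion as in \cite{q2}: if $f_1^{n+k}(p)=f_1^n(p)$ for all $n\ge 0$, the orbit is the finite cycle $\{x_0,\dots,x_{k-1}\}$, and along each progression $n\equiv j\pmod k$ the identity $f_{n+1}(x_j)=x_{j+1}$ holds for infinitely many $n$, so pointwise convergence at the finitely many orbit points already yields $f(x_j)=x_{j+1}$ and hence $f^k(p)=p$; no moving arguments ever occur.
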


\begin{lemma}(\cite{q14})\label{yinlijmaa2}
Let $(X,d)$ be a metric space without isolated points. If $(X, f_{ 1,\infty} )$ is transitive, then for any pair of non-empty open subsets $U,V$ of $X$, the set $N_{f_{ 1,\infty}}(U,V)$ is infinite.
\end{lemma}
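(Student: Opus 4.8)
The plan is to argue by contradiction: assume $N_{f_{1,\infty}}(U,V)$ is finite, and manufacture arbitrarily many \emph{distinct} elements of it by shrinking the source set $U$ while aiming at a sequence of pairwise disjoint targets inside $V$. The hypothesis that $X$ has no isolated points enters only to produce those disjoint targets; everything else is formal.

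First I would produce infinitely many pairwise disjoint nonempty open subsets of $V$. Fix a point $p\in V$. Since $V$ is an open neighborhood of $p$ and $p$ is not isolated in $X$, every neighborhood of $p$ meets $V\setminus\{p\}$, so $p$ is a limit point of $V$. Consequently the pairwise disjoint open annuli $A_k=\{x\in X: 2^{-(k+1)}<d(x,p)<2^{-k}\}$ satisfy $A_k\cap V\neq\emptyset$ for infinitely many $k$, and this yields a sequence $V_1,V_2,\dots$ of pairwise disjoint nonempty open sets with each $V_j\subseteq V$.

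The core step is a recursion that builds a decreasing chain of nonempty open sets $U=U_0\supseteq U_1\supseteq U_2\supseteq\cdots$ together with integers $m_1,m_2,\dots\in\mathbb{N}$. Given $U_{j-1}$ nonempty and open, apply transitivity to the pair $(U_{j-1},V_j)$ to obtain $m_j$ with $f_1^{m_j}(U_{j-1})\cap V_j\neq\emptyset$, and set $U_j=U_{j-1}\cap (f_1^{m_j})^{-1}(V_j)$. This $U_j$ is open because $f_1^{m_j}$ is continuous, and it is nonempty precisely by the choice of $m_j$. To extract the contradiction, pick any $x\in U_k$; then $x\in U_0=U$ and $f_1^{m_j}(x)\in V_j$ for every $j\le k$. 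Since the $V_j$ are pairwise disjoint, the points $f_1^{m_j}(x)$ are pairwise distinct, forcing $m_1,\dots,m_k$ to be pairwise distinct; and each $m_j\in N_{f_{1,\infty}}(U,V)$ because $x\in U$ and $f_1^{m_j}(x)\in V_j\subseteq V$. Hence $N_{f_{1,\infty}}(U,V)$ has at least $k$ elements for every $k$, so it is infinite, contradicting finiteness.

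The main difficulty is really just the design of this nesting: the trick is to restrict the domain to $U_j=U_{j-1}\cap(f_1^{m_j})^{-1}(V_j)$ so that a single point $x$ in the deepest set simultaneously realizes all of the earlier hits, and to use disjointness of the targets $V_j$ to convert ``distinct images'' into ``distinct return times.'' I expect the only genuine points needing care to be the verification that each $U_j$ remains nonempty and open, and the observation that disjointness of the $V_j$ (not merely distinctness of chosen points) is what guarantees $m_i\neq m_j$ — the step where the argument would silently collapse if the targets were allowed to overlap.
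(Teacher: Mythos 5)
Your core mechanism is correct, and it is the right one for this lemma; note that the paper itself gives no proof at all here (the statement is imported from \cite{q14}), so the comparison is with the standard argument, which is precisely your scheme: nest the domains $U_j=U_{j-1}\cap(f_1^{m_j})^{-1}(V_j)$ so that a single point in the deepest set witnesses all the earlier hits, and use pairwise disjoint targets $V_j\subseteq V$ to turn distinct images into distinct times. Your verification that each $U_j$ is open and nonempty, and that disjointness of the $V_j$ (not mere distinctness of witnesses) forces $m_i\neq m_j$, is exactly the content that matters. (The contradiction framing is superfluous, since the recursion directly produces $k$ distinct elements of $N_{f_{1,\infty}}(U,V)$ for every $k$, but that is harmless.)

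There is, however, a genuine flaw in your Step 1, which is the only place the hypothesis of no isolated points is used. From ``$p$ is a limit point of $V$'' it does \emph{not} follow that infinitely many of the prescribed dyadic annuli $A_k=\{x\in X: 2^{-(k+1)}<d(x,p)<2^{-k}\}$ meet $V$: the realized distances $d(x,p)$, $x\in V$, may avoid every open interval $(2^{-(k+1)},2^{-k})$. This is not a pathology: take $X=\Sigma_2$ with the standard shift metric $d(x,y)=2^{-\min\{i\,:\,x_i\neq y_i\}}$ --- the very space appearing in the paper's examples, which carries the transitive shift and has no isolated points. There every nonzero distance is exactly a power of $2$, so $A_k=\emptyset$ for every $k$ and every $p$, and your construction produces no sets $V_j$ at all. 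The repair is routine: since $p$ is a limit point of $V$, choose points $y_j\in V\setminus\{p\}$ recursively with $d(y_{j+1},p)<\tfrac{1}{3}d(y_j,p)$ and set $V_j=B\bigl(y_j,\tfrac{1}{3}d(y_j,p)\bigr)\cap V$; by the triangle inequality each point of $V_j$ lies at distance between $\tfrac{2}{3}d(y_j,p)$ and $\tfrac{4}{3}d(y_j,p)$ from $p$, and these ranges are pairwise disjoint, so the $V_j$ are pairwise disjoint nonempty open subsets of $V$. (Alternatively, use that in a metric space without isolated points every nonempty open set contains two nonempty open subsets with disjoint closures, and split iteratively: for each $k$ this gives $k$ pairwise disjoint targets, which is all your recursion needs.) With either replacement for the annuli, the rest of your proof goes through verbatim.
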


\section{Main results}

In this section, we focus on sensitivity, transitivity and chaos for NDS and their relationships. In the first two theorems, we give some different sufficient conditions for NDS to be chaotic. Then, we obtain several sufficient conditions for NDS to be sensitive. Moreover, we provide some examples to show a significant difference between the theory of ADS and the theory of NDS. Finally, we introduce the concept of  weakly sensitivity for non-autonomous discrete systems.

\begin{theorem}\label{budongdian0}
Let $(X,f_{1,\infty})$ be a compact and  commutative non-autonomous discrete system without isolated points. If $(X,f_{1,\infty})$ is topologically transitive and has a fixed point, then $(X,f_{1,\infty})$ is Li-Yorke chaos.
\end{theorem}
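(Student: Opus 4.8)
The plan is to isolate the two ingredients that the hypotheses hand us and then to amplify them into an uncountable scrambled set. Since $X$ is compact it is complete, separable, second countable and has the Baire property, so Lemma \ref{yinlicd} guarantees a transitive point. More precisely, fixing a countable base $\{V_i\}$, each set $\bigcup_{n\in\mathbb{N}}(f_1^n)^{-1}(V_i)$ is open and, by transitivity, dense, so the set $\mathrm{Tran}$ of transitive points is a dense $G_\delta$. Write $p$ for the fixed point; by definition $f_n(p)=p$ for every $n$, hence $f_1^n(p)=p$ for all $n\in\mathbb{Z_+}$.

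Next I would show that the proximal cell of $p$,
\[
\mathrm{Prox}(p)=\Big\{x\in X:\ \liminf_{n\to\infty} d(f_1^n(x),p)=0\Big\}=\bigcap_{k\in\mathbb{N}}\ \bigcup_{n\in\mathbb{N}}(f_1^n)^{-1}\big(B(p,1/k)\big),
\]
is residual. Each inner union is open, and it is dense because for any nonempty open $U$, transitivity (indeed Lemma \ref{yinlijmaa2}, giving infinitely many such $n$) yields some $n$ with $f_1^n(U)\cap B(p,1/k)\neq\emptyset$, i.e. $U\cap(f_1^n)^{-1}(B(p,1/k))\neq\emptyset$. Thus $\mathrm{Prox}(p)$ is a dense $G_\delta$. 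Any transitive point $x_0$ lies in $\mathrm{Prox}(p)$, since its dense orbit meets every $B(p,1/k)$ and so $d(f_1^{n_k}(x_0),f_1^{n_k}(p))=d(f_1^{n_k}(x_0),p)\to 0$ along suitable $n_k$; yet $x_0$ is not asymptotic to $p$, because a convergent orbit has countable closure while the perfect compact space $X$ is uncountable. Hence $(x_0,p)$ is already a Li--Yorke pair, and the real content of the theorem is to promote one such pair to an uncountable scrambled set.

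For the promotion I would run a Cantor scheme of nested closed balls $F_s$, $s\in\{0,1\}^{<\omega}$, with $F_{s0},F_{s1}\subseteq F_s$ disjoint and with diameters shrinking along branches, alternating two kinds of refinement. At the \emph{proximality} stages all finitely many current cells must be driven simultaneously into one small ball $B(p,\varepsilon_m)$ at a single common time $N_m$, with $\varepsilon_m\to 0$; this forces $\liminf_n d(f_1^n(x),f_1^n(y))=0$ for \emph{every} pair $x,y$ in the resulting Cantor set at once. At the \emph{separation} stages I would, at an increasing sequence of common times $M_j$, keep distinct level-$j$ cells $\delta_0$-separated under $f_1^{M_j}$; since any two distinct branches eventually lie in different cells, this yields $\limsup_n d(f_1^n(x),f_1^n(y))\geq\delta_0$ for all distinct branches, hence non-asymptoticity, with $\delta_0$ of the order of the diameter of $X$. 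Perfectness lets the scheme proceed with disjoint children, so the limit set $S$ is an uncountable scrambled set.

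The main obstacle is the \emph{common-time} demand in both kinds of stages: plain transitivity only makes each individual hitting-time set $N_{f_{1,\infty}}(F_s,B(p,\varepsilon))$ infinite (Lemma \ref{yinlijmaa2}), and two infinite sets of integers need not intersect, so one cannot in general send several cells to a prescribed target at the \emph{same} instant — this is a weak-mixing-type synchronization that is not free from transitivity alone. This is exactly where compactness and commutativity must enter. The natural device is to take pointwise limits of the iterates $f_1^n$ inside the compact space $X^X$; commutativity is what lets one organize these limit maps into a semigroup carrying a minimal idempotent $u$, and since $p$ is fixed one arranges $u(x)=p$ on a residual set, so that $u$ collapses uncountably many points to $p$ along one common net of times without ever needing hitting-time sets to meet. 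I expect verifying that mere commutativity (rather than, say, uniform convergence or finite generation) really supplies this idempotent structure in the non-autonomous setting to be the delicate technical heart of the argument; once the synchronized proximality is in hand, the separation built into the Cantor scheme and the routine Baire-category bookkeeping deliver the uncountable scrambled set, and hence Li--Yorke chaos.
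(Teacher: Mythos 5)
Your diagnosis of the difficulty is right, but your resolution of it is a genuine gap: everything in your sketch after the single Li--Yorke pair $(x_0,p)$ is unproven. The Cantor scheme needs, at each stage, a \emph{common} time driving all current cells into $B(p,\varepsilon_m)$ (and later common times keeping them separated), and, as you yourself observe, transitivity only makes each individual hitting-time set infinite. Your proposed fix --- pointwise limits of the $f_1^n$ in $X^X$ organized into a semigroup with a minimal idempotent collapsing a residual set to $p$ --- is not carried out, and it is doubtful as stated: the maps $\{f_1^n\}$ of a non-autonomous system do not form a semigroup under composition (commutativity gives $f_1^n\circ f_1^m=f_1^m\circ f_1^n$, but this composite is not any $f_1^j$), so the enveloping-semigroup/idempotent machinery you invoke has no obvious non-autonomous analogue. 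Since you explicitly flag this step as unverified, the proposal does not constitute a proof.

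The point you miss is that commutativity already hands you the synchronization, with no semigroup theory: if $\omega$ is a transitive point (Lemma \ref{yinlicd}) and $f_1^{n_j}(\omega)\to v$, where $v$ is the fixed point, then for any two points $z_i=f_1^{k_i}(\omega)$ on its orbit,
$$ f_1^{n_j}(z_i)=f_1^{n_j}\bigl(f_1^{k_i}(\omega)\bigr)=f_1^{k_i}\bigl(f_1^{n_j}(\omega)\bigr)\longrightarrow f_1^{k_i}(v)=v, $$
so \emph{all} points of $orb(\omega,f_{1,\infty})$ are driven to $v$ simultaneously along the single time sequence $\{n_j\}$. Thus $\liminf_n d(f_1^n(x_1),f_1^n(x_2))=0$ holds on the dense set $orb(\omega,f_{1,\infty})\times orb(\omega,f_{1,\infty})$ of $X\times X$, and the semicontinuity lemma (Lemma \ref{budongdian1}) upgrades this to a dense $G_\delta$ of proximal pairs (Lemma \ref{budongdian2}). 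The $\limsup>0$ half is handled the same way rather than by separation stages: commutativity makes every pair on $orb(\omega,f_{1,\infty})\times orb(\omega,f_{1,\infty})$ recurrent for the product system, so $R(f_{1,\infty}\times f_{1,\infty})$ is residual in $X\times X$ (Lemma \ref{budongdian4}), and a recurrent pair of distinct points satisfies $\limsup_n d(f_1^n(x_1),f_1^n(x_2))\geq d(x_1,x_2)>0$. Intersecting the two residual sets and applying Mycielski's theorem (Lemma \ref{budongdian5}) produces the uncountable scrambled set at once; no hand-built nested-ball construction, and no weak-mixing-type hypothesis, is needed.
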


To prove this theorem, we begin by giving some lemmas.
\begin{lemma}(\cite{q1})\label{budongdian1}
Let $h_i:X\rightarrow [0,+\infty]$ be upper semi-continuous for any $i\in\mathbb{N}$ and $a\in [0,+\infty]$. Define $$g(x)=\liminf_{i\rightarrow\infty}h_i(x), x\in X.$$ If $\overline{g^{-1}([0,a])}=X$, then $g^{-1}([0,a])$ is a $G_\delta$ set.
\end{lemma}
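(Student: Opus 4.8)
The plan is to write the sublevel set $g^{-1}([0,a])$ explicitly as a countable intersection of open sets, using nothing more than the upper semi-continuity of each $h_i$. The one fact I would invoke repeatedly is that, for a $[0,+\infty]$-valued upper semi-continuous function $h_i$, the set $\{x\in X: h_i(x)<c\}$ is open for every finite $c$. First I would dispose of the trivial case $a=+\infty$: here $g^{-1}([0,+\infty])=X$, which is open and hence $G_\delta$. So it suffices to treat $a<+\infty$.

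The heart of the argument is to unwind the definition of $\liminf$. Writing $g(x)=\liminf_{i\to\infty}h_i(x)=\sup_{N}\inf_{i\ge N}h_i(x)$, the inequality $g(x)\le a$ holds if and only if $\inf_{i\ge N}h_i(x)\le a$ for every $N\in\mathbb{N}$, which in turn holds if and only if for every $N$ and every $k\in\mathbb{N}$ there is some $i\ge N$ with $h_i(x)<a+\tfrac{1}{k}$. Translating these quantifiers into set operations yields the identity
$$ g^{-1}([0,a]) \;=\; \bigcap_{k\in\mathbb{N}}\ \bigcap_{N\in\mathbb{N}}\ \bigcup_{i\ge N}\ \bigl\{x\in X: h_i(x)<a+\tfrac{1}{k}\bigr\}. $$
Since $a+\tfrac{1}{k}$ is finite, upper semi-continuity makes each set $\bigl\{x: h_i(x)<a+\tfrac{1}{k}\bigr\}$ open, so each inner union $\bigcup_{i\ge N}\{\cdots\}$ is open, and the full expression is a countable intersection of open sets, i.e. a $G_\delta$ set. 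This completes the proof.

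The only step requiring care is the bookkeeping in the second paragraph: one must verify the logical equivalence ``$\liminf_i h_i(x)\le a$ iff for all $N,k$ there exists $i\ge N$ with $h_i(x)<a+\tfrac1k$'' directly from the $\sup_N\inf_{i\ge N}$ description (together with the extended-real convention), since everything else is immediate once the decomposition is in hand. I would also remark that the hypothesis $\overline{g^{-1}([0,a])}=X$ is in fact not used in establishing the $G_\delta$ conclusion; its purpose is to ensure in addition that this $G_\delta$ set is \emph{dense}, hence residual, which is the form in which the lemma is applied (for instance to the function $h_i(x,y)=d(f_1^i(x),f_1^i(y))$ on $X\times X$ with $a=0$ when producing scrambled pairs).
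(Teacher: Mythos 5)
Your proof is correct, and there is in fact nothing in the paper to compare it against: the lemma is stated with a citation to Xiong \cite{q1} and no proof is given in this paper, so your argument stands as a self-contained justification. The decomposition
$$ g^{-1}([0,a]) \;=\; \bigcap_{k\in\mathbb{N}}\ \bigcap_{N\in\mathbb{N}}\ \bigcup_{i\ge N}\ \bigl\{x\in X: h_i(x)<a+\tfrac{1}{k}\bigr\} $$
is the standard route, your handling of the extended-real conventions (the separate case $a=+\infty$, and the finiteness of $a+\tfrac1k$ needed for openness of the strict sublevel sets) is careful, and your closing observation is accurate: the hypothesis $\overline{g^{-1}([0,a])}=X$ is irrelevant to the $G_\delta$ conclusion and serves only to make the set dense, which is precisely the form (dense $G_\delta$, hence residual) in which the paper applies the lemma in Lemmas \ref{budongdian2} and \ref{budongdian3}.
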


\begin{lemma}\label{budongdian2}
Let $(X,f_{1,\infty})$ be a commutative non-autonomous discrete system  without isolated points. If $(X,f_{1,\infty})$ is point transitive and has fixed point, then there exists a dense $G_\delta$ set $B$ of $X\times X$ such that for any $(x_1,x_2)\in B$, $$\liminf_{n\rightarrow\infty}d(f_1^n(x_1),f_1^n(x_2))=0.$$
\end{lemma}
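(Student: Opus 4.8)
The plan is to realize the desired set on the product space and obtain the $G_\delta$ property essentially for free from Lemma \ref{budongdian1}, so that the whole burden is shifted onto a density statement. Concretely, I would set $h_n(x_1,x_2)=d(f_1^n(x_1),f_1^n(x_2))$ on $X\times X$. Each $h_n$ is continuous, hence upper semi-continuous, and $g(x_1,x_2):=\liminf_{n\to\infty}h_n(x_1,x_2)$ is exactly the quantity appearing in the statement. The target set is $B=g^{-1}(\{0\})=g^{-1}([0,0])$. By Lemma \ref{budongdian1} applied on $X\times X$ with $a=0$, once I show $\overline{B}=X\times X$ it follows that $B$ is a $G_\delta$ set, and therefore a dense $G_\delta$ set. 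Thus it suffices to prove that $B$ is dense.

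To prove density it is enough to show that for every pair of non-empty open sets $U_1,U_2\subseteq X$, every $\varepsilon>0$ and every $m\in\mathbb{N}$ there are $x_1\in U_1$, $x_2\in U_2$ and a single time $n\ge m$ with $d(f_1^n(x_1),f_1^n(x_2))<\varepsilon$. Indeed, writing $D_{k,m}=\{(x_1,x_2):\exists\,n\ge m,\ d(f_1^n(x_1),f_1^n(x_2))<1/k\}$, each $D_{k,m}$ is open and $B=\bigcap_{k,m}D_{k,m}$, so density of every $D_{k,m}$ already suffices. Let $p$ be the fixed point, so that $f_i(p)=p$ for every $i$ and hence $f_1^n(p)=p$ for all $n$. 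I would produce the common time by driving both $U_1$ and $U_2$ into the small ball $B(p,\varepsilon/2)$ at one and the same moment, exploiting $d(f_1^n(x_1),f_1^n(x_2))\le d(f_1^n(x_1),p)+d(p,f_1^n(x_2))$. Using the transitive point supplied by point transitivity together with Lemma \ref{yinlijmaa2}, each single-set return set $N_{f_{1,\infty}}(U_i,B(p,\varepsilon/2))$ is infinite.

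The hard part is to force these two return-time sets to meet at a common instant $n\ge m$, i.e. to obtain a genuine simultaneous approach to $p$ rather than two approaches at unrelated times. Here the fixed point and commutativity are essential. The fixed point lets me upgrade a single visit into a whole run: if $f_1^a(x)\in B(p,\eta)$ with $\eta$ small enough, then continuity at $p$ of the tail maps $f_{a+1}^{j}$ (each of which fixes $p$) keeps $f_1^{a+j}(x)=f_{a+1}^{j}(f_1^a(x))$ inside $B(p,\varepsilon/2)$ for $j=0,1,\dots,L$, so that $\{a,a+1,\dots,a+L\}\subseteq N_{f_{1,\infty}}(U_i,B(p,\varepsilon/2))$; letting $L$ grow shows that each such return set is thick. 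I would then align the two thick sets by the arithmetic extraction of Lemma \ref{yinli1}, passing to a common residue class and using commutativity (which lets the blocks $f_{a+1}^{j}$ and $f_1^{a}$ be reordered freely) to match the visiting times of $U_1$ and $U_2$, since a sufficiently long run of one set necessarily overlaps a visiting time of the other, yielding the required common $n\ge m$. The delicate point, and the one I expect to require the most care, is that in the non-autonomous setting the maps controlling a run depend on where the run starts, so the choices of $\eta$ and of $a$ must be made in the correct order; commutativity is precisely what keeps this bookkeeping consistent and prevents the dependence from becoming circular.

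Once a common time is available for arbitrary $U_1,U_2,\varepsilon$ and $m$, every $D_{k,m}$ is dense, and since $X\times X$ is a Baire space the intersection $B=\bigcap_{k,m}D_{k,m}$ is dense. Invoking Lemma \ref{budongdian1} then certifies that $B$ is $G_\delta$, so $B$ is the desired dense $G_\delta$ subset of $X\times X$, and by construction every $(x_1,x_2)\in B$ satisfies $\liminf_{n\to\infty}d(f_1^n(x_1),f_1^n(x_2))=0$.
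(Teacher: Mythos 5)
Your reduction --- define $h_n(x_1,x_2)=d(f_1^n(x_1),f_1^n(x_2))$, set $B=g^{-1}(0)$ for $g=\liminf_n h_n$, and invoke Lemma \ref{budongdian1} so that only density of $B$ remains --- is exactly the paper's frame. But your density argument contains genuine gaps. First, your ``run'' construction is circular in precisely the way you flag but do not resolve: to keep $f_1^{a+j}(x)=f_{a+1}^j(f_1^a(x))$ near $p$ for $j=0,1,\dots,L$ you need a modulus $\eta$ for the maps $f_{a+1}^j$, which depend on $a$; yet $a$ is produced by transitivity with target $B(p,\eta)$, so $\eta$ must be chosen first. Commutativity does not break this: reordering only gives $f_1^{a+j}=f_1^a\circ f_{a+1}^j$, whose modulus of continuity still depends on $a$. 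Second, even granting that both return sets $N_{f_{1,\infty}}(U_i,B(p,\varepsilon/2))$ are thick, two thick subsets of $\mathbb{N}$ can be disjoint (place longer and longer blocks in alternating, far-apart positions), so ``a long run of one overlaps a visit of the other'' would require one of the sets to be syndetic, which transitivity does not provide; Lemma \ref{yinli1} extracts a residue class from a single sequence and cannot align two unrelated sets. Third, your final step intersects the countably many dense open sets $D_{k,m}$ and appeals to Baire, but in this lemma $X$ is only a metric space without isolated points --- neither compact nor complete --- so $X\times X$ need not be a Baire space (consider $X=\mathbb{Q}$); note that Lemma \ref{budongdian1} needs density of $B$ itself, not of the sets $D_{k,m}$.

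The paper obtains density of $B$ directly, with no runs, no alignment, and no Baire: let $\omega$ be a transitive point and consider the dense set of pairs $orb(\omega,f_{1,\infty})\times orb(\omega,f_{1,\infty})$. For $z_i=f_1^{k_i}(\omega)$, commutativity of the family gives $f_1^{n}(z_i)=f_1^{n}(f_1^{k_i}(\omega))=f_1^{k_i}(f_1^{n}(\omega))$; choosing one subsequence $n_j$ with $f_1^{n_j}(\omega)\to v$ (the fixed point, which lies in the closure of the dense orbit), continuity of the two \emph{fixed} maps $f_1^{k_1},f_1^{k_2}$ yields $f_1^{n_j}(z_i)\to f_1^{k_i}(v)=v$ for both coordinates along the same times, hence $g(z_1,z_2)=0$ on a dense set. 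The point is that simultaneity comes for free on pairs of orbit points, because the shifting maps $f_1^{k_i}$ are finite compositions independent of $n_j$ --- exactly the quantifier order your construction cannot achieve for arbitrary pairs of open sets. If you replace your $x_i\in U_i$ by points of $orb(\omega,f_{1,\infty})$ (possible since the orbit is dense), your argument collapses to the paper's and all three gaps disappear.
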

\begin{proof}
Define $F:X\times X\rightarrow R$ by
$$F(x_1,x_2)=\liminf_{n\rightarrow\infty}d(f_1^n(x_1),f_1^n(x_2)), \forall (x_1,x_2)\in X\times X.$$
Let $\omega$ be a transitive point of $(X,f_{1,\infty})$. Then for any $z=(z_1,z_2)\in orb(\omega,f_{1,\infty})\times orb(\omega,f_{1,\infty})$, there exist $k_1,k_2\in\mathbb{Z_+}$ such that $f_1^{k_1}(\omega)=z_1$ and $f_1^{k_2}(\omega)=z_2$. Let $v$ be the fixed point of $(X,f_{1,\infty})$. Then there exists an increasing sequence $\{n_j\}$ of positive integers such that $$\lim_{j\rightarrow\infty}f_1^{n_j}(\omega)=v.$$ Besides, since $(X,f_{1,\infty})$ is commutative, therefore, for each $i=1,2$, $$\lim_{j\rightarrow\infty}f_1^{n_j}(z_i)=\lim_{j\rightarrow\infty}f_1^{n_j}(f_1^{k_i}(\omega))
=\lim_{j\rightarrow\infty}f_1^{k_i}(f_1^{n_j}(\omega))=f_1^{k_i}(v)=v.$$
This shows that $F(z)=0$. Since $\overline{orb(\omega,f_{1,\infty})\times orb(\omega,f_{1,\infty})}=X\times X$, $F^{-1}(0)$ is a dense $G_\delta$ set of $X\times X$ by lemma \ref{budongdian1}.

\end{proof}

\begin{lemma}\label{budongdian3}
If $R(f_{1,\infty})$ is dense in $X$, then $R(f_{1,\infty})$ is a dense $G_\delta$ set.
\end{lemma}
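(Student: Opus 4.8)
The plan is to realize $R(f_{1,\infty})$ as a level set of a $\liminf$ of continuous functions and then to quote Lemma \ref{budongdian1} directly. The first step is to rewrite the definition of recurrence in a form amenable to that lemma. I would establish that for any $x \in X$,
\[
x \in R(f_{1,\infty}) \iff \liminf_{n\to\infty} d(f_1^n(x), x) = 0.
\]
Indeed, if $x$ is recurrent there is an increasing sequence $\{n_i\}$ with $f_1^{n_i}(x) \to x$, so $d(f_1^{n_i}(x), x) \to 0$ and the $\liminf$ is $0$; conversely, if the $\liminf$ vanishes one extracts an increasing sequence $\{n_i\}$ along which $d(f_1^{n_i}(x), x) \to 0$, that is $f_1^{n_i}(x) \to x$, which is exactly recurrence.

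Next I would set $h_n(x) = d(f_1^n(x), x)$ for each $n \in \mathbb{N}$. Since each $f_1^n$ is continuous and the metric $d$ is continuous, each $h_n$ is continuous, in particular upper semi-continuous, with values in $[0,+\infty)$. Defining $g(x) = \liminf_{n\to\infty} h_n(x)$ exactly as in Lemma \ref{budongdian1}, the equivalence above gives
\[
R(f_{1,\infty}) = g^{-1}(\{0\}) = g^{-1}([0,0]),
\]
which is the set $g^{-1}([0,a])$ for the choice $a = 0$.

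Finally, the hypothesis that $R(f_{1,\infty})$ is dense reads $\overline{g^{-1}([0,0])} = X$, which is precisely the condition $\overline{g^{-1}([0,a])} = X$ required by Lemma \ref{budongdian1}. Applying that lemma yields that $g^{-1}([0,0]) = R(f_{1,\infty})$ is a $G_\delta$ set; being dense by assumption, it is a dense $G_\delta$ set, as claimed. I do not expect a serious obstacle: the only points needing care are the equivalence between recurrence and the vanishing of the $\liminf$ (a routine subsequence argument) and the verification that $x \mapsto d(f_1^n(x), x)$ is upper semi-continuous, which is immediate from continuity of the $f_1^n$. The essential content is entirely carried by Lemma \ref{budongdian1}, so the proof reduces to checking these hypotheses.
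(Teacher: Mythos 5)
Your proposal is correct and follows essentially the same route as the paper: both realize $R(f_{1,\infty})$ as the zero set of $g(x)=\liminf_{n\to\infty} d(f_1^n(x),x)$ and then invoke Lemma \ref{budongdian1} with $a=0$. Your write-up merely spells out the verifications (the recurrence--$\liminf$ equivalence and the upper semi-continuity of $x\mapsto d(f_1^n(x),x)$) that the paper leaves implicit.
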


\begin{proof}
Define $F:X\rightarrow [0,+\infty]$ by
$$F(x)=\liminf_{n\rightarrow\infty}d(f_1^n(x),x), \forall x\in X.$$
It is obvious that $x\in R(f_{1,\infty})\Leftrightarrow F(x)=0$. Thus, $F^{-1}(0)$ is dense in $X$. By lemma \ref{budongdian1}, $R(f_{1,\infty})=F^{-1}(0)$ is a dense $G_\delta$ set of $X$.

\end{proof}

\begin{lemma}\label{budongdian4}
Let $(X,f_{1,\infty})$ be a compact non-autonomous discrete system. If $(X,f_{1,\infty})$ is topologically transitive, then $R(f_{1,\infty}\times f_{1,\infty})$ is a dense $G_\delta$ set of $X\times X$.
\end{lemma}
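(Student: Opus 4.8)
The plan is to move everything to the product system and apply Lemma \ref{budongdian3}. First note that $(X\times X,\,f_{1,\infty}\times f_{1,\infty})$, where $f_n\times f_n$ acts coordinatewise and $(f_1^n\times f_1^n)(x_1,x_2)=(f_1^n(x_1),f_1^n(x_2))$, is itself a non-autonomous discrete system on the compact metric space $X\times X$ with the product metric. Reading Lemma \ref{budongdian3} for this system, it suffices to prove that $R(f_{1,\infty}\times f_{1,\infty})$ is \emph{dense} in $X\times X$; the $G_\delta$ property and hence residuality then come for free. So the entire task reduces to producing a recurrent point of the product inside every nonempty open box $U\times V$.

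I would build these recurrent points out of a single transitive point. By Lemma \ref{yinlicd} the hypothesis gives a transitive point, and the usual Baire argument upgrades this to a dense $G_\delta$ of transitive points: over a countable base $\{U_m\}$ the transitive points are $\bigcap_m\bigcup_{n\ge 1}(f_1^n)^{-1}(U_m)$, and each $\bigcup_{n\ge 1}(f_1^n)^{-1}(U_m)$ is open and, by transitivity, dense. In the setting of interest $X$ has no isolated points (as assumed in Theorem \ref{budongdian0}), so each transitive point $\omega$ is in fact recurrent: removing finitely many points from the dense forward orbit leaves it dense, so $\omega\in\overline{\{f_1^n(\omega):n\ge m\}}$ for every $m$, and there is an increasing sequence $n_i$ with $f_1^{n_i}(\omega)\to\omega$.

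The heart of the proof is to spread one recurrent $\omega$ across a full fibre. Fix a transitive recurrent $\omega$ with return times $n_i$ and fix $k\ge 0$; I claim $(\omega,f_1^k(\omega))$ is recurrent for the product. The first coordinate returns along $n_i$ by choice, and for the second coordinate $f_1^{n_i}(f_1^k(\omega))=f_1^k(f_1^{n_i}(\omega))\to f_1^k(\omega)$ by continuity of $f_1^k$, the middle equality being the commutation of $f_1^{n_i}$ with $f_1^k$. Thus $(\omega,f_1^k(\omega))\in R(f_{1,\infty}\times f_{1,\infty})$ for all $k$, so $\{\omega\}\times\overline{orb(\omega,f_{1,\infty})}=\{\omega\}\times X$ is contained in the closure of the recurrent set. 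Letting $\omega$ run over the dense set of transitive points, the closure of $R(f_{1,\infty}\times f_{1,\infty})$ contains $\{\text{transitive points}\}\times X$, which is dense in $X\times X$. Density of the recurrent set follows, and with the reduction of the first paragraph this finishes the proof.

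The step I expect to be the real obstacle is the simultaneous return of the two coordinates along one common sequence $n_i$. Returning in the first coordinate is supplied freely by compactness, but forcing $f_1^k(\omega)$ to return along the \emph{same} times $n_i$ rests on the identity $f_1^{n_i}\circ f_1^k=f_1^k\circ f_1^{n_i}$, i.e.\ on commutativity of the family; for a genuinely non-commuting family the return times of the two coordinates decouple, which is exactly the well-known gap between transitivity and product recurrence (closed, e.g., by weak mixing, under which $X\times X$ is itself transitive and the transitive-point argument applies there directly). I would therefore either invoke commutativity at this point, matching the hypotheses under which the lemma is used, or record weak mixing as the alternative route; the $G_\delta$ bookkeeping of the first paragraph is routine by comparison.
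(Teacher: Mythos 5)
Your proof is correct and its skeleton is the same as the paper's: reduce, via Lemma \ref{budongdian3} applied to the product system, to showing that $R(f_{1,\infty}\times f_{1,\infty})$ is dense, then manufacture recurrent pairs out of a transitive point supplied by Lemma \ref{yinlicd}. The difference is in what actually gets proved. The paper takes a single transitive point $\omega$ and simply asserts $orb(\omega,f_{1,\infty})\times orb(\omega,f_{1,\infty})\subseteq R(f_{1,\infty}\times f_{1,\infty})$; that unproved inclusion is precisely your ``real obstacle'': it needs $\omega$ to be recurrent, and it needs both coordinates to return along a \emph{common} sequence of times, which rests on $f_1^{n_i}\circ f_1^{k}=f_1^{k}\circ f_1^{n_i}$. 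You supply both ingredients explicitly (absence of isolated points gives recurrence of transitive points; commutativity gives the common return times for your pairs $(\omega,f_1^k(\omega))$, with $\omega$ ranging over a dense set), and your suspicion that these cannot be dropped is right: as stated, the lemma is false. Indeed the paper's own example $Y=X\cup\{a\}$ with $f_1\equiv a$, $f_2\equiv x_0$, $f_i=f$ for $i\ge3$ (take $X$ compact, say an irrational rotation) is a compact, topologically transitive NDS, yet $f_1^n(z)=f^{n-2}(x_0)$ for every $z\in Y$ and every $n\ge2$, so any product-recurrent pair must have equal coordinates; hence $R(f_{1,\infty}\times f_{1,\infty})$ lies in the diagonal and is nowhere near dense. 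Since Lemma \ref{budongdian4} is invoked only in the proof of Theorem \ref{budongdian0}, whose hypotheses include compactness, commutativity and no isolated points, your strengthened version is exactly what that application requires; the remaining difference from the paper is cosmetic — you cover $X\times X$ by fibres $\{\omega\}\times X$ over a dense set of transitive points, while the paper uses $orb(\omega,f_{1,\infty})\times orb(\omega,f_{1,\infty})$ for a single $\omega$, and both work once recurrence and commutativity are in hand.
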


\begin{proof}
Since $(X,f_{1,\infty})$ is topologically transitive, there exists a transitive point $\omega\in X$ by lemma \ref{yinlicd}, then $orb(\omega,f_{1,\infty})\times orb(\omega,f_{1,\infty})$ is dense in $X\times X$ and $orb(\omega,f_{1,\infty})\times orb(\omega,f_{1,\infty})\subseteq R(f_{1,\infty}\times f_{1,\infty})$. Thus, $R(f_{1,\infty}\times f_{1,\infty})$ is dense in $X\times X$. By lemma \ref{budongdian3}, $R(f_{1,\infty}\times f_{1,\infty})$ is a dense $G_\delta$ set of $X\times X$.

\end{proof}

\begin{lemma}(\cite{q1})\label{budongdian5}
Suppose that $X$ is a separable complete metric space dense in itself. If for any $n\in \mathbb{N}$, the set $P_n$ is residual in the product space $X^n$. Then there is a Mycielski set $K$ in $X$ such that for any $n\in \mathbb{N}$ and any pairwise different $n$ points $x_1,x_2,...,x_n$ in $K$, we have $(x_1,x_2,...,x_n)\in P_n$.
\end{lemma}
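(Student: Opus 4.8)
The plan is to run the classical Cantor-scheme (fusion) construction underlying the Kuratowski--Mycielski theorem. First I would reduce the residual sets to open dense ones: each $P_n$, being residual in the Baire space $X^n$ (a product of Polish spaces), contains a dense $G_\delta$, so I may fix open dense sets $G_{n,k}\subseteq X^n$ with $\bigcap_k G_{n,k}\subseteq P_n$; it then suffices to drive every relevant tuple of distinct points into every $G_{n,k}$. Since $X$ is separable I also fix a countable base $\{V_j\}_{j\in\mathbb{N}}$ of nonempty open sets, and I fix a bookkeeping that enumerates all requirements $(n,k)$, each appearing infinitely often.

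The goal is a countable family of Cantor sets $C_j\subseteq V_j$ whose union $K=\bigcup_j C_j$ is the required Mycielski set; a union of Cantor sets meeting every $V_j$ is automatically dense (a single Cantor set, being compact and closed, cannot be dense, which is exactly why the Mycielski set must be a countable union). I would build by recursion nonempty open sets $U_{j,s}$ indexed by $j\in\mathbb{N}$ and finite binary strings $s\in 2^{<\omega}$, with $\overline{U_{j,\emptyset}}\subseteq V_j$, satisfying the Cantor-scheme conditions in each coordinate $j$: $\overline{U_{j,s0}}\cup\overline{U_{j,s1}}\subseteq U_{j,s}$, $U_{j,s0}\cap U_{j,s1}=\emptyset$, and $\mathrm{diam}(U_{j,s})\to 0$ as $|s|\to\infty$; together with the key residuality condition that for each level $m$, once a requirement $(n,k)$ has been activated, every box $U_{j_1,s_1}\times\cdots\times U_{j_n,s_n}$ formed from $n$ pairwise distinct addresses $(j_i,s_i)$ with $|s_i|=m$ lies inside $G_{n,k}$. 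The splitting into two disjoint nonempty open children uses that $X$ is dense in itself: a nonempty open set contains two distinct points, hence two disjoint open subsets, so the tree can always be continued. At each finite stage only finitely many nodes and requirements are in play (a standard fusion, working with $j\le m$ and $|s|\le m$ at stage $m$), and I discharge the active requirements one at a time: to push a given box into the open dense set $G_{n,k}$ I pick a point of the box, move it slightly into $G_{n,k}$ by density, take an open subbox around it inside $G_{n,k}$ by openness, and shrink the finitely many coordinate sets accordingly.

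Completeness of $X$, the nesting, and $\mathrm{diam}\to 0$ then give, for each $j$ and each $f\in 2^{\mathbb{N}}$, a unique point $x_{j,f}\in\bigcap_m \overline{U_{j,f|m}}$; the map $f\mapsto x_{j,f}$ is a continuous injection (injective by the disjointness of children), so $C_j=\{x_{j,f}:f\in 2^{\mathbb{N}}\}$ is a Cantor set contained in $\overline{U_{j,\emptyset}}\subseteq V_j$. Hence $K=\bigcup_j C_j$ is a Mycielski set meeting every $V_j$, so $K$ is dense. Finally, let $x_{j_1,f_1},\dots,x_{j_n,f_n}$ be pairwise distinct points of $K$; then the addresses $(j_i,f_i)$ are pairwise distinct, so for each $k$ there is a level $m$ at which the truncations $(j_i,f_i|m)$ are pairwise distinct and $(n,k)$ is active. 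At that level the tuple sits in $U_{j_1,f_1|m}\times\cdots\times U_{j_n,f_n|m}\subseteq G_{n,k}$, and letting $k$ vary yields $(x_{j_1,f_1},\dots,x_{j_n,f_n})\in\bigcap_k G_{n,k}\subseteq P_n$, as required.

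The main obstacle is the bookkeeping in the construction: the coordinate sets $U_{j,s}$ are shared among many distinct boxes, so shrinking them to satisfy one requirement threatens boxes already arranged for earlier requirements. This is precisely where openness of the $G_{n,k}$ is essential---once a box sits inside an open set, any further shrinking of its factors keeps it there---so the finitely many requirements active at each stage can be met in sequence without undoing previous work, and the infinitely-often enumeration guarantees each $(n,k)$ is eventually and permanently satisfied at all higher levels. Interleaving this with the diameter control and the seeding $\overline{U_{j,\emptyset}}\subseteq V_j$ needed for density is the only genuinely delicate part; the rest is routine Cantor-scheme machinery.
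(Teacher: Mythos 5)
The paper does not actually prove this lemma: it is quoted with a citation to Xiong \cite{q1} (it is the Kuratowski--Mycielski theorem), so there is no in-paper proof to compare yours against; your proposal must be judged on its own. On its own it is essentially correct, and it is the standard fusion/Cantor-scheme proof of that theorem: replace each residual $P_n$ by a countable intersection of dense open sets $G_{n,k}$, grow a Cantor scheme of nonempty open sets inside each basic open set $V_j$ (splitting is possible because $X$ is dense in itself), push boxes indexed by pairwise distinct addresses into the $G_{n,k}$ using density and openness, and use completeness plus the diameter condition to extract the Cantor sets $C_j$ whose union is the desired dense Mycielski set. The one place needing care is your stated invariant that once $(n,k)$ is activated, \emph{every} box of pairwise distinct addresses at \emph{every} later level lies in $G_{n,k}$: this is not preserved by shrinking alone, because later stages create new nodes (sibling splits $s0,s1$ inside a single coordinate, and brand-new coordinates $j$) whose boxes were never treated at the activation stage. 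This is repaired in either of two standard ways: re-discharge all previously activated requirements at every stage (at each stage only finitely many requirements and finitely many boxes are in play, so this is feasible), or rely, as your final verification paragraph in fact does, on each $(n,k)$ recurring infinitely often in the enumeration, so that it is discharged at some level beyond the level where the given addresses have separated. With either bookkeeping the argument closes, so this is an imprecision of formulation rather than a genuine gap; the incidental remark that a single Cantor set can never be dense (false when $X$ is itself a Cantor space) is likewise harmless.
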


\noindent {\textit{Proof of Theorem \ref{budongdian0}}} : Put $D=R(f_{1,\infty}\times f_{1,\infty})\cap B$, where $B$ is identical in lemma \ref{budongdian2}. By lemma \ref{budongdian2} and lemma \ref{budongdian4}, $D\subseteq X\times X$ and it is a residual set. Since $X$ is compact, it is both complete and separable. By lemma \ref{budongdian5}, there exists a dense Mycielski set $K$ in $X$ such that for any $x_1,x_2\in K$ with $x_1\neq x_2$, $(x_1,x_2)\in D$. Therefore, $(X,f_{1,\infty})$ is Li-Yorke chaos.


The following result improves Theorem 3.4 in \cite{q28}.
\begin{theorem}\label{kato}
Let $(X,f_{1,\infty})$ be a non-autonomous discrete system. If
$f_{1,\infty}$ is weakly mixing, then $f_{1,\infty}$ is Kato's chaos.
\end{theorem}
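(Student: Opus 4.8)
The plan is to verify the two defining properties of Kato's chaos—accessibility and sensitivity—separately, each as a direct consequence of weak mixing. Throughout I would assume $X$ contains at least two points (otherwise sensitivity is impossible and the statement is degenerate), and fix once and for all two points $p,q\in X$ with $c:=d(p,q)>0$. The unifying idea is that weak mixing lets one steer a single open set toward two prescribed targets simultaneously; accessibility arises by taking the two targets close together, sensitivity by taking them far apart.

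I would handle accessibility first, as it is the softer of the two. Given $\varepsilon>0$ and nonempty open sets $U,V$, pick any point $w\in X$ and set $W:=B(w,\varepsilon/2)$, a nonempty open set. Applying weak mixing to the quadruple $(U,V,W,W)$ produces some $n\in\mathbb{N}$ with $f_1^n(U)\cap W\neq\emptyset$ and $f_1^n(V)\cap W\neq\emptyset$. Choosing $x\in U$ and $y\in V$ with $f_1^n(x),f_1^n(y)\in W$, the triangle inequality gives $d(f_1^n(x),f_1^n(y))\le d(f_1^n(x),w)+d(w,f_1^n(y))<\varepsilon$, which is exactly accessibility.

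For sensitivity I would claim the constant $\delta:=c/4$ works. Let $U$ be any nonempty open set. Apply weak mixing to the quadruple $(U,U,B(p,c/4),B(q,c/4))$ to obtain $n\in\mathbb{N}$ together with points $x,y\in U$ such that $f_1^n(x)\in B(p,c/4)$ and $f_1^n(y)\in B(q,c/4)$. Then
$$d(f_1^n(x),f_1^n(y))\ge d(p,q)-d(p,f_1^n(x))-d(q,f_1^n(y))>c-\tfrac{c}{4}-\tfrac{c}{4}=\tfrac{c}{2}>\delta.$$
Since $p,q$, and hence $\delta$, were fixed independently of $U$, this is precisely the uniform-constant requirement in the definition of sensitivity. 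Combining the two parts, $f_{1,\infty}$ is both accessible and sensitive, hence Kato's chaotic.

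I do not anticipate a genuine obstacle in this argument; its entire content is the observation just described. The one point demanding a little care is that sensitivity requires a single global $\delta$ valid for \emph{every} open set $U$, and this is secured precisely by fixing the separated pair $p,q$ at the outset rather than letting the targets depend on $U$. A secondary caveat worth flagging explicitly is the nondegeneracy of $X$ (at least two points), which is what makes $c>0$ available.
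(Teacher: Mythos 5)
Your proposal is correct and follows essentially the same route as the paper: accessibility is obtained by steering $U$ and $V$ into a single small open set $W$, and sensitivity by steering one open set $U$ simultaneously into small balls around two fixed points at positive distance, with the weak mixing quadruple applied exactly as in the paper's proof. The only differences are cosmetic (you build $W$ as a ball of radius $\varepsilon/2$ rather than taking $\operatorname{diam}(W)<\varepsilon/2$, and you use $\delta=c/4$ instead of $c/2$), plus your explicit nondegeneracy caveat, which the paper leaves implicit.
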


\begin{proof}
By definition of Kato's chaos, it suffices to show that $f_{1,\infty}$ is  accessible  and sensitive.

(i) For any $\varepsilon>0$ and nonempty open sets $U,V,W$ of $X$ with $diam(W)<\varepsilon/2$.
Since $f_{1,\infty}$ is weakly mixing, there exists $n\in \mathbb{N}$ such that $f_1^n(U)\cap W \neq \emptyset$ and $f_1^n(V)\cap W \neq \emptyset$.
That is, $\exists x\in U,y\in V$ satisfying $f_1^n(x)\in W$ and $f_1^n(y)\in W$. Therefore $d(f_1^n(x),f_1^n(y))\leq diam(W)<\varepsilon$. This means that $f_{1,\infty}$ is  accessible.

(ii) Take $x_1,x_2\in X$ with $x_1\neq x_2$, Note $r=d(x_1,x_2)$ and $\delta:=r/2$.  For any nonempty open set $U$ of $X$. Since $f_{1,\infty}$ is weakly mixing, there exists $m\in\mathbb{N}$ such that $f_1^m(U)\cap B(x_1,r/4) \neq \emptyset$ and $f_1^n(U)\cap B(x_2,r/4) \neq \emptyset$. That is, $\exists y_1\in U$ such that $f_1^n(y_1)\in B(x_1,r/4)$ and  $\exists y_2\in U$ such that $f_1^n(y_1)\in B(x_2,r/4)$. So we have $d(f_1^n(y_1),x_1)\leq r/4$ and $d(f_1^n(y_2),x_2)\leq r/4$. Therefore,
$d(f_1^n(y_1),f_1^n(y_2))> r/2=\delta$, which follows that $f_{1,\infty}$ is sensitive.
\end{proof}

It is well known that in the three conditions defining Denvaney chaos in ADS, topological transitivity and dense periodic points together imply sensitivity. However,  the result no longer holds for non-autonomous discrete system in general \cite{q14}. In \cite{q14}, the authors proved that if the sequence $f_n$ converges uniformly to $f$, then sensitivity follows. And in \cite{q2}, the authors proved that if the sequence $f_n$ is finitely generated and existing two invariant periodic points, then sensitivity follows. The following two theorems extend the results in \cite{q14} and \cite{q2}.

\begin{theorem}
Let $(X,f_{1,\infty})$ be a finitely generated non-autonomous discrete system. If

(1)$(X,f_{1,\infty})$ is topologically transitive,

(2)periodic points are dense in $X$,

(3)existing two invariant periodic points $x,y\in X$ with $\overline{orb(x,f_{1,\infty})}\cap \overline{orb(y,f_{1,\infty})}= \emptyset$,

then $(X,f_{1,\infty})$ is sensitive.
\end{theorem}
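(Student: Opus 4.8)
The plan is to adapt the classical argument of Banks et al.\ to the non-autonomous, finitely generated setting, using hypothesis (3) to manufacture a uniform constant of sensitivity and hypotheses (1)--(2) to drive an arbitrary open set toward one of the two separated periodic orbits. Write $O_1=\overline{orb(x,f_{1,\infty})}$ and $O_2=\overline{orb(y,f_{1,\infty})}$ for the two invariant periodic orbits of (3). Since $x,y$ are periodic these are finite invariant sets, hence compact, and by (3) they are disjoint, so $c:=d(O_1,O_2)>0$. I would set the candidate constant of sensitivity to be $\delta:=c/4$. The first routine observation is that, by the triangle inequality, no point can lie within $c/2$ of both $O_1$ and $O_2$; hence for every $z\in X$ there is an orbit $O\in\{O_1,O_2\}$ with $d(z,O)\geq c/2=2\delta$.

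Now fix an arbitrary nonempty open set $U\subseteq X$. Using (2), choose a periodic point $p\in U$, say of period $k$, so that $f_1^{km}(p)=p$ for every $m\in\mathbb{N}$. Apply the separation observation to $z=p$ to obtain an invariant periodic orbit $O\in\{O_1,O_2\}$ and a point $q\in O$ with $d(p,O)\geq 2\delta$. The goal is to produce $v\in U$ and a multiple $N$ of $k$ such that $f_1^N(v)$ lies strictly within $\delta$ of $O$: then, since $f_1^N(p)=p$ and $d(p,O)\geq 2\delta$, we get $d(f_1^N(p),f_1^N(v))> 2\delta-\delta=\delta$, exhibiting sensitivity on $U$ with the uniform constant $\delta$.

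The heart of the matter, and the place where finite generation enters, is the following padding step. Because $f_{1,\infty}$ is finitely generated by a finite set $F$ with $|F|=r$, every composition $f_{t+1}^{s}=f_{t+s}\circ\cdots\circ f_{t+1}$ with $0\leq s<k$ is a word of length less than $k$ in the alphabet $F$; there are at most $1+r+\cdots+r^{k-1}$ such words, finitely many. Each of them is continuous at the point $q$, so I can pick a single radius $\varepsilon'>0$ with $w\bigl(B(q,\varepsilon')\bigr)\subseteq B\bigl(w(q),\delta\bigr)$ for every such word $w$. Crucially, since $O$ is invariant under every $f_i$, we have $w(q)\in O$ for each of these words. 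By transitivity there exist $t\in\mathbb{N}$ and $v\in U$ with $f_1^{t}(v)\in B(q,\varepsilon')$; let $N$ be the least multiple of $k$ with $N\geq t$ and put $s:=N-t\in\{0,1,\dots,k-1\}$. Then $f_1^{N}(v)=f_{t+1}^{s}\bigl(f_1^{t}(v)\bigr)$ lies within $\delta$ of $f_{t+1}^{s}(q)\in O$, so $d\bigl(f_1^{N}(v),O\bigr)<\delta$, which is exactly what the previous paragraph requires.

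The main obstacle is precisely this padding step, which is automatic in the autonomous case but not here: in an NDS a partial composition $f_{t+1}^{s}$ applied to $q$ need not land back on $orb(q)$, and as $t$ ranges over the (infinitely many) transitive hitting times these partial compositions vary, so one cannot appeal to uniform continuity of a single finite list of maps. Hypothesis (3) resolves the first difficulty, since invariance of $O$ forces $f_{t+1}^{s}(q)\in O$, while finite generation resolves the second, collapsing the infinitely many compositions $f_{t+1}^{s}$ into finitely many words and so permitting the choice of one radius $\varepsilon'$ valid for all of them simultaneously. I would also remark that transitivity is used only through the nonemptiness of $N_{f_{1,\infty}}(U,B(q,\varepsilon'))$ (equivalently Lemma \ref{yinlijmaa2}), and that the constant $\delta=c/4$ depends only on the two fixed orbits, not on $U$, which is what the definition of sensitivity demands.
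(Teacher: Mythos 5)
Your proposal is correct and is essentially the paper's own proof: the same separation constant extracted from $d\bigl(\overline{orb(x,f_{1,\infty})},\overline{orb(y,f_{1,\infty})}\bigr)$, the same use of finite generation to reduce the partial compositions $f_{t+1}^{s}$ of length less than the period to finitely many words admitting one common continuity radius at a point of the far invariant orbit, and the same transitivity-plus-padding step to a multiple of the period of the dense periodic point. The only cosmetic differences are that you anchor all estimates at the periodic point $p\in U$ itself, so the sensitive pair $(p,v)$ appears directly instead of via the paper's concluding either/or triangle inequality centered at $x$, and that you correctly flag (what the paper leaves implicit) that positivity of $c$ rests on the two orbits being finite --- which in an NDS follows from periodicity \emph{together with} finite generation, not from periodicity alone.
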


\begin{proof}
Let $F=\{f_1,f_2,...,f_m\}$. For any nonempty subset $A$ of $X$ and $x\in X$, write $d(x,A)=inf\{d(x,y)\mid y\in A\}$. Let the two invariant periodic points are $p_1$ and $p_2$ with $\overline{orb(p_1,f_{1,\infty})}\cap \overline{orb(p_2,f_{1,\infty})}= \emptyset$. Then we can note
$$\delta=\frac{1}{4}inf\{d(x,y)\mid x\in orb(p_1,f_{1,\infty}), y\in orb(p_2,f_{1,\infty})\}>0.$$
Firstly, we will claim that for any $x\in X$, there exists an invariant periodic point $p_x\in \{p_1,p_2\}$ satisfying $d(x,orb(p_x,f_{1,\infty}))>\delta$.
To this end, consider the following three cases:

(1)if $x\in orb(p_1,f_{1,\infty})$, then $d(x,orb(p_2,f_{1,\infty}))\geq4\delta>\delta$.

(2)if $x\in orb(p_2,f_{1,\infty})$, then $d(x,orb(p_1,f_{1,\infty}))\geq4\delta>\delta$.

(3)if $x\notin orb(p_1,f_{1,\infty})\cup orb(p_2,f_{1,\infty})$, since $d(orb(p_1,f_{1,\infty}),orb(p_2,f_{1,\infty}))\geq 4\delta$, then using the triangle inequality, we have $d(x,orb(p_1,f_{1,\infty}))>\delta$ or $d(x,orb(p_2,f_{1,\infty}))>\delta$.

Nextly, We will see that $(X,f_{1,\infty})$ is sensitive with sensitivity constant $\gamma:=\frac{\delta}{4}$. For any non-empty open subsets $U$ of $X$, there exist $x\in U$ and $\varepsilon>0$ such that $B(x,\varepsilon)\subseteq U$.
For the $x$, there is a invariant periodic point $p$ such that for all $i,n\in\mathbb{N}$

\begin{equation}\label{shi3.02}
d(x,f_i^n(p))\geq\delta
\end{equation}

from the above discussion. Since periodic points are dense in $X$, there is a periodic point $q$ such that
$$d(x,q)<min\{\varepsilon,\gamma\}.$$
Let $N$ be period of $q$, that is $f_1^{jN}(q)=q$ for any $j\in\mathbb{N}$. Since the function $f_i$ is continuous for any $i=1,2,...,m$, there exists neighborhood $V$ of $p$ such that for any $k=0,1,2,...,N$, $i\in\mathbb{N}$ and $y\in V$,
\begin{equation}\label{shi3.01}
d(f_i^k(p),f_i^k(y))<\gamma.
\end{equation}

Since $(X,f_{1,\infty})$ is topologically transitive, there exists a positive integer $k$ such that $f_1^{k}(B(x,\varepsilon))\cap V\neq \emptyset$, that is existing $z\in B(x,\varepsilon)$ such that $f_1^{k}(z)\in V$.

Take a positive integer $j$ satisfying $Nj=k+l$, where $0\leq l<N$. Denote $F:=f_{jN}\circ f_{jN-1}\circ\cdot\cdot\cdot\circ f_{(j-1)N+l+1}$.

We obtain using inequality (\ref{shi3.01}) that

$$d(F(p),F(f_1^{k}(z)))\le \gamma.$$

This and (\ref{shi3.02}) yield that

\begin{equation}
\begin{aligned}
&d(f_1^{jN}(q),f_1^{jN}(z))\\
=&d(q,F(f_1^{k}(z))) \\
\ge & d(x,F(p))-d(F(p),F(f_1^{k}(z)))-d(x,q)\\
>& 4\gamma-\gamma-\gamma\\
=&2\gamma.
\end{aligned}
\end{equation}

Therefore, either $$d(f_1^{jN}(x),f_1^{jN}(q))>\gamma$$ or $$d(f_1^{jN}(x),f_1^{jN}(z))>\gamma.$$
The proof of theorem holds.

\end{proof}

\begin{remark}
The conditions of theorem above are weaker than Theorem 3.1 in \cite{q2}. In \cite{q2}, a point $x\in X$ is a periodic point if there is a positive integer $N$ such that for any natural number $k$, $f_1^k(x)=f_1^{N+k}(x)$. This definition of periodic point is stronger than the one in this paper.  Besides, in Theorem 3.1 \cite{q2} the condition 'without isolate points' can be dropped. Additionally, the proof of theorem above is simpler than the proof of Theorem 3.1 in \cite{q2}.
\end{remark}

\begin{theorem}
Let $(X,f_{1,\infty})$ be a non-autonomous discrete system without isolated points. Assume that $f_{1,\infty}$ converges uniformly to $f$. If

(1)$(X,f_{1,\infty})$ is topologically transitive,

(2)periodic points are dense in $X$,

then $(X,f_{1,\infty})$ is multi-sensitive.
\end{theorem}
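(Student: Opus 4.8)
The plan is to reduce to the limit map $f$ via the convergence hypothesis, extract a uniform separation constant, and then show that for a single constant $\delta$ the stretching-time sets $N_{f_{1,\infty}}(U,\delta)=\{n\in\mathbb{N}:\ \exists\,a,b\in U,\ d(f_1^n(a),f_1^n(b))>\delta\}$ are so large that arbitrary finite intersections remain nonempty.

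First I would fix the constant of sensitivity. Since $X$ has no isolated points it is nonempty and infinite, so the dense set of periodic points cannot be finite (a finite set is closed, hence would have to equal $X$); thus $f_{1,\infty}$ has infinitely many periodic points. By Lemma \ref{yinlijmaa1}(1), applicable because uniform convergence implies pointwise convergence, each of these is periodic for $f$, so $f$ has infinitely many periodic points and Lemma \ref{yinlijmaa1}(2) supplies an $\eta>0$ such that every $x\in X$ admits a periodic point $p$ of $f$ whose entire $f$-orbit stays at distance $\geq\eta$ from $x$. I would then set $\delta:=\eta/4$ as the proposed constant of multi-sensitivity.

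Second, for a single nonempty open set $U$ I would show that $N_{f_{1,\infty}}(U,\delta)$ is a large and, crucially, finite-intersection-stable subset of $\mathbb{N}$. Choose a periodic point $q\in U$ of $f_{1,\infty}$, say of period $N$, so that $f_1^{jN}(q)=q$ for all $j$ and, by Lemma \ref{yinlijmaa1}(1), $q$ is periodic for $f$. Applying the separation to $q$ gives a periodic orbit $\{f^k(p)\}$ of $f$ lying entirely $\geq\eta$-far from $q$; let $W$ be a small ball about $p$. Transitivity together with Lemma \ref{yinlijmaa2} yields infinitely many $n$ and points $z\in U$ with $f_1^n(z)\in W$. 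Here the uniform convergence does the decisive work: since applying a bounded block of the tail maps $f_{n+1},f_{n+2},\dots$ to a point is, for large $n$, arbitrarily close to applying the same number of iterates of $f$, I can push $f_1^n(z)$ forward to the next multiple $n'$ of $N$, and by shadowing the $f$-orbit of $p$ across a whole run of consecutive times, while keeping it within $\eta/4$ of the far orbit. At such a time $n'$ one has $f_1^{n'}(q)=q$ while $d(f_1^{n'}(z),q)\geq\eta-\eta/4>\delta$, so $\mathrm{diam}\,f_1^{n'}(U)>\delta$ and $n'\in N_{f_{1,\infty}}(U,\delta)$. Combining the runs produced by shadowing with a gap-bounding argument (of Moothathu type, using periodicity and uniform continuity to bound the spacing of such times) would show that $N_{f_{1,\infty}}(U,\delta)$ is thickly syndetic.

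Finally, because the family of thickly syndetic subsets of $\mathbb{N}$ is closed under finite intersections, for any $U_1,\dots,U_m$ the set $\bigcap_{i=1}^m N_{f_{1,\infty}}(U_i,\delta)$ is again thickly syndetic, hence nonempty, which is exactly multi-sensitivity with constant $\delta$. I expect this last synchronization to be the main obstacle: transitivity alone locates a separating time for one open set at a time, so the real content is upgrading each $N_{f_{1,\infty}}(U,\delta)$ from merely infinite — which already yields ordinary sensitivity, as in \cite{q14} — to a member of a finite-intersection-stable family. This is precisely where the uniform convergence must be exploited, namely to manufacture arbitrarily long runs by shadowing the limit dynamics, since in the purely topological (ADS) setting transitivity and dense periodic points give only syndetic, not multi-, sensitivity.
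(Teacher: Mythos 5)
Your opening moves coincide with the paper's: the constant $\delta=\eta/4$ extracted from Lemma \ref{yinlijmaa1} (after noting density plus no isolated points gives infinitely many periodic points), a periodic point $q\in U$ of period $N$, a far $f$-periodic orbit, transitivity times from Lemma \ref{yinlijmaa2}, and the uniform-convergence block estimate that pushes $f_1^n(z)$ to the next multiple $n'$ of $N$; this correctly yields infinitely many separating times $n'\in N_{f_{1,\infty}}(U,\delta)$ for each single $U$. The divergence is at the synchronization step, and there your proposal has a genuine gap. You assert that the same mechanism makes each $N_{f_{1,\infty}}(U,\delta)$ thickly syndetic, and then invoke the filter property of thickly syndetic sets. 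But the mechanism you describe cannot produce thickness, let alone thick syndeticity. The separation estimate $d(f_1^{n'}(z),f_1^{n'}(q))\geq \eta-\eta/4-\eta/4>\delta$ is available only at times $n'$ that are multiples of $N$, because only at such times is $f_1^{n'}(q)=q$, and Lemma \ref{yinlijmaa1}(2) separates the single point $q$ (equivalently $x$) from the $f$-orbit of $p$ --- it says nothing about the orbit of $q$. At an intermediate time $n'+s$ with $0<s<N$, the trajectory of $q$ sits (approximately) at $f^s(q)$, which may be arbitrarily close to the $f$-orbit of $p$, so no separation can be deduced there even though $f_1^{n'+s}(z)$ still shadows that orbit. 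Hence the set of separating times your argument actually produces is (up to the approximation) contained in $N\mathbb{Z}$, which for $N\geq 2$ contains no two consecutive integers: it is never thick, hence never thickly syndetic. Producing genuine runs requires pinning \emph{both} trajectories near two \emph{disjoint} invariant sets for a whole block of times; that is exactly what the paper's separate theorem on thickly syndetic sensitivity does, and it needs hypotheses absent here (compactness, finite generation, block equicontinuity, nonminimality). Even the weaker claim that $N_{f_{1,\infty}}(U,\delta)$ is syndetic is out of reach of your sketch: it would require the hitting sets $N_{f_{1,\infty}}(U,W)$ to be syndetic, whereas Lemma \ref{yinlijmaa2} gives only that they are infinite, and the usual ADS recurrence argument does not transfer, since after an NDS-periodic point returns to itself the maps subsequently applied are no longer $f_1,f_2,\dots$.

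The paper synchronizes by a different and more elementary device: it runs the single-set argument for $U_1,\dots,U_m$ simultaneously with the common period $N=N_1N_2\cdots N_m$ (each $q_i$ is also $N$-periodic), so that the separating times for every $U_i$ are multiples of one and the same $N$. Your instinct that the synchronization is "the real content" is sound, and it is worth saying that the paper's own final line is terse on precisely this point: asserting that a single multiple $jN$ works for all $i$ at once still requires, for some common $j$, a transitivity time for each $U_i$ in the same window $[(j-1)N,jN)$, which infiniteness of the individual hitting sets alone does not guarantee. So a complete proof must still address that alignment; but your proposed replacement --- upgrading to thick syndeticity --- is not a repair, because the stated mechanism demonstrably cannot deliver it.
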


\begin{proof}
 For any $m\in \mathbb{N}$ and any non-empty open subsets $U_1,U_2,...,U_m$ of $X$, there exist $x_i\in U_i$ and $\varepsilon$ such that $B(x_i,\varepsilon)\subseteq U_i$ $(i=1,2,...,m)$.
For the above $x_i$, by lemma \ref{yinlijmaa1}, there is a periodic point $p_i$ such that for all $n\in\mathbb{N}$,
\begin{equation}\label{shi3.0}
d(x_i,f^n(p_i))\geq\eta.
\end{equation}
Next we will claim that $(X,f_{1,\infty})$ is multi-sensitive with sensitivity constant $\delta:=\frac{\eta}{4}$.
Since periodic points are dense in $X$, there is a periodic point $q_i$ such that
$$d(x_i,q_i)<min\{\varepsilon,\frac{\eta}{4}\}.$$
Let $N_i$ be period of $q_i$, that is, $f_1^{jN_i}(q_i)=q_i$ for any $j\in\mathbb{N}$. Since the functions $f^j$ are continuous for all $j=1,2,...,N_i$, there exists some neighborhood $V_i$ of $p_i$ such that for any $j=0,1,2,...,N_i$ and any $y\in V_i$,
\begin{equation}\label{shi3.1}
d(f^j(p),f^j(y))<\frac{\delta}{2}.
\end{equation}
Since $(X,f_{1,\infty})$ converges uniformly to $f$, there exists $j_0\in\mathbb{N}$ such that for any $j\geq j_0, 0\leq l_i<N_i,v\in V_i$, we have
$$d(f_{jN_i}\circ f_{jN_i-1}\circ\cdot\cdot\cdot\circ f_{(j-1)N_i+l_i+1}(v),f^{N_i-l}(v))<\frac{\delta}{2}.$$
Since $(X,f_{1,\infty})$ is topologically transitive and  has no isolated point, there exists an infinite set of positive integers $m_i$ such that $f_1^{m_i}(B(x_i,\varepsilon))\cap V_i\neq \emptyset$ by lemma \ref{yinlijmaa2}. Hence we take some $k_i$ satisfying $k_i=(j-1)N_i+l_i$ and $f_1^{k_i}(B(x_i,\varepsilon))\cap V_i\neq \emptyset$, where $0\leq l_i<N_i$ and $j>j_0$. Then take $z_i\in B(x_i,\varepsilon)$ such that $f_1^{k_i}(z_i)\in V$.

Denote $F_i:=f_{jN_i}\circ f_{jN_i-1}\circ\cdot\cdot\cdot\circ f_{(j-1)N_i+l_i+1}$. By hypothesis we can get that
$$d(F_i(f_1^{k_i}(z_i)),f^{N_i-l_i}(f_1^{k_i}(z_i)))<\frac{\delta}{2}.$$

By the triangle inequality we obtain using inequality (\ref{shi3.1}) that

\begin{equation}\label{shi3.2}
\begin{aligned}
&d(f^{N_i-l_i}(p),F_i(f_1^{k_i}(z_i)))\\
\le &d(F_i(f_1^{k_i}(z_i)),f^{N_i-l_i}(f_1^{k_i}(z_i)))
+d(f^{N-i}(p),f^{N_i-l_i}(f_1^{k_i}(z_i)))\\
\le & \frac{\delta}{2}+\frac{\delta}{2}\\
=&\delta.
\end{aligned}
\end{equation}
(\ref{shi3.2}) and (\ref{shi3.0}) yield that

\begin{equation*}
\begin{aligned}
&d(f_1^{jN_i}(q_i),f_1^{jN_i}(z_i))\\
=&d(q_i,F_i(f_1^{k_i}(z_i))) \\
\ge & d(x_i,f^{N_i-l_i}(p))-d(f^{N_i-l_i}(p),F_i(f_1^{k_i}(z_i)))-d(x_i,q_i)\\
>& 4\delta-\delta-\delta\\
=&2\delta.
\end{aligned}
\end{equation*}

Therefore, either $$d(f_1^{jN_i}(x_i),f_1^{jN_i}(q_i))>\delta$$ or $$d(f_1^{jN_i}(x_i),f_1^{jN_i}(z_i))>\delta.$$
 Take $N=N_1N_2...N_m$. Then for any $i=1,2,...,m$, either $$d(f_1^{jN}(x_i),f_1^{jN}(q_i))>\delta$$ or $$d(f_1^{jN_i}(x_i),f_1^{jN_i}(z_i))>\delta.$$ The proof of theorem holds.

\end{proof}

\begin{remark}
With the similar argument, we can also obtain that $(X,f_{1,\infty})$ is syndetically sensitive under the same assumptions above.
\end{remark}

\begin{theorem}\label{dengdumingan}
Let $(X,f_{1,\infty})$ be a compact and finitely generated non-autonomous discrete system. If
$f_{1,\infty}$ is transitive and commutative, then $f_{1,\infty}$ is either sensitive or almost equicontinuous.

\end{theorem}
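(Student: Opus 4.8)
The plan is to adapt the Auslander--Yorke dichotomy from the autonomous setting: assuming $f_{1,\infty}$ is \emph{not} sensitive, I would prove that it is almost equicontinuous, whence the dichotomy follows. First I record that the set of equicontinuity points is a $G_\delta$: for each $m\in\mathbb{N}$ set
$$E_m=\{x\in X:\ \exists\,\delta>0 \text{ with } d(f_1^n(u),f_1^n(v))<1/m \text{ for all } u,v\in B(x,\delta),\ n\in\mathbb{Z_+}\},$$
which is open, and $Eq(f_{1,\infty})=\bigcap_m E_m$. Since $X$ is compact it is a complete, second countable Baire space, so by Lemma \ref{yinlicd} there is a transitive point, and the transitive points form a dense $G_\delta$. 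By the Baire category theorem it then suffices to prove that each $E_m$ is dense.

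Second, I would extract stable sets from the failure of sensitivity. Negating the definition of sensitivity, for every $\eta>0$ there is a nonempty open set $U$ with $\mathrm{diam}(f_1^n(U))\le\eta$ for all $n\in\mathbb{Z_+}$; call such a $U$ an $\eta$-\emph{stable} set. The role of commutativity enters through the identity $f_1^n\circ g=g\circ f_1^n$ for every generator $g\in F$, and hence $f_1^n\circ h=h\circ f_1^n$ for every finite composition $h$ of generators; this is immediate once all members of $f_{1,\infty}$ commute. Consequently, if $U$ is $\eta$-stable and $h$ is any composition of generators, then $f_1^n(h(U))=h(f_1^n(U))$ has diameter at most $\omega_h(\eta)$, where $\omega_h$ is a modulus of continuity of $h$; and since $f_{1,\infty}$ is finitely generated, the finitely many compositions of a \emph{bounded} number of generators admit a single common modulus $\omega$. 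This uniformity over bounded-length compositions is exactly what finite generation is meant to buy.

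Third, I would use transitivity to transport stability into an arbitrary nonempty open set $W$. Fix $\eta$ small (to be calibrated against $1/m$) and an $\eta$-stable set $U_0$. By transitivity there is $n_0$ with $f_1^{n_0}(W)\cap U_0\ne\emptyset$, so $W':=W\cap (f_1^{n_0})^{-1}(U_0)$ is a nonempty open subset of $W$ with $f_1^{n_0}(W')\subseteq U_0$. For the \emph{head} $n<n_0$, continuity of the finitely many maps $f_1^0,\dots,f_1^{n_0-1}$ controls $\mathrm{diam}(f_1^n(W'))$ once $W'$ is shrunk. For the \emph{tail} $n=n_0+j$ I would write $f_1^{\,n_0+j}=f_{n_0+1}^{\,j}\circ f_1^{\,n_0}$, so that $f_1^{\,n_0+j}(W')\subseteq f_{n_0+1}^{\,j}(U_0)$; everything then reduces to showing that the shifted images $f_{n_0+1}^{\,j}(U_0)$ of the stable set stay small, uniformly in $j$. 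This is the step where commutativity together with finite generation is meant to be decisive, propagating stability recorded along the time-one orbits to stability along the shifted orbits. Granting this, each $W'\subseteq E_m$ for a suitable calibration of $\eta$, so every $E_m$ is dense, and Baire yields that $Eq(f_{1,\infty})$ is a dense $G_\delta$, i.e. $f_{1,\infty}$ is almost equicontinuous.

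The main obstacle is precisely this last reconciliation. In a non-autonomous system the stability/equicontinuity information is recorded along the time-one compositions $f_1^n$, whereas the dynamics emanating from the pulled-back set $W'$ naturally produces the shifted blocks $f_{n_0+1}^{\,j}$ (equivalently $f_{j+1}^{\,n_0}$), and these cannot be identified with any single $f_1^k$. Because the length $j$ of the shifted block is unbounded, the finite-family modulus $\omega$ does not apply to $f_{n_0+1}^{\,j}$ directly, and one must instead use commutativity to trade the shifted block against genuine time-one data up to a \emph{bounded} correction to which $\omega$ does apply. Carrying out this trade, and in particular making the resulting bound on $\mathrm{diam}(f_{n_0+1}^{\,j}(U_0))$ uniform in both $j$ and the transition time $n_0$, is the delicate heart of the proof; I expect the bookkeeping of these moduli to be where the hypotheses of commutativity and finite generation are genuinely consumed.
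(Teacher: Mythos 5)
There is a genuine gap, and it sits exactly where you placed your bet: the uniform smallness of $\mathrm{diam}\bigl(f_{n_0+1}^{\,j}(U_0)\bigr)$ is never proved, only hoped for, and the proposed ``trade against time-one data up to a bounded correction'' cannot deliver it. By commutativity, what stability of $U_0$ actually tells you about shifted blocks is $f_{n_0+1}^{\,j}\bigl(f_1^{\,n_0}(U_0)\bigr)=f_1^{\,n_0+j}(U_0)$, i.e.\ the shifted block is controlled only on the image $f_1^{\,n_0}(U_0)$, not on $U_0$ itself, and $f_1^{\,n_0}$ is not invertible, so this cannot be transferred back. If instead you attempt the multiset exchange, commutativity gives $f_{n_0+1}^{\,j}=B\circ C$ and $f_1^{\,j}=A\circ C$, where $A=f_1^{\,n_0}$ and $B=f_{j+1}^{\,n_0}$ are words of bounded length $n_0$ in the generators and $C=f_{n_0+1}^{\,j-n_0}$; but knowing that $A(C(U_0))=f_1^{\,j}(U_0)$ is small does not make $C(U_0)$ small (the word $A$ may contract), so no modulus of continuity applied to the bounded word $B$ can bound $B(C(U_0))=f_{n_0+1}^{\,j}(U_0)$. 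In short, the step you call ``bookkeeping of moduli'' is not bookkeeping: with a head/tail decomposition through a single transition time $n_0$, the estimate you need structurally cannot be closed, and this is where your proof stops.

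The paper's argument shares your setup --- the sets $E_m$ (its $G_k$), their openness, and $Eq(f_{1,\infty})=\bigcap_k G_k$ --- but replaces the forward transport of a stable open set by a \emph{backward invariance} statement: $f_i^{-1}(G_k)\subseteq G_k$ for the generator maps, hence $f_1^{-n}(G_k)\subseteq G_k$. There commutativity is consumed only in moving a \emph{single} generator across a time-one block, from $d(f_1^n(f_i(u)),f_1^n(f_i(v)))<1/k$ to $d(f_i(f_1^n(u)),f_i(f_1^n(v)))<1/k$, while compactness and finite generation provide one uniform continuity constant for all generators and guarantee that the map $f_{n+1}$ occupying position $n+1$ is always one of the finitely many generators, so these inequalities recombine into $d(f_1^{n+1}(u),f_1^{n+1}(v))<1/k$ for every $n$. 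With inverse invariance in hand, transitivity is applied directly to the pair $\bigl(X\setminus\overline{G_k},\,G_k\bigr)$: if $G_k$ were nonempty and not dense, some $f_1^n$ would map a point of $X\setminus\overline{G_k}$ into $G_k$, forcing that point into $G_k$, a contradiction; hence each $G_k$ is either empty (which yields sensitivity with constant $\tfrac{1}{3k}$) or dense (and if this holds for every $k$, then $Eq(f_{1,\infty})$ is residual and the system is almost equicontinuous). The idea your proposal is missing is precisely this backward invariance of the equicontinuity-type sets under the individual generators; shifted blocks $f_{n_0+1}^{\,j}$, the objects that defeat your approach, never appear in the paper's proof at all.
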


\begin{proof}
Denote $F=\{f_1,f_2,...,f_m\}$ and
$$G_k=\{x\in X \mid \exists\delta>0 \text{  such that  }\forall n\in \mathbb{Z_+},\forall y,z\in B(x,\delta), d(f_1^n(y),f_1^n(z))<\frac{1}{k}\}.$$ The whole proof is divided into four steps.

Step 1. We will claim that $G_k$ is inversely invariant. Let $x\in f_i^{-1}(G_k)$, where $i=1,2,...,m$, then $f_i(x)\in G_k$. Let $\delta<\frac{1}{2k}$ satisfies the definition of $G_k$. Since $f_i$ is continuous and $X$ is compact, there exists $\eta<\delta$ such that for any $u,v\in X$ with $d(u,v)<\eta$, $d(f_i(u),f_i(v))<\delta$, where $i=1,2,...,m$. So for every $u,v\in B(x,\eta)$, we have $f_i(u),f_i(v)\in B(f_i(x),\delta)$. Therefore,
\begin{equation}\label{dengdu}
d(f_1^n\circ f_i(u),f_1^n\circ f_i(v))<\frac{1}{k}, \forall n\in \mathbb{Z_+},i=1,2,...,m.
\end{equation}
 Since $f_{1,\infty}$ is commutative, (\ref{dengdu}) implies that
$$d(f_i\circ f_1^n(u),f_i\circ f_1^n(v))<\frac{1}{k}, \forall n\in \mathbb{Z_+},i=1,2,...,m.$$
Further,
\begin{equation}\label{dengdu1}
d(f_1^{n+1}(u),f_1^{n+1}(v))<\frac{1}{k}, \forall n\in \mathbb{Z_+}.
\end{equation}
In addition, $d(u,v)<2\delta<\frac{1}{k}$. This together with (\ref{dengdu1}) can be verified that
$$d(f_1^{n}(u),f_1^{n}(v))<\frac{1}{k}, \forall n\in \mathbb{Z_+}.$$
Therefore, $x\in G_k$. Hence, $G_k$ is inversely invariant.

Step 2. We will see that $G_k$ is open.
Let $x\in G_k$ and let $\delta<\frac{1}{2k}$ satisfies the condition in the definition of $G_k$ for $x$. Then $B(x, \frac{\delta}{2})\subseteq G_k$. Indeed, for any $y\in B(x, \frac{\delta}{2})$ and any $n\in \mathbb{Z_+}$, if $z,w\in B(y,\frac{\delta}{2})$, then $z,w\in B(x, \delta)$. Therefore, $d(f_1^{n}(z),f_1^{n}(w))<\frac{1}{k}$. That is $B(x, \frac{\delta}{2})\subseteq G_k$. This shows that $G_k$ is open.

Step 3. We will prove that $Eq(f_{1,\infty})=\cap_{k=1}^\infty G_k$.
Let $x\in Eq(f_{1,\infty})$ and let $k\geq1$. Then there exists $\delta$ such that  $$d(x,y)<\delta \Rightarrow d(f_1^{n}(x),f_1^{n}(y))<\frac{1}{2k}, \forall n\in \mathbb{Z_+}.$$
So for any $y,z\in B(x, \delta)$, $d(f_1^{n}(y),f_1^{n}(z))<\frac{1}{k}, \forall n\in \mathbb{Z_+}$ by triangle inequality. Thus, $x\in G_k, \forall k\in \mathbb{Z_+}$. Therefore $Eq(f_{1,\infty})\subseteq \cap_{k=1}^\infty G_k$. Conversely, let $x\in \cap_{k=1}^\infty G_k$. For any $\varepsilon>0$, there exists $k\geq1$ such that $\frac{1}{k}<\varepsilon$. Since $x\in \cap_{k=1}^\infty G_k$,  there exists $\delta$ such that $\forall n\in \mathbb{Z_+},\forall y,z\in B(x,\delta), d(f_1^n(y),f_1^n(z))<\frac{1}{k}<\varepsilon$. Take $z=x$, then $d(f_1^n(y),f_1^n(x))<\varepsilon$. Hence $x\in Eq(f_{1,\infty})$, and further $\cap_{k=1}^\infty G_k\subseteq Eq(f_{1,\infty})$. Thus, $Eq(f_{1,\infty})=\cap_{k=1}^\infty G_k$.

Step 4. Let $(X,f_{1,\infty})$ be a transitive non-autonomous discrete system and $k\in \mathbb{N}$. Assume that $G_k$ is nonempty and nondense. Then $U:=X\backslash {\overline{G_k}}$ is open and nonempty, so there exist $n\in \mathbb{N}$ such that $U\cap f_1^{-n}(G_k)\neq \emptyset$. But $U\cap f_1^{-n}(G_k)\subseteq U\cap G_k=\emptyset$. This is a contradiction. Hence,  $G_k$ is either empty or dense. If all $G_k$ are nonempty, then they are dense. Thus, $Eq(f_{1,\infty})=\cap_{k=1}^\infty G_k$ is a residual set, that is $\overline{Eq(f_{1,\infty})}=X$, so $f_{1,\infty}$ is almost equicontinuous. If $G_k=\emptyset$ for some $k\in \mathbb{N}$, then for any $x\in X$ and any $\delta>0$, there exist $y,z\in B(x, \delta)$ and $n\in \mathbb{Z_+}$ such that $d(f_1^n(y),f_1^n(z))\geq\frac{1}{k}$. It follows that either $d(f_1^n(x),f_1^n(y))>\frac{1}{3k}$ or $d(f_1^n(x),f_1^n(z))>\frac{1}{3k}$. This implies that  $f_{1,\infty}$ is sensitive.

The entire proof is complete.
\end{proof}

\begin{corollary}
Let $(X,f_{1,\infty})$ be a finitely generated non-autonomous discrete system. If
$f_{1,\infty}$ is minimal(M2) and commutative, then $f_{1,\infty}$ is either sensitive or equicontinuous.
\end{corollary}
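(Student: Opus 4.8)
The plan is to deduce the corollary from Theorem~\ref{dengdumingan} and then sharpen the conclusion \emph{almost equicontinuous} into \emph{equicontinuous} by exploiting minimality. First I would check that minimality in the sense (M2) supplies topological transitivity, so that the theorem applies. Indeed, (M2) says every point of $X$ has a dense orbit; so given nonempty open sets $U,V\subseteq X$, I would pick any $y\in U$ and use density of $orb(y,f_{1,\infty})$ to find $n\in\mathbb{N}$ with $f_1^n(y)\in V$, whence $f_1^n(y)\in f_1^n(U)\cap V$ and $f_{1,\infty}$ is topologically transitive. Since $f_{1,\infty}$ is also commutative, Theorem~\ref{dengdumingan} gives the dichotomy that $f_{1,\infty}$ is either sensitive or almost equicontinuous. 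In the sensitive case there is nothing more to prove, so from now on I would assume $f_{1,\infty}$ is almost equicontinuous, i.e. $Eq(f_{1,\infty})$ is dense in $X$, and aim to show $Eq(f_{1,\infty})=X$.

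Next I would recall the sets $G_k$ constructed in the proof of Theorem~\ref{dengdumingan} together with the three facts established there: each $G_k$ is open (Step 2), each $G_k$ is inversely invariant (Step 1), and $Eq(f_{1,\infty})=\bigcap_{k=1}^\infty G_k$ (Step 3). Because $Eq(f_{1,\infty})\subseteq G_k$ and $Eq(f_{1,\infty})$ is dense, every $G_k$ is nonempty. The heart of the argument is then to promote each nonempty $G_k$ to all of $X$. Since $G_k$ is inversely invariant, $f_i^{-1}(G_k)\subseteq G_k$ for every generator $f_i$, which is equivalent to $f_i(C_k)\subseteq C_k$ for the complement $C_k:=X\setminus G_k$; and because $G_k$ is open, $C_k$ is closed and forward invariant under each generator. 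If $C_k\neq\emptyset$, I would choose $x\in C_k$; forward invariance then gives $f_1^n(x)\in C_k$ for all $n$, hence $\overline{orb(x,f_{1,\infty})}\subseteq C_k$. But $x$ is transitive by (M2), so $\overline{orb(x,f_{1,\infty})}=X$, forcing $C_k=X$, i.e. $G_k=\emptyset$, a contradiction. Therefore $C_k=\emptyset$ and $G_k=X$ for every $k$, so $Eq(f_{1,\infty})=\bigcap_{k=1}^\infty G_k=X$ and $f_{1,\infty}$ is equicontinuous.

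The main obstacle I anticipate is precisely this passage from \emph{almost} equicontinuous to equicontinuous: the whole step rests on reinterpreting inverse invariance of $G_k$ as forward invariance of the closed set $C_k$, and then annihilating $C_k$ by means of a single dense orbit. A secondary point worth flagging is the applicability of Theorem~\ref{dengdumingan}: the corollary omits the compactness hypothesis, whereas that theorem (specifically the uniform-continuity argument making $G_k$ inversely invariant) is stated for compact $X$. I would therefore either take the ambient space to be compact, as is standard for minimal systems, or re-derive inverse invariance of the $G_k$ directly, invoking uniform continuity of the finitely many generators $f_1,\dots,f_m$; once inverse invariance and openness of $G_k$ are in hand, the minimality argument above closes the proof.
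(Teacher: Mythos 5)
Your proposal is correct and, at its core, it runs along the same lines as the paper's own proof: both reuse the machinery from the proof of Theorem~\ref{dengdumingan} --- the sets $G_k$, their openness, their inverse invariance, and the identity $Eq(f_{1,\infty})=\bigcap_{k=1}^\infty G_k$ --- and then use minimality (M2) to force equicontinuity in the non-sensitive case. The execution differs, though, and yours is the tighter of the two. The paper fixes $x\in X$, uses density of $orb(x,f_{1,\infty})$ to find orbit points inside the preimages $f_1^{-n}(G_k)$, and pulls back via inverse invariance to conclude $orb(x,f_{1,\infty})\subseteq G_k$; this needs each open set $f_1^{-n}(G_k)$ to be \emph{nonempty}, which is not justified without surjectivity of the maps, and its closing line ($X=\overline{orb(x,f_{1,\infty})}\subseteq\overline{Eq(f_{1,\infty})}\subseteq X$) literally yields only density of $Eq(f_{1,\infty})$, i.e.\ almost equicontinuity; full equicontinuity follows only because $x$ itself belongs to its orbit and $x$ was arbitrary, a point left implicit. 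Your dual formulation --- the complement $C_k=X\setminus G_k$ is closed and forward invariant under every generator, so if it were nonempty it would contain the closure of a dense orbit and hence all of $X$, contradicting $G_k\supseteq Eq(f_{1,\infty})\neq\emptyset$ --- sidesteps both issues and delivers $G_k=X$ for every $k$, hence $Eq(f_{1,\infty})=X$, outright. Your closing caveat is also well taken: the corollary omits compactness, but Step~1 of Theorem~\ref{dengdumingan} (inverse invariance of $G_k$) invokes uniform continuity of the finitely many generators, which the paper obtains from compactness; both your argument and the paper's inherit this dependence, so compactness of $X$ (or uniform continuity of the generators) should indeed be assumed.
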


\begin{proof}
Since $f_{1,\infty}$ is minimal, then for any $x\in X$, $\overline{orb(x)}=X$. If $f_{1,\infty}$ is nonsensitive, then $Eq(f_{1,\infty})\neq \emptyset$. Therefore $G_k\neq \emptyset,\forall k\in \mathbb{N}$ by the step 3 in the Theorem \ref{dengdumingan} above. Hence, for any $k\in \mathbb{N}$ and any $n\in \mathbb{Z_+}$, $orb(x,f_{1,\infty})\cap f_1^{-n}(G_k) \neq \emptyset$. So, there exists $p_k\in \mathbb{Z_+}$ such that $f_1^{p_k}(x)\in f_1^{-n}(G_k)$, that is, $f_1^{n}(x)\in f_1^{-p_k}(G_k)\subseteq G_k$. Therefore, $orb(x,f_{1,\infty})\subseteq G_k, \forall k\in \mathbb{N}$. Again, by the step 3 in the Theorem \ref{dengdumingan} above, one can get that $orb(x,f_{1,\infty})\subseteq Eq(f_{1,\infty})$. Thus, $X=Eq(f_{1,\infty})$ by the fact that $X=\overline{orb(x,f_{1,\infty})}\subseteq \overline{Eq(f_{1,\infty})}\subseteq X$, and further, $f_{1,\infty}$ is equicontinuous.

\end{proof}

\begin{theorem}
Let $(X,f_{1,\infty})$ be a nontrivial non-autonomous discrete system. If
$f_{1,\infty}$ is mixing, then $f_{1,\infty}$ is cofinitely sensitive.
\end{theorem}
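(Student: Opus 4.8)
The plan is to manufacture a single sensitivity constant $\delta$ that works simultaneously for \emph{every} nonempty open set, exploiting that mixing lets us spread the images of any open set over two far-apart regions for all sufficiently large times. This uniformity is precisely what cofinite sensitivity requires, so the constant must be fixed before any particular open set is introduced.

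First, since $(X,f_{1,\infty})$ is nontrivial, I would choose two distinct points $a,b\in X$ and set $r=d(a,b)>0$. I then fix the sensitivity constant $\delta:=r/4$ once and for all. Next, let $U\subseteq X$ be an arbitrary nonempty open set. Applying the mixing hypothesis to the pairs $(U,B(a,r/4))$ and $(U,B(b,r/4))$ produces $N_1,N_2\in\mathbb{N}$ such that $f_1^n(U)\cap B(a,r/4)\neq\emptyset$ for all $n\geq N_1$ and $f_1^n(U)\cap B(b,r/4)\neq\emptyset$ for all $n\geq N_2$. Put $N=\max\{N_1,N_2\}$.

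Then for every $n\geq N$ I would extract points $x,y\in U$ with $f_1^n(x)\in B(a,r/4)$ and $f_1^n(y)\in B(b,r/4)$, and estimate via the triangle inequality
$$d(f_1^n(x),f_1^n(y))\geq d(a,b)-d(a,f_1^n(x))-d(b,f_1^n(y))>r-\tfrac{r}{4}-\tfrac{r}{4}=\tfrac{r}{2}>\delta.$$
Hence every integer $n\geq N$ lies in $N_{f_{1,\infty}}(U,\delta)$, so this set contains $[N,\infty)\cap\mathbb{N}$ and is therefore cofinite in the sense of the definition. Because $\delta$ was chosen independently of $U$, the same constant serves all open sets, and $f_{1,\infty}$ is cofinitely sensitive.

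The argument is essentially free of genuine obstacles; the two points needing care are interpreting the definition of $N_{f_{1,\infty}}(U,\delta)$ with its intended meaning $d(f_1^n(x),f_1^n(y))>\delta$ (the displayed formula carries an evident typo), and ensuring that $\delta$ is pinned down from the fixed pair $a,b$ \emph{before} $U$ enters, so that a single constant works uniformly over all open sets — which is the whole content of sensitivity rather than a merely pointwise separation. Everything else reduces to the standard triangle-inequality separation estimate, and the cofinite conclusion follows directly from the ``for all $n\geq N$'' clause in the definition of mixing.
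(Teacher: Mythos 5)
Your proof is correct, and it rests on the same core mechanism as the paper's: apply mixing to two pairs of open sets so that, for all sufficiently large $n$, the image $f_1^n(U)$ meets two regions that are far apart, and then separate points of $U$ by the triangle inequality. The difference lies in the choice of target sets. The paper first observes that, since $X$ is nontrivial, there is $\delta>0$ such that every nonempty open $U$ with $\mathrm{diam}(U)<\delta$ admits a nonempty open $V$ with $d(U,V)>\delta$, and then applies mixing to the pairs $(U,U)$ and $(U,V)$; its targets therefore depend on $U$, and strictly speaking that argument only treats open sets of small diameter, so one must additionally invoke the monotonicity $N_{f_{1,\infty}}(U',\delta)\subseteq N_{f_{1,\infty}}(U,\delta)$ for open $U'\subseteq U$, a step the paper leaves implicit. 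You instead fix two points $a,b$ with $r=d(a,b)>0$ once and for all and use the fixed targets $B(a,r/4)$ and $B(b,r/4)$; this removes the diameter restriction, makes the uniformity of $\delta=r/4$ over all open sets transparent, and dispenses with the paper's preliminary geometric claim (which in any case would be verified by exactly the two-point construction you start from). Your reading of the definition of $N_{f_{1,\infty}}(U,\delta)$, with $d(f_1^n(x),f_1^n(y))>\delta$ as the intended condition, is also the right one. Both proofs are equally elementary; yours is slightly cleaner and fully self-contained.
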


\begin{proof}
As $X$ is not reduced to a single point, there exists a $\delta>0$ such that for any nonempty open sets $U\in X$ with $diam(U)<\delta$, there exists a nonempty open sets $V\in X$ satisfying $d(U,V)>\delta$. Since $f_{1,\infty}$ is mixing, there is $N\in \mathbb{N}$ such that for any $n\geq N$, $f_1^n(U)\cap U \neq \emptyset$ and $f_1^n(U)\cap V \neq \emptyset$. That is, there exist $x,y\in U$ such that for any $n\geq N$, $d(f_1^n(x),f_1^n(y))>\delta$. Therefore, $f_{1,\infty}$ is cofinitely sensitive.
\end{proof}

\begin{theorem}
Let $(X,f_{1,\infty})$ be a compact non-autonomous discrete system. Then the following conditions are equivalent:

(1)$f_{1,\infty}$ is minimal(M2),

(2)for any nonempty open set $U\subseteq X$, there exists a $n\in \mathbb{N}$ such that $X=\cap_{k=0}^n f_1^{-k}(U)$.
\end{theorem}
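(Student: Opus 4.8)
The plan is to prove both implications directly from the definition of minimality (M2)---that every orbit $orb(x,f_{1,\infty})$ is dense in $X$---invoking compactness only in the forward direction. (I read the right-hand side of~(2) as the \emph{union} $X=\bigcup_{k=0}^n f_1^{-k}(U)$: the displayed intersection would force $U=X$ and make the equivalence vacuous, whereas the union is exactly the classical open-cover characterization of minimality. Throughout, $f_1^{-k}(U)$ denotes the full preimage $\{x\in X:f_1^k(x)\in U\}$, which is defined whether or not the maps $f_n$ are invertible.)

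For $(1)\Rightarrow(2)$, fix a nonempty open set $U$. Each $f_1^k$ is a composition of continuous maps, so every preimage $f_1^{-k}(U)$ is open. Minimality says that the orbit of each $x\in X$ is dense and hence meets $U$, so there is some $k\ge 0$ with $f_1^k(x)\in U$, i.e.\ $x\in f_1^{-k}(U)$; thus $\{f_1^{-k}(U)\}_{k\ge 0}$ is an open cover of $X$. The key step is to use compactness of $X$ to extract a finite subcover $X=\bigcup_{j=1}^m f_1^{-k_j}(U)$; taking $n=\max_j k_j$ and noting that the union only grows when the intermediate indices are adjoined gives $X=\bigcup_{k=0}^n f_1^{-k}(U)$.

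For $(2)\Rightarrow(1)$, fix any $x\in X$; I will show that $orb(x,f_{1,\infty})$ is dense, which is precisely transitivity of $x$, and since $x$ is arbitrary this yields (M2). Let $U$ be any nonempty open set. Applying~(2) produces $n$ with $X=\bigcup_{k=0}^n f_1^{-k}(U)$; since $x$ lies in this union, $f_1^k(x)\in U$ for some $0\le k\le n$, so $orb(x,f_{1,\infty})\cap U\neq\emptyset$. As $U$ was arbitrary, the orbit of $x$ is dense.

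The only genuinely topological ingredient is the compactness argument in the forward direction; the converse is a pure unwinding of definitions. Accordingly I expect the main (mild) obstacle to be bookkeeping: confirming that $f_1^{-k}(U)$ is the continuous preimage---so that openness and the finite-subcover step go through with no invertibility hypothesis on the $f_n$---and that passing from the finite index set $\{k_j\}$ to the full range $\{0,1,\dots,n\}$ preserves the covering equality.
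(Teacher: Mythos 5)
Your proof is correct and takes essentially the same route as the paper's: the forward direction via the open cover $\{f_1^{-k}(U)\}_{k\ge 0}$ of $X$ plus compactness to extract a finite subcover, and the converse by directly unwinding that every point lands in $U$ under some $f_1^k$. Your diagnosis that the displayed $\cap$ must be read as $\cup$ is also vindicated: the paper's own proof argues exactly the union logic (each $x$ lies in \emph{some} $f_1^{-k}(U)$, then compactness yields a finite $n$), while carrying the same $\cap$-for-$\cup$ typo in its formulas.
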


\begin{proof}

$(1)\Rightarrow(2)$. Since $f_{1,\infty}$ is minimal, then for any $x\in X$, $\overline{orb(x)}=X$. Therefore, for any nonempty open set $U\subseteq X$, there exists a $k\in \mathbb{Z_+}$ such that $f_1^k(x)\in U$, that is, $x\in f_1^{-k}(U)$. Hence, $X=\cap_{k=0}^\infty f_1^{-k}(U)$. Since $X$ is compact, there a $n\in \mathbb{N}$ such that $X=\cap_{k=0}^n f_1^{-k}(U)$.

$(2)\Rightarrow(1)$. Let $x\in X$ and $U\subseteq X$ be a nonempty open set. Since $X=\cap_{k=0}^n f_1^{-k}(U)$, where $n\in \mathbb{N}$, there exists a $r\in \mathbb{N}$ such that $f_1^r(x)\in U$, which implies that $\overline{orb(x)}=X$.

\end{proof}

For autonomous system, it is known that if $f$ is transitive, then it is surjective  \cite{q27}. But the result does not hold for non-autonomous system.

\begin{proposition}
There exists a non-autonomous discrete system $(X,f_{1,\infty})$ which is transitive but there are infinitely many $f_i\in f_{1,\infty}$ which are not surjective.
\end{proposition}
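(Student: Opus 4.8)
The plan is to settle this existence statement by writing down one explicit example rather than arguing abstractly. The whole point is that the autonomous implication ``transitive $\Rightarrow$ surjective'' rests on iterating a \emph{single} map, so that $f(X)$ is forced to be dense; in the non-autonomous setting a single collapsing map at the start makes the image $f_1^n(U)$ a single point independent of $U$, which decouples transitivity from any surjectivity requirement. I would therefore engineer the maps so that the compositions $f_1^n$ crush all of $X$ to one point, and then use later (or the same) maps to sweep those points across a dense set.

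Concretely, I would take $X=[0,1]$ with the usual metric and fix an enumeration $\{r_n : n\in\mathbb{N}\}$ of a countable dense subset of $X$, say the rationals in $[0,1]$. Define $f_n\colon X\to X$ to be the constant map $f_n(x)=r_n$ for every $x\in X$ and every $n\in\mathbb{N}$. Each $f_n$ is continuous, and since $f_n(X)=\{r_n\}\subsetneq X$, no $f_n$ is surjective; in particular infinitely many of them fail to be surjective. The key computation is that these constant maps make the orbit structure independent of the starting set: $f_1^1(x)=f_1(x)=r_1$, and inductively $f_1^{n}(x)=f_n\bigl(f_1^{n-1}(x)\bigr)=r_n$, so that $f_1^n(U)=\{r_n\}$ for every nonempty open $U\subseteq X$ and every $n\in\mathbb{N}$. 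To verify transitivity, let $U,V\subseteq X$ be nonempty open sets. By density of $\{r_n\}$ there is some $n$ with $r_n\in V$, and for this $n$ we get $f_1^n(U)\cap V=\{r_n\}\cap V=\{r_n\}\neq\emptyset$. Hence $(X,f_{1,\infty})$ is transitive while every $f_i$ is non-surjective, which proves the proposition.

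There is essentially no hard step here; the only point deserving care is recognizing precisely why the autonomous obstruction disappears, namely that the first collapsing map severs the link between the diameter of $U$ and that of its image, after which the later maps are free to place those images densely. If one wishes the example to have infinitely many surjective maps as well (so that the non-autonomous system is not merely a sequence of constants), the same idea works by interleaving: set $f_{2k-1}\equiv r_k$ and let $f_{2k}=\mathrm{id}$ for all $k\in\mathbb{N}$. Then a short induction gives $f_1^{2k-1}(U)=f_1^{2k}(U)=\{r_k\}$ for every nonempty open $U$, so density of $\{r_k\}$ again yields transitivity via the indices $n=2k-1$, while the infinitely many odd-indexed maps $f_{2k-1}$ remain non-surjective and the even-indexed ones are surjective. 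Either construction establishes the claim; I would present the pure constant version as the cleanest and mention the interleaved variant as a remark.
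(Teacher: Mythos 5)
Your proposal is correct, but it takes a genuinely different route from the paper. The paper also argues by explicit example on $X=[0,1]$, but its mechanism is to interleave the non-surjective contraction $g_1(x)=x/2$ with a map $g_2$ satisfying $g_2\circ g_1=\mathrm{id}$ and with a tent map $g_3$, so that $f_1^{3k}=g_3^{\,k}$; transitivity of the non-autonomous system is then inherited from the topological transitivity of the autonomous tent map, while the maps $f_{3k+1}=g_1$ supply infinitely many non-surjective members. Your construction instead makes the dynamics completely degenerate: with $f_n\equiv r_n$ constant, every composition $f_1^n$ collapses $X$ to the single point $r_n$, and transitivity is immediate from density of $\{r_n\}$ with no appeal to any underlying autonomous dynamics. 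What each approach buys: yours is more elementary (the transitivity check is a one-line density argument, whereas the paper leaves its transitivity verification implicit and it requires knowing $g_2\circ g_1=\mathrm{id}$ plus transitivity of the tent map), and it makes \emph{every} $f_i$ non-surjective; the paper's example, on the other hand, shows the phenomenon can coexist with nondegenerate, genuinely chaotic dynamics and with infinitely many surjective maps in the sequence, so it is not driven by collapsing the space to points --- a concern your interleaved variant with identity maps only partially addresses, since the orbits there are still eventually a single dense sequence. Both your example and the paper's implicitly read the ``non-empty subset'' in Definition 2.3 as ``non-empty \emph{open} subset''; under the literal wording no system on $[0,1]$ could be transitive, so this reading is forced and does not disadvantage your argument.
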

\begin{proof}
Let $X=[0,1]$, $g_1(x)=\frac{x}{2}, x\in [0,1]$,
\[g_2(x)=\begin{cases}
2x, &  x\in [0,\frac{1}{2}], \\
1, &  x\in [\frac{1}{2},1],
\end{cases}\]
and $g_3$ be a tent map, that is,
\[g_3(x)=\begin{cases}
2x, &  x\in [0,\frac{1}{2}], \\
2x-1, &  x\in [\frac{1}{2},1].
\end{cases}\]
The sequence $f_{1,\infty}$ is given by
\[f_i=\begin{cases}
g_1, &  i=3k+1, \\
g_2, &  i=3k+2,  \\
g_3, &  i=3k+3, 
\end{cases}\]
where $ k\in \mathbb{Z_+}$, that is $f_{1,\infty}=\{g_1,g_2,...g_n,...\}$.
It is easy to see that all $f_{3k+1}=g_1$ are not surjective where $k\in \mathbb{Z_+}$.
\end{proof}

For autonomous system, it is known that  if $f$ is transitive, then $N_f(U,V)$ is infinite, and that if transitive system $(X,f)$ has isolated point, then $X$ is a finite periodic orbit \cite{q27,q7}. However, the example below shows that these conclusions above are unavailable for non-autonomous discrete system in general.

\begin{example}

Let $(X,d)$ be a metric space without isolated points, $a$ be a point with $d(a,X)>0$, and $f:X\rightarrow X$ be transitive and continuous. Since $(X,f)$ is transitive, there exists a transitive point $x_0\in X$. Let $Y:=X\cup \{a\}$, $f_1(x)\equiv a$, $f_2(x)\equiv x_0$, $f_i(x)\equiv f$ $(i=3,4,...)$.

For any nonempty open set $U\subseteq Y$ and the nonempty open set $V=\{a\}$, it is clear that $N_f(U,V)$ is finite. Moreover, the space $Y$ contains a isolated point but it is not a finite periodic orbit.
\end{example}

An interesting aspect of the theory of discrete dynamical systems lies in the fact that if a point $x_0$ is almost periodic if and only if it is minimal. As the two examples below shows, the situation is quite different if we replace ADS by NDS. That is to see, they are mutually nonequivalent for NDS.

\begin{example}

Let $f:S^1\rightarrow S^1$ be the map defined by $f(e^{2\pi i\theta})=e^{2\pi i(\theta+\alpha)}$, where $S^1$ is a unit circle on the complex plane, $\theta\in [0,1]$ and $\alpha$ is a irrational numbers. Let
$$f_{1,\infty}=\{f_1,f_2,...f_n,...\}=\{f, f^{-1}, f, f^{-1},f, f^{-1},... \}.$$

It clear that each $x\in S^1$ is periodic since $f_1^{2k}=id$ for any $k\in \mathbb{N}$. Further, each $x\in S^1$ is almost periodic. However, each $x\in S^1$ is not minimal since the orbit $orb(x,f_{1,\infty})=\{e^{2\pi i\theta},e^{2\pi i(\theta+\alpha)}\}$ is not an invariant set of $f_{1,\infty}$.

\end{example}

\begin{example}

Let $X=\{1,2,3\}$ and $f:X\rightarrow X$ be the map defined by $f(1)=2$,$f(2)=3$,$f(3)=1$. Let
\begin{equation*}
\begin{aligned}
f_{1,\infty}=&\{f_1,f_2,...f_n,...\}\\
=&\{f,id,f,id,id,f,id,id,id,...,f,\underbrace{id,id,...id}_{n-fold},... \}.
\end{aligned}
\end{equation*}

It is is easy to verify that each $x\in X$ is minimal. However, each $x\in S^1$ is not almost periodic.

\end{example}

In autonomous system it is known that both the definitions (M1) and (M2) are equivalent. However, the result is not true in non-autonomous system in general. The following theorem and example justify it.

\begin{theorem}\label{minimal1}
(M2) implies (M1).
\end{theorem}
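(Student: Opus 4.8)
The plan is to argue by contradiction, turning the failure of (M1) into a point whose orbit closure is trapped in a proper closed set, which directly contradicts (M2). So I would begin by assuming (M2) holds while (M1) fails, meaning there is a non-empty, invariant, proper, closed subset $A$ with $A\neq X$.

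The first key step is to show that the entire forward orbit of any point of $A$ remains inside $A$. I would fix $x\in A$ and prove by induction on $n$ that $f_1^n(x)\in A$ for every $n\in\mathbb{Z_+}$. The base case is $f_1^0(x)=x\in A$, and for the inductive step I would write $f_1^{n+1}(x)=f_{n+1}(f_1^n(x))$ and invoke invariance: since $f_1^n(x)\in A$ and $f_{n+1}(A)\subseteq A$, we get $f_1^{n+1}(x)\in A$. Consequently $orb(x,f_{1,\infty})\subseteq A$.

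The second step then uses that $A$ is closed and proper. Passing to closures gives $\overline{orb(x,f_{1,\infty})}\subseteq A\subsetneq X$, so the orbit of $x$ is not dense, i.e.\ $x$ is not a transitive point. Since $x\in A\subseteq X$, this contradicts (M2), which asserts that every point of $X$ is transitive. Hence no such $A$ exists and (M1) holds.

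I do not expect a genuine obstacle here: the argument uses only the definitions of invariant set and transitive point. The single place demanding care is the inductive step, where invariance is hypothesized only for the individual generators $f_i$ and must be propagated to the composed maps $f_1^n$; the index bookkeeping $f_1^{n+1}=f_{n+1}\circ f_1^n$ is exactly what makes this propagation routine. Everything else is immediate from the closedness and properness of $A$.
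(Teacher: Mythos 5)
Your proof is correct and follows essentially the same route as the paper's: assume (M1) fails, use invariance of the proper closed set to trap the orbit (and hence orbit closure) of any of its points inside it, and contradict (M2). The only difference is that you make explicit the induction $f_1^{n+1}=f_{n+1}\circ f_1^n$ that the paper leaves implicit when it asserts $\overline{orb(x)}\subseteq E$, which is a welcome bit of rigor but not a different argument.
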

\begin{proof}
Let $X$ be (M2). Assume if possible that $X$ is not (M1), then there exists a minimal set $E\subseteq X$ with $E\neq X$ such that $f_i(E)\subseteq E$ for any $i\in\mathbb{Z_+}$. Take $x\in E$, then $\overline{orb(x)}\subseteq E$. Since $X$ is (M2), then $\overline{orb(x,f_{1,\infty})}=X$, therefore $X\subseteq E$, which is a contradiction and hence $X$ is (M1).

\end{proof}

\begin{example}\label{minimal2}
Let $X=[0,1]$.
Let $f_{5k+1}=id$  where $k\in \mathbb{Z^+}$, $f_{5k+2}=\frac{x}{2}$, 
\[f_{5k+3}=\begin{cases}
2x, &  x\in [0,\frac{1}{2}], \\
1, &  x\in [\frac{1}{2},1],
\end{cases}\]

\[f_{5k+4}=\begin{cases}
x+\frac{1}{2^{k+2}}, &  x\in [0,\frac{1}{2}], \\
(1-\frac{1}{2^{k+1}})x+\frac{1}{2^{k+1}}, &  x\in [\frac{1}{2},1],
\end{cases}\]

\[f_{5k+5}=\begin{cases}
0, &  x\in [0,\frac{1}{2^{k+2}}], \\
x-\frac{1}{2^{k+2}}, &  x\in [\frac{1}{2^{k+2}},\frac{1}{2}+\frac{1}{2^{k+2}}], \\
\frac{2^{k+1}}{2^{k+1}-1}x-\frac{1}{2^{k+1}-1}, &  x\in [\frac{1}{2}+\frac{1}{2^{k+2}},1],
\end{cases}\]
where $k\in \mathbb{Z_+}$.

Note that $f_{5k+1}=id$, $f_{5k+3}\circ f_{5k+2}=id$ and $f_{5k+5}\circ f_{5k+4}=id$ for all $k\in \mathbb{Z_+}$.
For the non-autonomous discrete system $(X,f_{1,\infty}=\{f_1,f_2,...\})$, it is easy to see that $orb(1,{f_{1,\infty}})=\{\frac{1}{2},1\}$, therefore $\overline{orb(1,{f_{1,\infty}})}\neq [0,1]$, further $X$ is not (M2). Next we will claim that $X$ is (M1). If not, there exists a non-empty closed and invariant subset $M\subset [0,1]$ with $M\neq X$. Further, there exists a closed interval $[a,b]\subseteq M$ such that there is no closed intervals $[c,d]$ satisfying $[a,b]\subseteq[c,d]\subseteq M$ with $[a,b]\neq[c,d]$ ($[a,b]$ degenerates to a point if $a=b$). We will prove that $a=0, b>\frac{1}{2}$. For this end, firstly, assume that $a\neq0$, then there exists some $\varepsilon>0$ such that $[a-\varepsilon,a]\nsubseteq M$, further, there exists some $k_0\in \mathbb{Z_+}$ satisfying $\frac{1}{2^{k_0+2}}<\varepsilon$, therefore $f_{5k_0+5}(a)\notin M$, witch is a contradiction. Hence $a=0$. Secondly, assume that $b\leq \frac{1}{2}$, then $f_4([0,b])\nsubseteq M$, which is a contradiction. Therefore, $b>\frac{1}{2}$. For $[0,b]$ with $b>\frac{1}{2}$, we have $f_3([0,b])=[0,1]\nsubseteq M$, which contradicts to $f_i([0,b])\subseteq M$. Therefore $(X,f_{1,\infty})$ is (M1).

\end{example}

The following example shows that there exists a a finitely generated non-minimal non-autonomous system on a compact metric space which is topologically transitive with dense minimal points but not thickly syndetically sensitive. This example disprove Theorem 5.1 proved by Salman and Das \cite{q5}. We modify one of the conditions in {\rm\cite[Theorem 5.1]{q5}} and reprove it in the following Theorem.
\begin{example}
Consider the shift map $\sigma:\Sigma_2\rightarrow\Sigma_2$ defined in {\rm\cite[Example 5.1]{q5}}.

Let
\begin{equation*}
\begin{aligned}
f_{1,\infty}=&\{f_1,f_2,...f_n,...\}\\
=&\{\sigma, id,  \sigma^{-1}, id, \sigma, id, id, \sigma, id, id, \sigma^{-1}, id, id,\sigma^{-1}, id, id ,..., \\
& \overbrace{\sigma, \underbrace{id,...,id}_{n-fold}, \sigma, id,...,id,..., \sigma, id,...,id}^{n-fold},\\
&\overbrace{\sigma^{-1}, \underbrace{id,...,id}_{n-fold},\sigma^{-1}, id,...,id,..., \sigma^{-1}, id,...,id}^{n-fold},...\}.
\end{aligned}
\end{equation*}

In \cite{q5}, a non-empty subset $U\subseteq X$ is called an invariant set of $f_{ 1,\infty}$ if $f_1^n(U)\subseteq U$ for any $n\in \mathbb{N}$, a set $U$ is called minimal subset of $f_{ 1,\infty}$ if it is a non-empty, invariant closed subset of $X$ and no proper subset of $U$ is non-empty, invariant and closed, and a point $x\in X$ is called minimal point if its closure orbit is a minimal subset of $f_{ 1,\infty}$.

To verify the result we use a similar argument to the example 5.1 given in  \cite{q5}. Note that the $orb_{f_{1,\infty}}(x)=\{x,\sigma(x),\sigma^2(x),...\}=orb_\sigma(x)$. Since the minimal points of $(\Sigma_2,\sigma)$ are dense in $\Sigma_2$, the minimal points of $(\Sigma_2,f_{1,\infty})$ are also dense in $\Sigma_2$. Besides, $(\Sigma_2,\sigma)$ is non-minimal autonomous system on a compact metric space, so $(\Sigma_2,f_{1,\infty})$ is a finitely generated non-minimal non-autonomous system on a compact metric space. However, it is obvious that $(\Sigma_2,f_{1,\infty})$ cannot be syndetically sensitive, hence, $(\Sigma_2,f_{1,\infty})$ cannot be thickly syndetically sensitive.

\end{example}

\begin{theorem}
Let $(X,f_{1,\infty})$ be a finitely generated non-autonomous discrete system on a compact metric space. Assume that

(1)$f_{1,\infty}$ is nonminimal (in the sense of (M1)),

(2)$f_{1,\infty}$ is topologically transitive,

(3)$f_{1,\infty}$ has set of almost periodic points dense.

Then it is thickly syndetically sensitive.
\end{theorem}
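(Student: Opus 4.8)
The plan is to fix one sensitivity constant $\delta$ coming from the failure of minimality, and then for each nonempty open $U$ and each $k\in\mathbb N$ to exhibit a syndetic set of integers $N$ with $[N,N+k]\subseteq N_{f_{1,\infty}}(U,\delta)$. Since in the definition of $N_{f_{1,\infty}}(U,\delta)$ the pair $x,y\in U$ may depend on $n$, this set equals $\{n:\operatorname{diam} f_1^n(U)>\delta\}$, so it suffices to control the diameters of the images $f_1^n(U)$. By hypothesis (1) the system is nonminimal in the sense of (M1), so there is a nonempty invariant proper closed set $M\subsetneq X$. Choose $b\in X\setminus M$ and put $\delta:=\tfrac14 d(b,M)>0$; write $M_\rho$ for the open $\rho$-neighborhood of $M$ and $V:=X\setminus\overline{M_{2\delta}}$, which is nonempty since $b\in V$. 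A point within $\tfrac\delta2$ of $M$ and a point of $V$ are more than $\delta$ apart, so $\{n:\operatorname{diam} f_1^n(U)>\delta\}\supseteq A\cap B$, where $A:=\{n:f_1^n(U)\cap M_{\delta/2}\neq\emptyset\}$ and $B:=\{n:f_1^n(U)\cap V\neq\emptyset\}=N_{f_{1,\infty}}(U,V)$.

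The finite generation hypothesis together with invariance of $M$ produces long runs inside $A$. Writing $F=\{g_1,\dots,g_m\}$, the family $F$ is finite and $X$ compact, so the collection of all words in $F$ of length at most $k$ is uniformly equicontinuous: for each $k$ there is $\eta_k>0$ such that any composition of at most $k$ members of $F$ sends $\eta_k$-close points to $\tfrac\delta2$-close points. Now if $y\in U$ satisfies $f_1^{N_0}(y)\in M_{\eta_k}$, then for $0\le j\le k$ the point $f_1^{N_0+j}(y)$ is a length-$j$ $F$-word applied to $f_1^{N_0}(y)$, hence lies within $\tfrac\delta2$ of the corresponding $F$-word applied to a nearby point of $M$; the latter stays in $M$ by invariance, so $f_1^{N_0+j}(U)\cap M_{\delta/2}\neq\emptyset$ and $[N_0,N_0+k]\subseteq A$. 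Since $M_{\eta_k}$ is a nonempty open set, transitivity and Lemma \ref{yinlijmaa2} give $N_{f_{1,\infty}}(U,M_{\eta_k})\neq\emptyset$, so such an $N_0$ exists for every $k$; hence $A$ contains runs of arbitrary length and is thick.

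For the syndetic direction the plan is to use the density of almost periodic points (hypothesis (3)). The idea is that a suitable almost periodic point $p$ with $B(p,\beta)\subseteq U$ recurs along a syndetic set, namely $\{n:d(f_1^n(p),p)<\beta\}$, and if $p$ can be chosen with $p\in V$ this forces $f_1^n(p)\in V$, hence $n\in B$, along a syndetic set; thus $B$ would be syndetic. One then wants to combine the thickness of $A$ with the syndeticity coming from $p$ into the single statement that $A\cap B$ is thickly syndetic; here I would invoke the combinatorial fact that the thickly syndetic sets form a filter, after upgrading $A$ to thickly syndetic by anchoring the near-$M$ runs of the previous paragraph at the syndetic return times of $p$, so that each return simultaneously certifies the far witness and triggers a full window $[N,N+k]\subseteq A$. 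If this synchronization can be arranged, then $\{N:[N,N+k]\subseteq A\cap B\}$ is syndetic, which is exactly thick syndeticity of $N_{f_{1,\infty}}(U,\delta)$, and the proof closes.

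The main obstacle is precisely this reconciliation of the thick and the syndetic structures. The thickness of $A$ is essentially automatic from the invariance of $M$ and uniform continuity, but turning the \emph{existence} of near-$M$ hits (which transitivity gives only infinitely often) into a \emph{syndetic} family of hits, and guaranteeing a far witness simultaneously over the entire window $[N,N+k]$, is delicate: in a non-autonomous system the maps appearing in positions $N+1,\dots,N+k$ differ from those in $1,\dots,k$ and no commutativity is assumed (in contrast to Theorem \ref{dengdumingan}), so recurrence of $p$ does not transfer to shifted windows or to neighboring points as it would in the autonomous case. The hardest subcase is $U\subseteq M_{2\delta}$, where $U$ contains no far point of its own and the far witness must be produced from forward images by transitivity while retaining the syndetic control inherited from the almost periodic points; making this synchronization rigorous is where the real difficulty of the theorem lies.
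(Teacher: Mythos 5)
Your proposal contains a genuine gap, and it is the one you flag yourself in the closing paragraphs: the decomposition $N_{f_{1,\infty}}(U,\delta)\supseteq A\cap B$ with $A$ merely thick and $B$ merely syndetic can never yield thick syndeticity. The intersection of a thick set with a syndetic set is always infinite, but in general it is neither thick nor syndetic (take $A=\bigcup_n [n!,\,n!+n]$ and $B$ the even integers: $A\cap B$ contains no two consecutive integers and has unbounded gaps). So even if your ``hardest subcase'' $U\subseteq M_{2\delta}$ were handled, the argument as structured proves only sensitivity along an infinite set of times; the ``synchronization'' you defer is not a technical refinement but the entire content of the theorem. The structural reason the plan cannot close is that your far target $V=X\setminus\overline{M_{2\delta}}$ is \emph{not invariant}. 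Your window-upgrade trick for $A$ (once $f_1^{N_0}(U)$ enters $M_{\eta_k}$, uniform equicontinuity of $F$-words of length at most $k$ plus invariance of $M$ keeps the image within $\delta/2$ of $M$ for the next $k$ steps) depends essentially on invariance of the target set, so it is unavailable for $B$, and syndeticity is the most $B$ could ever enjoy under your scheme.

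The paper's proof supplies exactly the idea you are missing: replace the dichotomy ``near $M$ / far from $M$'' by \emph{two invariant targets}. It fixes two minimal sets $S,T$ of $(X,f_{1,\infty})$ with $d(S,T)=a>0$ and shows that \emph{both} $N(U,B(S,a/4))$ and $N(U,B(T,a/4))$ are thickly syndetic, using precisely your two ingredients but anchored at invariant sets: point transitivity (Lemma \ref{yinlicd}) brings an image of $U$ close to $S$; a nearby almost periodic point $w\in U$ then lands $\delta$-close to $S$ along a syndetic set of times $m_j$; and since $S$ is invariant, the uniform equicontinuity estimate (\ref{thickmg}) (valid for all starting indices $n$ and all word lengths $j\le k$, by finite generation and compactness) converts each such hit into a full window $\{m_j,m_j+1,\dots,m_j+k\}\subseteq N(U,B(S,a/4))$ --- your trick, but now applicable because the target is invariant. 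The same holds for $T$. Then the filter property you invoke only wishfully is applied legitimately: the intersection of two thickly syndetic sets is thickly syndetic, and every $l$ in the intersection supplies $u,v\in U$ with $d(f_1^l(u),S)<a/4$ and $d(f_1^l(v),T)<a/4$, whence $d(f_1^l(u),f_1^l(v))>a/2$ by the triangle inequality and $d(S,T)=a$. Both witnesses are forward images landing near the two invariant sets, so your problematic subcase $U\subseteq M_{2\delta}$ never arises, and no commutativity or shifted-window recurrence is needed.
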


\begin{proof}
Let  $U$ be a non-empty open subset of $X$ and $S, T$ be a pair of minimal subsets of $(X,f_{1,\infty})$ with $d(S,T):=a>0$. Since $(X,f_{1,\infty})$ is finitely generated and $X$ is compact, therefore, for the $a$ above and any $k\in \mathbb{N}$, there exists $\delta>0$ such that for all $x,y\in X$, for each $j=\{1,2,...,k\}$ and for any $n\geq 1$, we have

\begin{equation}\label{thickmg}
d(x,y)<\delta\Rightarrow d(f_n^j(x),f_n^j(y))<\frac{a}{4}.
\end{equation}

Besides, since $X$ is second countable and complete, we get that $(X,f_{1,\infty})$ is point transitive by lemma \ref{yinlicd}. For any transitive point $z\in U$ and minimal set $S$, there is $m\in \mathbb{N}$ such that $d(f_1^m(z),S)<\frac{\delta}{2}$. As $(X,f_{1,\infty})$ has dense set of almost periodic points, there exists an almost point $w\in U$ sufficiently close to $z$. In addition, $f_1^m$, for any fixed natural number $m$, is uniformly continuous, therefore, we have $d(f_1^m(z),f_1^m(w))<\frac{\delta}{2}$ and hence using triangle inequality one can get that $d(f_1^m(w),S)<\delta$. Since $w$ is an almost periodic point, there exists a syndetic set $\{m_j:j\in\mathbb{N}\}$ satisfying $d(f_1^{m_j}(w),S)<\delta$. Therefore, by (\ref{thickmg}), we have $d(f_{m_j+1}^i\circ f_1^{m_j}(w),f_{m_j+1}^i(S))<\frac{a}{4}$ and using the minimality of $S$, we get that $d(f_{m_j+1}^i\circ f_1^{m_j}(w),S)<\frac{a}{4}$ for every $i=1,2,...,k$ and $j\in \mathbb{N}$. Now, since $k$ is an arbitrary natural number, therefore $N(U,B(S,a/4))$ is thickly syndetic, where $N(x,U)=\{n\in \mathbb{N}: f_1^n(x)\in U\}$. By similar arguments, one can get that $N(U,B(T,a/4))$ is thickly syndetic. Hence, $N(U,B(S,a/4))\cap N(U,B(T,a/4))$ is syndetic. Thus, for every $r\in N(U,B(S,a/4))\cap N(U,B(T,a/4))$, we have
\begin{equation}\label{thickmg1}
\{r,r+1,r+2,...,r+k\}\subseteq N(U,B(S,a/4))\cap N(U,B(T,a/4)).
\end{equation}
Since $k$ is arbitrary, we have
 $$\{r\in \mathbb{N}: r+j\in N(U,B(S,a/4))\cap N(U,B(T,a/4)), \text{ for all } 1\leq j\leq k\}$$ is syndetic, for any $k\in \mathbb{N}$. Therefore, $N(U,B(S,a/4))\cap N(U,B(T,a/4))$ is thickly syndetic. Let $l\in N(U,B(S,a/4))\cap N(U,B(T,a/4))$ and hence there exist $u,v\in U$ such that $d(f_1^{l}(u),S)<a/4$ and $d(f_1^{l}(v),T)<a/4$. Consequently, by the triangle inequality $d(S,T)\leq d(f_1^{l}(u),S)+d(f_1^{l}(u),f_1^{l}(v))+d(f_1^{l}(v),T)$, we have
\begin{equation*}
\begin{aligned}
d(f_1^{l}(u),f_1^{l}(v))&\geq d(S,T)-d(f_1^{l}(u),S)-d(f_1^{l}(v),T) \\
&> a-\frac{a}{4}-\frac{a}{4} \\
&=\frac{a}{2}.
\end{aligned}
\end{equation*}
Hence, $l\in N_{f_{1,\infty}}(U,a/2)$, which implies that
$$N(U,B(S,a/4))\cap N(U,B(T,a/4))\subseteq N_{f_{1,\infty}}(U,a/2).$$
 Thus, by (\ref{thickmg1}), we have $\{r,r+1,r+2,...,r+k\}\subseteq N_{f_{1,\infty}}(U,a/2)$ and since $k$ is arbitrary, therefore we get that $N_{f_{1,\infty}}(U,a/2)$ is thickly syndetic for every open non-empty subset of $X$ and some $a>0$, which implies that $f_{1,\infty}$ is thickly syndetically sensitive.

\end{proof}

\begin{corollary}
Let $(X,f_{1,\infty})$ be a finitely generated non-autonomous discrete system on a compact metric space. Assume that $f_{1,\infty}$ is topologically transitive and has set of periodic points dense. Then it is thickly syndetically sensitive.
\end{corollary}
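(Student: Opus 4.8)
The plan is to obtain this corollary as a direct specialization of the preceding Theorem: I would check that the hypotheses ``transitive $+$ dense periodic points'' force the three hypotheses of that Theorem, and then quote it verbatim. Condition (2) of the Theorem (topological transitivity) is assumed outright, so the real work is to recover condition (3) (density of almost periodic points) and condition (1) (nonminimality in the sense of (M1)).

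For condition (3), the key observation is that every periodic point is almost periodic. If $x$ is $k$-periodic, then $f_1^{kn}(x)=x$ for all $n\in\mathbb{N}$, so for every $\varepsilon>0$ the return set $\{n\in\mathbb{N}\mid d(f_1^n(x),x)<\varepsilon\}$ contains $k\mathbb{N}=\{k,2k,3k,\dots\}$. Since $k\mathbb{N}$ meets every block of $k$ consecutive integers, it is syndetic, whence the return set is syndetic and $x$ is an almost periodic point. Therefore the (dense) set of periodic points is a set of almost periodic points, and condition (3) follows at once.

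The main obstacle is condition (1), nonminimality. Here I would rule out that $X$ is itself (M1)-minimal by producing a proper nonempty invariant closed subset from the periodic structure. For a periodic point $p$ and the finite generating family $F=\{f_1,\dots,f_m\}$, the closure $A$ of the orbit of $p$ under the semigroup generated by $F$ is, by continuity and compactness of $X$, a nonempty closed set with $f_i(A)\subseteq A$ for every $i$, i.e.\ an invariant closed set; if for some periodic point $A\subsetneq X$ we are done, and the degenerate alternative (in which the semigroup-orbit closure of every periodic point exhausts $X$, so that $X$ reduces to a single periodic piece) is incompatible with transitivity on a space with more than one point. This last point is where I expect the care to be needed, because for non-autonomous systems the forward orbit of a point is generally not invariant---invariance demands closure under every generator---so one must argue with the full semigroup orbit rather than the forward orbit, and must dispose of the trivial one-orbit case. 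Once conditions (1)--(3) are established, the corollary is immediate by applying the Theorem, which then delivers that $N_{f_{1,\infty}}(U,a/2)$ is thickly syndetic for every nonempty open $U$.
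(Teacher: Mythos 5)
Your overall strategy --- reduce to the preceding Theorem by checking its three hypotheses --- is exactly what the paper intends (it states the Corollary with no proof at all), and your verification of condition (3) is correct: if $f_1^{kn}(x)=x$ for all $n$, then for every $\varepsilon>0$ the return set $\{n\in\mathbb{N}\mid d(f_1^n(x),x)<\varepsilon\}$ contains $k\mathbb{N}$ and is therefore syndetic, so dense periodic points give dense almost periodic points. Your construction of the candidate invariant set for condition (1) is also the right object: the closure of the semigroup orbit of a periodic point under the finite generating family is nonempty, closed, and satisfies $f_i(A)\subseteq A$ for every $i$, which is the paper's notion of invariance.

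The genuine gap is the final step, where you dismiss the degenerate alternative by asserting that ``the semigroup-orbit closure of every periodic point exhausts $X$'' is incompatible with transitivity on a space with more than one point. That assertion is false. Take $X=\{1,2,3\}$ with the discrete metric and $f_i=f$ for all $i$, where $f(1)=2$, $f(2)=3$, $f(3)=1$ (the very map used in one of the paper's examples, here with the constant sequence). This is a finitely generated NDS on a compact metric space; it is topologically transitive (the cycle visits every point); every point is periodic, so periodic points are dense; $X$ has more than one point; and yet the semigroup-orbit closure of every point is all of $X$, i.e.\ the system is (M1)-minimal, so hypothesis (1) of the Theorem fails. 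Worse, this system is not sensitive at all: the singleton $U=\{1\}$ is open, and any $x,y\in U$ satisfy $d(f_1^n(x),f_1^n(y))=0$ for every $n$; hence it is certainly not thickly syndetically sensitive. So the degenerate case you tried to dispose of is not disposable --- it is a counterexample, both to your claimed implication and to the Corollary as literally stated. No argument can recover nonminimality from ``transitive $+$ dense periodic points'' alone; the Corollary needs an implicit nontriviality hypothesis (for instance that $X$ is infinite, or perfect, or that nonminimality is assumed outright as in the Theorem), exactly as in Banks' theorem for ADS, where infiniteness of $X$ is what produces two disjoint periodic orbits playing the role of the sets $S$ and $T$ in the Theorem's proof.
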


In \cite{q18,q5}, the authors studied that for $k$-periodic non-autonomous discrete system, the relationships of multi-sensitivity and property P between non-autonomous discrete system $(X,f_{1,\infty})$ and induced  autonomous system $(X,f_k\circ\cdots f_1)$. The following four theorems  explore that the relationships of other  dynamical properties between non-autonomous discrete system $(X,f_{1,\infty})$ and induced  autonomous system $(X,g=f_k\circ\cdots f_1)$.

\begin{theorem}
Let $(X,f_{1,\infty})$ be a $k$-periodic non-autonomous discrete system and $g=f_k\circ f_{k-1}\circ\cdots f_1$. If the autonomous discrete system $(X,g)$ is Li-Yorke sensitive, then $(X,f_{1,\infty})$ is also Li-Yorke sensitive. Converse is true when each $f_p(p=1,2,...,k)$ is uniformly continuous.
\end{theorem}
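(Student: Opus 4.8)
The plan is to establish the two directions separately, with the forward direction (from the induced autonomous system to the non-autonomous one) being the more delicate one since it holds unconditionally, while the converse requires the uniform continuity hypothesis on the generators.

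First I would set up the key relationship between orbits. Since $(X,f_{1,\infty})$ is $k$-periodic, we have $f_{i+k}=f_i$ for all $i$, and $g=f_k\circ f_{k-1}\circ\cdots\circ f_1$. The fundamental observation is that $f_1^{kn}=g^n$ for every $n\in\mathbb{N}$, so the iterates of $g$ appear as a subsequence of the non-autonomous trajectory along multiples of $k$. For the forward direction, suppose $(X,g)$ is Li-Yorke sensitive with constant $\delta>0$. Given any $x\in X$ and $\varepsilon>0$, Li-Yorke sensitivity of $g$ yields $y\in B(x,\varepsilon)$ with $\liminf_{n\to\infty}d(g^n(x),g^n(y))=0$ and $\limsup_{n\to\infty}d(g^n(x),g^n(y))\geq\delta$. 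I would then claim the same pair $x,y$ witnesses Li-Yorke sensitivity for $f_{1,\infty}$ with the same $\delta$. The $\liminf$ condition transfers immediately: since $\{d(f_1^{kn}(x),f_1^{kn}(y))\}_n=\{d(g^n(x),g^n(y))\}_n$ is a subsequence of $\{d(f_1^m(x),f_1^m(y))\}_m$ whose $\liminf$ is $0$, the full sequence also has $\liminf$ equal to $0$. The $\limsup$ condition likewise transfers because $\limsup$ over the subsequence indexed by multiples of $k$ is a lower bound for the $\limsup$ over all $m$, giving $\limsup_{m\to\infty}d(f_1^m(x),f_1^m(y))\geq\delta$.

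For the converse, assume each $f_p$ ($p=1,\dots,k$) is uniformly continuous and that $(X,f_{1,\infty})$ is Li-Yorke sensitive with constant $\delta'>0$. Here the difficulty is the reverse: a pair that scrambles along the full non-autonomous sequence need not a priori scramble along the subsequence of multiples of $k$. The role of uniform continuity is to control the intermediate iterates: writing any $m=kn+r$ with $0\le r<k$, we have $f_1^m=f_1^r\circ g^n$ (using $k$-periodicity), and uniform continuity of the finitely many compositions $f_1^r$ (for $r=0,\dots,k-1$) gives a modulus of continuity. The plan is to use this to show that $\limsup_{n\to\infty}d(g^n(x),g^n(y))$ is controlled below by $\limsup_{m\to\infty}d(f_1^m(x),f_1^m(y))$: if the distances at times $m=kn+r$ are large, uniform continuity forces the distances at the preceding multiples $kn$ to be bounded below (otherwise the images under $f_1^r$ would have to be close), so some positive separation persists along multiples of $k$. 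Symmetrically, uniform continuity lets me show the $\liminf$ along multiples of $k$ is still $0$.

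I expect the main obstacle to be the quantitative bookkeeping in the converse. Uniform continuity gives, for each target accuracy, a $\delta$ such that $d(u,v)<\delta$ implies $d(f_1^r(u),f_1^r(v))<\varepsilon$ for all $r<k$; the contrapositive says that if some intermediate distance $d(f_1^{kn+r}(x),f_1^{kn+r}(y))\geq\eta$ stays large infinitely often, then $d(g^n(x),g^n(y))=d(f_1^{kn}(x),f_1^{kn}(y))$ cannot be too small infinitely often, pinning down the new $\limsup$ constant in terms of $\delta'$ and the modulus of continuity. Care is needed to produce a single uniform sensitivity constant $\delta$ for $g$ valid for every initial point $x$, rather than one depending on $x$; this is where the uniformity (as opposed to mere continuity) of each $f_p$ is essential, and I would isolate it as the crux of the argument.
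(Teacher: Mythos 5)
Your forward direction is exactly the paper's argument (the $g$-distances $d(g^n(x),g^n(y))=d(f_1^{kn}(x),f_1^{kn}(y))$ form a subsequence of the non-autonomous distances, so $\liminf=0$ and $\limsup\ge\delta$ pass to the full sequence), and the $\limsup$ half of your converse also matches the paper: by $k$-periodicity there are only finitely many interpolating compositions, and the contrapositive of their uniform continuity turns separation $\ge\delta$ at a time $kn+r$ into separation $\ge\delta_1$ at the multiple $kn$, with $\delta_1$ depending only on $\delta$ and the moduli of continuity --- hence uniform in $x$, as you correctly emphasize.

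The gap is in the $\liminf$ half of the converse. You propose to handle it ``symmetrically'' with the same decomposition $f_1^{kn+r}=f_1^r\circ g^n$; but with that factorization the transfer fails: from $d\bigl(f_1^r(g^n(x)),f_1^r(g^n(y))\bigr)\to 0$ you cannot conclude $d(g^n(x),g^n(y))\to 0$, since uniform continuity propagates closeness forward only --- the backward implication would require equicontinuity (indeed existence) of the inverses, which is not assumed. The two halves are genuinely not symmetric. The repair, which is what the paper does, is to use the complementary factorization: if the non-autonomous distance is small at time $m=kn+r$, apply the remaining maps $f_{r+1},\dots,f_k$ and use the forward direction of uniform continuity of the finitely many compositions $f_{r+1}^{k-r}$, $0\le r<k$, to conclude via $g^{n+1}=f_{r+1}^{k-r}\circ f_1^{kn+r}$ that the distance at the \emph{next} multiple of $k$ is also small. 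This is precisely how the paper deploys its Lemma~\ref{yinli1}: for the $\liminf$ sequence it writes the times as $n_{j_l}=kq_{1j}-r_1$ (multiples of $k$ reached by applying $r_1$ further maps, forward use of uniform continuity), while for the $\limsup$ sequence it writes them as $m_{j_l}=kq_{2j}+r_2$ (contrapositive use). Once you make this asymmetry explicit, your outline coincides with the paper's proof.
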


\begin{proof}

Suppose that $(X,g)$ is Li-Yorke sensitive, then there is a sensitive constant $\delta>0$ such that for any non-empty open subset $U\subseteq X$, there exist $x,y\in U$ satisfying
$$\liminf_{n\rightarrow\infty}d(g^n(x),g^n(y))=0,$$
and
$$\limsup_{n\rightarrow\infty}d(g^n(x),g^n(y))\geq\delta.$$
Thus there exist two increasing sequence $\{n_i\}$ and $\{m_i\}$ of positive integers such that
$$\lim_{i\rightarrow\infty}d(g^{n_i}(x),g^{n_i}(y))=0,$$
and
$$\lim_{i\rightarrow\infty}d(g^{m_i}(x),g^{m_i}(y))\geq\delta.$$
Since $g=f_k\circ f_{k-1}\circ\cdots f_1$, then we have $g^{n_i}=(f_1^k)^{n_i}=f_1^{kn_i}$ and $g^{m_i}=(f_1^k)^{m_i}=f_1^{km_i}$.
Therefore,
$$\lim_{i\rightarrow\infty}d(f_1^{kn_i}(x),f_1^{kn_i}(y))=0,$$
and
$$\lim_{i\rightarrow\infty}d(f_1^{km_i}(x),f_1^{km_i}(y))\geq\delta.$$
It follows that $(X,f_{1,\infty})$ is Li-Yorke sensitive.

Conversely, let $(X,f_{1,\infty})$ be Li-Yorke sensitive with $\delta>0$ as a constant of sensitivity. Then there exist two increasing sequence $\{n_j\}$ and $\{m_j\}$ of positive integers such that
\begin{equation}\label{lymg1}
\lim_{j\rightarrow\infty}d(f_1^{n_j}(x),f_1^{n_j}(y))=0,
\end{equation}
and
\begin{equation}\label{lymg2}
\lim_{j\rightarrow\infty}d(f_1^{m_j}(x),f_1^{m_j}(y))\geq\delta.
\end{equation}
By lemma \ref{yinli1},  there exist integers $r_1,r_2(0\le r_1,r_2<n)$, subsequence $\{n_{j_l}\}$ of $\{n_j\}$, $\{m_{j_l}\}$ of $\{m_j\}$, and increasing sequence of positive integers $\{q_{1j}\}$ and  $\{q_{2j}\}$ such that $n_{j_l}=kq_{1j}-r_1$ and $m_{j_l}=kq_{2j}+r_2$.
Since $g=f_k\circ f_{k-1}\circ\cdots f_1$, then (\ref{lymg1}) and (\ref{lymg2}) imply that for any $j\in \mathbb{N}$,
\begin{equation}\label{lymg3}
\lim_{j\rightarrow\infty}d(f_1^{n_{j_l}}(x),f_1^{n_{j_l}}(y))=0
\end{equation}
and
\begin{equation}\label{lymg4}
\begin{aligned}
&\lim_{j\rightarrow\infty}d(f_{kq_{2j}+1}^{r_2}\circ f_1^{kq_{2j}}(x),f_{kq_{2j}+1}^{r_2}\circ f_1^{kq_{2j}}(y))\\
=&\lim_{j\rightarrow\infty}d(f_{kq_{2j}+1}^{r_2}\circ g^{q_{2j}}(x),f_{kq_{2j}+1}^{r_2}\circ g^{q_{2j}}(y))\\
\geq&\delta.
\end{aligned}
\end{equation}
Since $(X,f_{1,\infty})$ is $k$-periodic and each $f_p(p=1,2,...,k)$ is uniformly continuous, then for any integer $0\leq r<k$, $f_p^r$ is uniformly continuous. Therefore, ({\ref{lymg3}}) implies that

\begin{equation}\label{lymg5}
\begin{aligned}
&\lim_{j\rightarrow\infty}d(g^{q_{1j}}(x),g^{q_{1j}}(y))\\
=&\lim_{j\rightarrow\infty}d(f_1^{kq_{1j}}(x),f_1^{kq_{1j}}(y))\\
=&\lim_{j\rightarrow\infty}d(f_{n_{j_l}+1}^{r_1}\circ f_1^{n_{j_l}}(x),f_{n_{j_l}+1}^{r_1}\circ f_1^{n_{j_l}}(y))\\
=&0,
\end{aligned}
\end{equation}
besides, for the sensitive constant $\delta$, there exists $\delta_1>0$ such that for every $0\leq r<k$ and any $x,y\in X$ with $d(x,y)<\delta_1$, we have $d(f_p^r(x),f_p^r(y))<\delta$.
that is, for every $0\leq r<k$ and any $x,y\in X$, if $d(f_p^r(x),f_p^r(y))\geq\delta$, then $d(x,y)\geq\delta_1$. This together with ({\ref{lymg4}}) can be verified that
\begin{equation}\label{lymg6}
\lim_{j\rightarrow\infty}d(g^{q_{2j}}(x),g^{q_{2j}}(y))\geq\delta_1.
\end{equation}
({\ref{lymg5}}) and ({\ref{lymg6}}) imply that $(X,g)$ is Li-Yorke sensitive.

\end{proof}

\begin{theorem}\label{ymg1}
Let $(X,f_{1,\infty})$ be a $k$-periodic non-autonomous discrete system and $g=f_k\circ f_{k-1}\circ\cdots f_1$. If $(X,f_{1,\infty})$ is cofinitely sensitive, then the autonomous discrete system $(X,g)$ is also cofinitely sensitive. Converse is true when each $f_p(p=1,2,...,k)$ is uniformly continuous.
\end{theorem}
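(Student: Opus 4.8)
The plan is to exploit the one structural identity that powers the preceding $k$-periodic theorems, namely $f_1^{kn}=g^n$, together with its partial-period refinements. Writing every index as $n=km+r$ with $0\le r<k$ and using $k$-periodicity ($f_{km+i}=f_i$ whenever $1\le i\le k$), one obtains the two composition formulas
\[
f_1^{n}=f_1^{r}\circ g^{m},\qquad g^{m+1}=f_{r+1}^{k-r}\circ f_1^{n},
\]
where $f_1^r=f_r\circ\cdots\circ f_1$ and $f_{r+1}^{k-r}=f_k\circ\cdots\circ f_{r+1}$. As $r$ ranges over $0,\dots,k-1$, both families $\{f_1^r\}$ and $\{f_{r+1}^{k-r}\}$ consist of only finitely many compositions of $f_1,\dots,f_k$; this finiteness is what will later supply a single modulus of uniform continuity, which is exactly what the converse requires.

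\textbf{Forward direction.} This is the routine one. Assume $(X,f_{1,\infty})$ is cofinitely sensitive with constant $\delta$ and fix a nonempty open $U$; then $N_{f_{1,\infty}}(U,\delta)$ contains all integers $\ge N_0$ for some $N_0$. For every $m$ with $km\ge N_0$ there are $x,y\in U$ with $d(f_1^{km}(x),f_1^{km}(y))>\delta$, and since $f_1^{km}=g^m$ this says $d(g^m(x),g^m(y))>\delta$. Hence all sufficiently large $m$ lie in $N_g(U,\delta)$, so $N_g(U,\delta)$ is cofinite and $(X,g)$ is cofinitely sensitive with the same constant $\delta$.

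\textbf{Converse.} Here I would assume $(X,g)$ cofinitely sensitive with constant $\delta$ and each $f_p$ uniformly continuous, and produce a single constant $\delta'$ witnessing cofinite sensitivity of $f_{1,\infty}$. Using uniform continuity of the finitely many maps $f_{r+1}^{k-r}$ ($0\le r<k$), pick $\delta'>0$ so that for every such $r$ and all $a,b\in X$, $d(a,b)<\delta'$ forces $d(f_{r+1}^{k-r}(a),f_{r+1}^{k-r}(b))<\delta$. Now fix a nonempty open $U$ and let $N_g(U,\delta)$ contain all integers $\ge M_0$. For any $n=km+r$ with $m+1\ge M_0$, cofinite sensitivity of $g$ gives $x,y\in U$ with $d(g^{m+1}(x),g^{m+1}(y))>\delta$; rewriting $g^{m+1}=f_{r+1}^{k-r}\circ f_1^{n}$ this reads $d\big(f_{r+1}^{k-r}(f_1^{n}(x)),f_{r+1}^{k-r}(f_1^{n}(y))\big)>\delta$. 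The contrapositive of the choice of $\delta'$ then yields $d(f_1^{n}(x),f_1^{n}(y))\ge\delta'$, so $n\in N_{f_{1,\infty}}(U,\delta'/2)$. Thus all but finitely many $n$ (those with $m+1<M_0$) lie in $N_{f_{1,\infty}}(U,\delta'/2)$, making it cofinite; as $\delta'$ depends only on $\delta$ and $f_1,\dots,f_k$ and not on $U$, it follows that $f_{1,\infty}$ is cofinitely sensitive.

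\textbf{Main obstacle.} The only genuine difficulty is the converse, and it is precisely the point where expansion cannot be pushed forward: from $d(g^{m+1}(x),g^{m+1}(y))>\delta$ one cannot bound $d(f_1^{n}(x),f_1^{n}(y))$ below by applying the ``tail'' maps, since continuous maps may contract. The correct move---mirroring the distal estimate (\ref{lymg6}) in the Li--Yorke sensitivity theorem---is to realize $f_1^{n}$ as the \emph{inner} factor of a completed period $g^{m+1}=f_{r+1}^{k-r}\circ f_1^{n}$ and invoke uniform continuity in its contrapositive form, where large output separation forces large input separation. Finiteness of the family of completion maps is what renders $\delta'$ independent of both $n$ and $U$; this is exactly where the hypothesis that each $f_p$ is uniformly continuous is indispensable and cannot be dropped.
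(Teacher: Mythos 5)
Your proof is correct and follows essentially the same route as the paper's: the forward direction uses $f_1^{km}=g^m$ directly, and the converse completes each time $n=km+r$ to a full period and applies the contrapositive of uniform continuity of the finitely many completion maps $f_{r+1}^{k-r}$ (the paper writes $n=km-r$ and completes downward, but the idea is identical). If anything, your version is slightly cleaner, since you let the separated pair $x,y$ depend on $n$ as the definition of $N_{f_{1,\infty}}(U,\delta)$ requires, whereas the paper's write-up fixes a single pair for all large times.
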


\begin{proof}

Suppose that $(X,f_{1,\infty})$ is cofinitely sensitive, then there is a sensitive constant $\delta>0$ such that for any non-empty open subset $U\subseteq X$, there exist $x,y\in U$ and $N\in \mathbb{N}$ such that for any $n>N$, $d(f_1^n(x),f_1^n(y))>\delta$. Since $g=f_k\circ f_{k-1}\circ\cdots f_1$, then for $m>N$, we have
$$d(g^m(x),g^m(y))=d(f_1^{km}(x),f_1^{km}(y))>\delta.$$
Therefore $(X,g)$ is also cofinitely sensitive.

Conversely,  let $(X,g)$ be cofinitely sensitive with $\delta>0$ as a constant of sensitivity. Then for any non-empty open subset $U\subseteq X$, there exist $x,y\in U$ and $N\in \mathbb{N}$ such that for any $n>N$, $d(g^n(x),g^n(y))>\delta$. Since $(X,f_{1,\infty})$ is $k$-periodic and each $f_p(p=1,2,...,k)$ is uniformly continuous, then for any positive integer $0\leq r<k$, $f_p^r$ is uniformly continuous. Therefore, for the $\delta$ above, there exist $\delta_1$ such that for any $u,v$ with $d(u,v)<\delta_1$ and any integer $0\leq r<k$, we have $d(f_p^r(u),f_p^r(u))<\delta$. that is, for any $u,v$ and any integer $0\leq r<k$, when $d(f_p^r(u),f_p^r(u))>\delta$, then $d(u,v)\geq\delta_1$. Therefore, for any $m>N+1$, notice that $d(g^m(x),g^m(y))=d(f_1^{km}(x),f_1^{km}(y))>\delta$, which implies that $d(f_1^{km-r}(x),f_1^{km-r}(y))\geq\delta_1>\frac{\delta_1}{2}$, where $0\leq r<k$. Take $M=k(N+2)$, then for any $n>M$, we have $d(f_1^n(x),f_1^n(y))>\frac{\delta_1}{2}$. Therefore $(X,f_{1,\infty})$ is cofinitely sensitive.

\end{proof}

\begin{theorem}
Let $(X,f_{1,\infty})$ be a $k$-periodic non-autonomous discrete system and $g=f_k\circ f_{k-1}\circ\cdots f_1$. If the autonomous discrete system $(X,g)$ is syndetically(ergodially, respectively) sensitive, then $(X,f_{1,\infty})$ is also syndetically(ergodially, respectively) sensitive. Converse is true when each $f_p(p=1,2,...,k)$ is uniformly continuous.
\end{theorem}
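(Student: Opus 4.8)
The plan is to mirror the structure of Theorem \ref{ymg1}, the only new ingredient being that $k$-periodicity gives the identity $g^n=f_1^{kn}$, so that return-time information transfers between the two systems by scaling the index by $k$ (forward) or dividing by $k$ (converse). Since the syndetic and ergodic cases differ only in \emph{which} largeness property of the sets $N_{f_{1,\infty}}(U,\delta)$ and $N_g(U,\delta)$ must be propagated, I would carry out the argument in full for syndetic sensitivity and then observe that it applies verbatim to ergodic sensitivity, replacing ``syndetic'' by ``positive upper density'' throughout.

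For the forward implication, assume $(X,g)$ is syndetically sensitive with constant $\delta$ and fix a nonempty open $U\subseteq X$. Because $g^n=f_1^{kn}$, each $n\in N_g(U,\delta)$ produces $x,y\in U$ with $d(f_1^{kn}(x),f_1^{kn}(y))>\delta$, so $kn\in N_{f_{1,\infty}}(U,\delta)$; that is, $k\cdot N_g(U,\delta)\subseteq N_{f_{1,\infty}}(U,\delta)$. I would then note that multiplying a syndetic set by $k$ keeps it syndetic (consecutive gaps are multiplied by at most $k$) and that any superset of a syndetic set is syndetic, so $N_{f_{1,\infty}}(U,\delta)$ is syndetic and $(X,f_{1,\infty})$ is syndetically sensitive with the same constant $\delta$. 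This direction uses no continuity hypothesis, exactly as in Theorem \ref{ymg1}.

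For the converse, assume each $f_p$ is uniformly continuous and that $(X,f_{1,\infty})$ is syndetically sensitive with constant $\delta$. Fix a nonempty open $U$ and take $n\in N_{f_{1,\infty}}(U,\delta)$ with witnesses $x,y\in U$. Writing $n=km+r$ with $0\le r<k$ and invoking $k$-periodicity, I would record the factorization $f_1^n=f_1^r\circ g^m$, since the $r$ maps following $f_1^{km}$ have indices $km+1,\dots,km+r$, which are congruent to $1,\dots,r$ modulo $k$ and hence equal $f_1,\dots,f_r$. As the finitely many maps $f_1^0=id, f_1^1,\dots,f_1^{k-1}$ are uniformly continuous, for the given $\delta$ there is $\delta_1>0$ such that $d(u,v)<\delta_1$ forces $d(f_1^r(u),f_1^r(v))<\delta$ for every $r\in\{0,\dots,k-1\}$. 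Applying the contrapositive with $u=g^m(x)$, $v=g^m(y)$, the inequality $d(f_1^r(g^m(x)),f_1^r(g^m(y)))=d(f_1^n(x),f_1^n(y))>\delta$ yields $d(g^m(x),g^m(y))\ge\delta_1$, so $m=\lfloor n/k\rfloor\in N_g(U,\delta_1)$. Thus the image of $N_{f_{1,\infty}}(U,\delta)$ under $n\mapsto\lfloor n/k\rfloor$ lies in $N_g(U,\delta_1)$; since this map has fibers of size at most $k$ and contracts gaps by a factor of at most $k$ up to a bounded additive term, it sends a syndetic set to a syndetic set, giving syndetic sensitivity of $(X,g)$ with constant $\delta_1$. (If one prefers to track subsequences rather than whole sets, Lemma \ref{yinli1} supplies the same arithmetic decomposition $n=kq+r$.)

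I expect the converse to be the delicate step, for two reasons. First, uniform continuity must be used only in its contrapositive form, converting a separation of size $\delta$ measured \emph{after} the remainder map $f_1^r$ into a separation of size $\delta_1$ measured \emph{before} it; a direct application of continuity would be useless, since continuous maps may contract distances. Second, one must verify that the index operations preserve the relevant largeness: scaling $n\mapsto kn$ multiplies upper density by $1/k$, while floor-division $n\mapsto\lfloor n/k\rfloor$ cannot decrease upper density because its fibers have size at most $k$, so both syndeticity and positive upper density survive in each direction and the ergodic case follows by the identical reasoning. Both points hinge essentially on $k$-periodicity, which is what makes the factorization $f_1^n=f_1^r\circ g^m$ valid and the family $\{f_1^r\}$ finite.
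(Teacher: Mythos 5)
Your proposal is correct and follows essentially the same route as the paper: the forward direction via $g^n=f_1^{kn}$ and scaling of syndetic (or positive upper density) sets by $k$, and the converse via the division $n=km+r$, the contrapositive of uniform continuity of the finitely many remainder maps (exactly the $\delta_1$ mechanism the paper borrows from Theorem \ref{ymg1}), and the observation that the induced index arithmetic preserves syndeticity. If anything, you are slightly more careful than the paper, which fixes a single pair of witnesses $x,y$ for all times and omits the ergodic case entirely, whereas you track witnesses per $n$ and verify the density bookkeeping explicitly.
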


\begin{proof}

Let $U\subseteq X$ be any nonempty open set. Since $(X,g)$ is syndetically sensitive, there exist $x,y\in U$ and a syndetic sequence $\{n_i\}_{i=1}^{\infty}$ such that $d(g^{n_i}(x),g^{n_i}(y))>\delta$. Thus the sequence $\{kn_i\}_{i=1}^{\infty}$ is also syndetic, then by the fact $g=f_k\circ f_{k-1}\circ\cdots f_1$, one can get that  $d(f_1^{kn_i}(x),f_1^{kn_i}(y))=d(g^{n_i}(x),g^{n_i}(y))>\delta$. Therefore $(X,f_{1,\infty})$ is syndetically sensitive.

 Conversely, suppose that $(X,f_{1,\infty})$ is syndetically sensitive. Thus $N_{f_{1,\infty}}(U,\delta)$ is syndetic. Write $N_{f_{1,\infty}}(U,\delta)$ as an increasing sequence $\{n_i\}_{i=1}^{\infty}$, then there exists an integer $M>0$ such that $n_{i+1}-n_{i}\leq M$ for any $i\in \mathbb{N}$. Let $n_i=km_i+r_i, i\in \mathbb{N}$, where $m_i\in \mathbb{Z_+}, 0\leq r_i<k$. Then $n_{i+1}-n_{i}=km_{i+1}-km_{i}+r_{i+1}-r_{i}\leq M$, which implies that $k(m_{i+1}-m_{i})\leq M+k, \forall i\in \mathbb{N}$. Hence, $\{m_i\}_{i=1}^{\infty}$ is syndetic. In addition, as is shown in Theorem \ref{ymg1}, there exists $\delta_1$ such that for any $i\in \mathbb{N}$, $d(f_1^{kn_i-r_i}(x),f_1^{kn_i-r_i}(y))\geq\delta_1>\frac{\delta_1}{2}$. Hence, $d(g^{m_i}(x),g^{m_i}(y))=d(f_1^{kn_i-r_i}(x),f_1^{kn_i-r_i}(y))>\frac{\delta_1}{2}$. Therefore, $(X,g)$ is syndetically sensitive.

 For ergodially sensitive, the proof is similar to syndetically sensitive and is omitted.

\end{proof}

Let $(X,f_{1,\infty})$ be a $k$-periodic non-autonomous discrete system and $g=f_k\circ f_{k-1}\circ\cdots f_1$. With a similar proof of syndetically sensitive, we can get that if the autonomous discrete system $(X,g)$ is transitive, then $(X,f_{1,\infty})$ is also transitive. However, the converse is incorrect even if $X$ is compact. We will prove it in the following proposition. It is well know that for any positive integer $k\geq2$, there exists a compact autonomous discrete system $(X,f)$ such that $(X,f)$ is transitive but $(X,f^k)$ fails to be transitive {\rm\cite[Example 4]{q4}}.

\begin{proposition}
For any positive integer $k\geq2$, there exists a compact and $k$-periodic non-autonomous discrete system $(X,f_{1,\infty})$ such that  $(X,f_{1,\infty})$ is transitive but $(X,g)$ fails to be transitive, where $g=f_k\circ f_{k-1}\circ\cdots f_1$.
\end{proposition}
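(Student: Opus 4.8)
The plan is to reuse the autonomous example recalled immediately above the statement and to view it as a (degenerate) $k$-periodic non-autonomous system. Concretely, fix $k\geq 2$ and let $(X,f)$ be the compact autonomous discrete system from \cite[Example 4]{q4} for which $(X,f)$ is transitive while $(X,f^k)$ is not transitive. I would then simply define $f_{1,\infty}$ to be the constant sequence $f_i=f$ for every $i\in\mathbb{N}$. This sequence trivially satisfies $f_{i+k}=f_i$ for all $i$, so $(X,f_{1,\infty})$ is $k$-periodic, and $X$ is compact by the choice of the underlying space, so all the standing hypotheses of the statement are met.

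The verification has two short steps. First I would record that for a constant sequence the compositions collapse to iterates, $f_1^n=f\circ\cdots\circ f=f^n$ for every $n\in\mathbb{N}$, so that for any pair of nonempty open sets $U,V\subseteq X$ one has $f_1^n(U)\cap V=f^n(U)\cap V$. Hence $N_{f_{1,\infty}}(U,V)=N_f(U,V)$, and transitivity of $(X,f_{1,\infty})$ is literally equivalent to transitivity of $(X,f)$, which holds by hypothesis. Second, I would compute the induced map $g=f_k\circ f_{k-1}\circ\cdots\circ f_1=f^k$, so that $(X,g)=(X,f^k)$ fails to be transitive, again by the cited example. This already establishes the asserted dichotomy: $(X,f_{1,\infty})$ is transitive while $(X,g)$ is not.

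There is essentially no obstacle here, since all of the genuine difficulty is packaged inside the cited autonomous example \cite[Example 4]{q4}; the non-autonomous framework degenerates entirely for a constant sequence. The one point worth flagging, rather than an obstacle, is why one should resist the temptation to make the example ``honestly'' non-autonomous by inserting identity maps so as to keep $g=f^k$: doing so (for instance $f_i=\mathrm{id}$ on all but the last slot of each period) forces $f_1^n$ to range only over the iterates $f^{k\lfloor n/k\rfloor}$, i.e.\ the dynamics of $f^k$, which destroys transitivity of $(X,f_{1,\infty})$ precisely because $f^k$ is not transitive. It is exactly the fact that the constant sequence realizes \emph{every} power $f^n$ through $f_1^n$ that makes $f_{1,\infty}$ transitive while $g=f^k$ is not, so the constant sequence is the natural and cleanest choice, and I would present it as the proof.
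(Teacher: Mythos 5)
Your proof stands or falls on a definitional point that the paper never settles: whether the constant sequence $f_i=f$ qualifies as a ``$k$-periodic'' non-autonomous system for $k\geq 2$. If $k$-periodic only means $f_{i+k}=f_i$ for all $i$ (no minimality), then your two computations --- $f_1^n=f^n$, so transitivity of $(X,f_{1,\infty})$ is literally transitivity of $(X,f)$, and $g=f^k$, which is non-transitive by \cite[Example 4]{q4} --- are correct and the proof is valid. But under that reading the proposition becomes a word-for-word restatement of the autonomous fact the paper recalls in the sentence immediately preceding it, so it would carry no information about non-autonomous systems; and the paper's own proof conspicuously avoids your example, which is strong evidence that the intended meaning is minimal period $k$ (a convention used in part of the cited NDS literature). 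Under that reading your sequence is $1$-periodic, not $k$-periodic, and the proof has a genuine gap.

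The paper's construction is different and is immune to this objection. For $k\geq 3$ it chooses $f$ transitive with $f^{k-1}$ (not $f^k$) non-transitive and takes the period-$k$ sequence $(f,\dots,f,id)$ with $k-1$ copies of $f$ per period; then $g=id\circ f^{k-1}=f^{k-1}$ fails to be transitive, while $f_1^n$ still runs through every power of $f$, so the NDS remains transitive. This is exactly the ``honest'' identity-insertion you dismissed in your last paragraph: the point you missed is that one need not keep $g=f^k$ --- one retargets to $f^{k-1}$ and chooses $f$ accordingly, which is possible for every $k\geq 3$. For $k=2$ this trick degenerates (it would give $g=f$, which is transitive), so the paper instead takes $h$ transitive and invertible with $h^2$ non-transitive and uses $(h^3,h^{-1},h^3,h^{-1},\dots)$; then $g=h^{-1}\circ h^3=h^2$ is non-transitive, while $f_1^{2m}=h^{2m}$ and $f_1^{2m+1}=h^{2m+3}$ realize all powers $h^m$ with $m\geq 2$, so transitivity survives. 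Both of the paper's sequences have minimal period exactly $k$. To make your argument robust you should replace the constant sequence by such genuinely non-constant ones, or else explicitly justify the non-minimal reading of $k$-periodicity before reducing to the autonomous example.
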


\begin{proof}
If the positive integer $k\geq3$, then there exists a autonomous discrete system $(X,f)$ such that $f$ is transitive but $f^{k-1}$ fails to be transitive in example 4 \cite{q4}. Take $$f_{1,\infty}=\{\underbrace{f,f,...,f}_{k-1},id,\underbrace{f,f,...,f}_{k-1},id,...\},$$ then $g=id\circ f^{k-1}=f^{k-1}$. It is easy to see that $(X,f_{1,\infty})$ is $k$-periodic and transitive but $(X,g)$ is not transitive. If the positive integer $k=2$, then there exists a autonomous discrete system $(X,h)$ such that $h$ is transitive $h^2$ fails to be transitive in example 4 \cite{q4}. Put $f_{1,\infty}=\{h^3,h^{-1},h^3,h^{-1},...\}$. Note that $orb(x,{f_{1,\infty}})=orb(x,f)$ and $g=h^2$, so it is easy to see that $(X,f_{1,\infty})$ is $2$-periodic and transitive but $(X,g)$ is not transitive.

\end{proof}

\cite{q29} proved that the mapping sequence $f_{ 1,\infty}=(f_1,f_2,...)$ is (stronger forms)transitive if and only if the mapping sequence $f_{n,\infty}=(f_n,f_{n+1},...) \forall n\in\mathbb{N}$ is also (stronger forms)transitive. Here we extend this result to (stronger forms)sensitivity.

\begin{theorem}\label{fee}
Suppose that each $f_i(i\in \mathbb{N})$ is surjective and $X$ has no isolated point. If $f_{ 1,\infty}$ is sensitive (multi-sensitive, syndetically sensitive, cofinitely sensitive, ergodically sensitive, respectively), then for any $n\geq2 (n\in \mathbb{N})$, $f_{n,\infty}$ is also sensitive(multi-sensitive, syndetically sensitive, cofinitely sensitive, ergodically sensitive, respectively). Converse is true if $f_{ 1,\infty}$ is feeble open.
\end{theorem}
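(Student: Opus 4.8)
The plan is to exploit a single composition identity relating the two systems. Write $h:=f_1^{n-1}=f_{n-1}\circ\cdots\circ f_1$, so that for every $m\in\mathbb{N}$ one has the factorization $f_1^{(n-1)+m}=f_n^m\circ h$, where $f_n^m=f_{n+m-1}\circ\cdots\circ f_n$ is the $m$-th orbit map of the shifted system $f_{n,\infty}$. Since each $f_i$ is surjective, $h$ is surjective; and if $f_{1,\infty}$ is feeble open then, applying $int(f_i(\cdot))\neq\emptyset$ successively, $h$ carries nonempty open sets to sets with nonempty interior. For a nonempty open $W$ I abbreviate by $N_{f_{1,\infty}}(W,\delta)$ (resp.\ $N_{f_{n,\infty}}(W,\delta)$) the set of times at which $f_{1,\infty}$ (resp.\ $f_{n,\infty}$) $\delta$-separates some pair of points of $W$; each notion of (variant) sensitivity is a statement about these sets (nonempty, cofinite, syndetic, positive upper density, or that finitely many of them meet). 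By induction it would suffice to treat the one-step shift $n=2$, since surjectivity of every map, absence of isolated points, and feeble openness are all inherited by every tail $f_{k,\infty}$; I will nonetheless phrase the argument for general $n$.

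I would argue the converse (feeble open) direction first, as it is the cleaner one. Given a nonempty open $U$, set $V:=\mathrm{int}(h(U))$, which is nonempty by feeble openness. If $u=h(x),v=h(y)$ with $x,y\in U$, then $d(f_n^{m}(u),f_n^{m}(v))=d(f_1^{(n-1)+m}(x),f_1^{(n-1)+m}(y))$ by the factorization, so every $m\in N_{f_{n,\infty}}(V,\delta)$ yields $(n-1)+m\in N_{f_{1,\infty}}(U,\delta)$; that is,
\begin{equation*}
(n-1)+N_{f_{n,\infty}}(\mathrm{int}(h(U)),\delta)\subseteq N_{f_{1,\infty}}(U,\delta).
\end{equation*}
Each required property transfers from $f_{n,\infty}$ to $f_{1,\infty}$ through this inclusion: translating a set by the fixed amount $n-1$ preserves cofiniteness, syndeticity and positive upper density, and a superset of such a set retains the property, while for sensitivity and multi-sensitivity one simply notes the left-hand side is nonempty (and, for the multi case, one takes a common time across $V_1,\dots,V_r:=\mathrm{int}(h(U_1)),\dots,\mathrm{int}(h(U_r))$).

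For the forward direction, surjectivity of $h$ makes $U:=h^{-1}(V)$ a nonempty open set for any nonempty open $V$, and the symmetric computation gives
\begin{equation*}
\big(N_{f_{1,\infty}}(h^{-1}(V),\delta)\cap[n,\infty)\big)-(n-1)\subseteq N_{f_{n,\infty}}(V,\delta).
\end{equation*}
For the cofinite, syndetic and ergodic variants this is enough: deleting the finite initial block $[1,n-1]$ and translating down by $n-1$ preserve the relevant largeness of $N_{f_{1,\infty}}(h^{-1}(V),\delta)$, so the property passes to $N_{f_{n,\infty}}(V,\delta)$. The genuine difficulty is plain (and multi-) sensitivity, where $N_{f_{1,\infty}}(h^{-1}(V),\delta)$ is only known to be nonempty and its witnessing times might all lie in $\{1,\dots,n-1\}$, so the intersection with $[n,\infty)$ could be empty.

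This is the step I expect to be the main obstacle, and I would overcome it by a refinement that forces the separation to occur late. Because $X$ need not be compact, only pointwise continuity of the finitely many partial maps $f_1^1,\dots,f_1^{n-1}$ is available; fixing $p\in h^{-1}(V)$ and using continuity of each of these at $p$, I can choose $\rho>0$ with $B(p,\rho)\subseteq h^{-1}(V)$ and $d(f_1^{j}(y),f_1^{j}(z))<\delta$ for all $y,z\in B(p,\rho)$ and all $j\in\{1,\dots,n-1\}$. Consequently no time in $\{1,\dots,n-1\}$ can $\delta$-separate a pair inside $B(p,\rho)$, i.e.\ $N_{f_{1,\infty}}(B(p,\rho),\delta)\subseteq[n,\infty)$; applying sensitivity (resp.\ multi-sensitivity to balls built from $V_1,\dots,V_r$) to these balls then produces a separating time $m\ge n$, common to all the $V_i$ in the multi case, and $m-(n-1)\ge 1$ is the desired time for $f_{n,\infty}$. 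The absence of isolated points keeps these balls nondegenerate, so the separating pairs are genuine, and surjectivity is exactly what lets $B(p,\rho)$ sit inside a fiber $h^{-1}(V)$; this refinement is the only place where more than the bookkeeping inclusions above is needed.
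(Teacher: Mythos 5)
Your proposal is correct, and its skeleton is the same as the paper's: relate the two systems through $h=f_1^{n-1}$, pull open sets back via $h^{-1}$ (using surjectivity) or push them forward via $\mathrm{int}(h(\cdot))$ (using feeble openness), and shift the sets of separating times by the constant $n-1$. The substantive difference is in the forward direction for plain and multi-sensitivity, and there your version is strictly more careful than the published one. The paper takes $U_1=f_1^{-(n-1)}(U)$, obtains a separating time $m$ for $f_{1,\infty}$ on $U_1$, and then asserts $d(f_n^{m}(x_0),f_n^{m}(y_0))=d(f_n^{m}\circ f_1^{(n-1)}(x),f_n^{m}\circ f_1^{(n-1)}(y))=d(f_1^{m}(x),f_1^{m}(y))>\delta$; but $f_n^{m}\circ f_1^{n-1}=f_1^{n-1+m}$, not $f_1^{m}$, so the second equality is false, and the argument tacitly needs the separating time to be at least $n$ --- something plain sensitivity does not provide, since all witnessing times for $U_1$ could lie in $\{1,\dots,n-1\}$. (The cofinite, syndetic and positive-upper-density variants are immune to this, exactly as you observe, because those largeness notions survive deleting the finite block $\{1,\dots,n-1\}$ and translating by $n-1$.) Your small-ball refinement --- shrinking to $B(p,\rho)\subseteq h^{-1}(V)$ so that, by continuity of the finitely many maps $f_1^{1},\dots,f_1^{n-1}$ at $p$, no time smaller than $n$ can $\delta$-separate two points of the ball --- is precisely what forces $m\ge n$ and makes the time shift legitimate, both for sensitivity and (applied to each ball simultaneously) for multi-sensitivity. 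So your write-up does not merely reproduce the paper's proof; it repairs a genuine gap in it at the one point where the published argument relies on an incorrect composition identity.
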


\begin{proof}

 For any nonempty open set $U\subseteq X$ and any $n\geq2 (n\in \mathbb{N})$, since $f_i$ is a continuous surjection for any $i\in \mathbb{N}$, $U_1:=f_1^{-(n-1)}(U)$ is a nonempty open set. Assume that $f_{ 1,\infty}$ is sensitive with the sensitive constant $\delta$, then there exist $x,y\in U_1$ and $m\in \mathbb{N}$ such that $d(f_1^{m}(x),f_1^{m}(y))>\delta$.
And further, there exist $x_0,y_0\in U$ such that $x_0=f_1^{(n-1)}(x)$ and $y_0=f_1^{(n-1)}(y)$. Therefore, we have
$$d(f_n^{m}(x_0),f_n^{m}(y_0))=d(f_n^{m}\circ f_1^{(n-1)}(x),f_n^{m}\circ f_1^{(n-1)}(y))=d(f_1^{m}(x),f_1^{m}(y))>\delta,$$
which shows that $f_{n,\infty}$ is sensitive.

Conversely, for $U\subseteq X$ be any nonempty open set and any $n\geq2 (n\in \mathbb{N})$, since $int(f_i(U))\neq\emptyset$ for any $i\in \mathbb{N}$, there exists nonempty open set $U_1$ such that $U_1\subseteq f_1^n(U)$. And because $f_{n,\infty}$ is sensitive with the sensitive constant $\delta$, then there exist $x,y\in U_1$ and $m\in \mathbb{N}$ such that $d(f_n^{m}(x),f_n^{m}(y))>\delta$. Thus, there exist $x_0,y_0\in U$ such that $x_0=f_1^{-(n-1)}(x)$ and $y_0=f_1^{-(n-1)}(y)$. Therefore, one can get that
$$d(f_1^{m}(x_0),f_1^{m}(y_0))=d(f_n^{m}\circ f_1^{(n-1)}(x_0),f_n^{m}\circ f_1^{(n-1)}(y_0))=d(f_n^{m}(x),f_n^{m}(y))>\delta,$$
which shows that $f_{1,\infty}$ is sensitive.

With a similar argument, the theorem holds for multi-sensitive, syndetically sensitive, cofinitely sensitive, ergodically sensitive, respectively.

\end{proof}

\begin{theorem}\label{fee1}
Let $(X,f_{1,\infty})$ be a surjective, compact, commutative and $k$-periodic non-autonomous discrete system without isolated points. Suppose that
$f_{1,\infty}$ is transitive and that there exists a nonrecurrent point $x_1\in X$, then $f_{1,\infty}$ is sensitive.
\end{theorem}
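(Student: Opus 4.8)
The plan is to argue by contradiction, using the transitivity dichotomy already proved. Since $f_{1,\infty}$ is $k$-periodic, the set $\{f_i:i\in\mathbb{N}\}=\{f_1,\dots,f_k\}$ is finite, so $f_{1,\infty}$ is finitely generated. As $X$ is compact and $f_{1,\infty}$ is transitive and commutative, Theorem \ref{dengdumingan} applies and yields that $f_{1,\infty}$ is either sensitive or almost equicontinuous. Assuming it is \emph{not} sensitive, I will show that the almost equicontinuous alternative forces every point of $X$ to be recurrent, contradicting the existence of the nonrecurrent point $x_1$; hence $f_{1,\infty}$ must be sensitive. (Note that surjectivity is not needed for this route.)

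Under the almost equicontinuity assumption, $Eq(f_{1,\infty})=\cap_k G_k$ is a dense $G_\delta$, since each $G_k$ is open and dense by Step 2 and Step 4 of the proof of Theorem \ref{dengdumingan}. The set of transitive points is also a dense $G_\delta$: taking a countable base $\{U_j\}$ for $X$, it equals $\cap_j\bigcup_{n\ge0}f_1^{-n}(U_j)$, and each $\bigcup_n f_1^{-n}(U_j)$ is open and, by transitivity, meets every nonempty open set and so is dense. As $X$ is compact and therefore Baire, the intersection of these two residual sets is nonempty; fix a point $\omega$ that is simultaneously transitive and an equicontinuity point. Because $X$ has no isolated points and $orb(\omega,f_{1,\infty})$ is dense, $\omega$ is a limit of its own orbit along indices tending to infinity, so $\omega$ is recurrent: there are $N_i\to\infty$ with $f_1^{N_i}(\omega)\to\omega$.

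The crucial step is to upgrade this pointwise recurrence at $\omega$ to \emph{uniform rigidity}, i.e.\ to produce $N_i\to\infty$ with $\sup_{x\in X}d(f_1^{N_i}(x),x)\to0$. Fix $\varepsilon>0$. Equicontinuity at $\omega$ gives $\delta\in(0,\varepsilon)$ with $d(\omega,y)<\delta\Rightarrow d(f_1^n(\omega),f_1^n(y))<\varepsilon$ for all $n$; recurrence gives a large $N$ with $d(f_1^N(\omega),\omega)<\delta$. For an arbitrary $w\in X$, use that the fixed map $f_1^N$ is uniformly continuous on the compact space $X$ to choose $\rho\in(0,\varepsilon)$ with $d(a,b)<\rho\Rightarrow d(f_1^N(a),f_1^N(b))<\varepsilon$, and (by density of $orb(\omega,f_{1,\infty})$) pick $k$ with $d(f_1^k(\omega),w)<\rho$. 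Commutativity of the family makes the composite maps commute, so $f_1^N(f_1^k(\omega))=f_1^k(f_1^N(\omega))$; combining this with equicontinuity at $\omega$ (applied to $y=f_1^N(\omega)\in B(\omega,\delta)$ and exponent $k$) and the uniform continuity of $f_1^N$, the triangle inequality
$$d(f_1^N(w),w)\le d(f_1^N(w),f_1^N(f_1^k(\omega)))+d(f_1^N(f_1^k(\omega)),f_1^k(\omega))+d(f_1^k(\omega),w)$$
bounds the right-hand side by $\varepsilon+\varepsilon+\rho<3\varepsilon$, uniformly in $w$ (the middle term equals $d(f_1^k(f_1^N(\omega)),f_1^k(\omega))<\varepsilon$ after the commutation). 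Letting $\varepsilon=1/i$ and taking the corresponding $N=N_i\to\infty$ produces uniform rigidity, and then $d(f_1^{N_i}(x_1),x_1)\to0$ with $N_i\to\infty$ shows $x_1$ is recurrent, the desired contradiction.

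The main obstacle is precisely the spreading step in the third paragraph: equicontinuity is assumed only at the single point $\omega$, so the near-identity behaviour of $f_1^N$ is directly available only on a neighbourhood of $\omega$, and transporting it to an arbitrary $w$ requires both the commutativity of the family (to slide $f_1^N$ through $f_1^k$) and the uniform continuity of the \emph{fixed} map $f_1^N$ on the compact space. I expect the delicate point to be verifying that these two ingredients combine to give a bound independent of $w$; once uniform rigidity is established, the contradiction with nonrecurrence is immediate.
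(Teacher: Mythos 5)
Your proposal is correct, and it takes a genuinely different route from the paper's proof. The paper argues directly: from the nonrecurrent point it extracts a separation constant $\alpha:=\inf\{d(x_1,y):y\in\overline{\cup_{i=1}^\infty f_1^i(x_1)}\}>0$, picks a transitive point $x_0$, uses the absence of isolated points to find two return times $m>n$ of $x_0$ into an arbitrary ball $B(x,\delta)$ with $m-n$ divisible by $k$, then uses \emph{surjectivity} and commutativity to bring $f_1^{k'}(f_1^n(x_0))$ within $\min\{\delta_1,\alpha/4\}$ of $x_1$; commutativity plus $k$-periodicity (which turns $f_1^m$ into $f_1^n\circ f_1^{m-n}$) converts $d(x_1,f_1^{m-n}(x_1))\geq\alpha$ into a separation of order $\alpha/2$ between $f_1^{k'}(f_1^m(x_0))$ and $f_1^{k'}(f_1^n(x_0))$, yielding sensitivity with the explicit constant $\alpha/4$. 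You instead argue by contradiction through the dichotomy of Theorem \ref{dengdumingan}, using $k$-periodicity only to get finite generation: non-sensitivity forces every $G_k$ to be nonempty, hence (by Steps 2 and 4 of that proof) open and dense, so $Eq(f_{1,\infty})$ is residual; intersecting with the residual set of transitive points gives a transitive equicontinuity point $\omega$, which is recurrent at arbitrarily large times because $X$ has no isolated points; and your transport step (slide $f_1^N$ past $f_1^k$ by commutativity, then use uniform continuity of the fixed map $f_1^N$ and equicontinuity at $\omega$) correctly yields $\sup_{w\in X}d(f_1^{N_i}(w),w)\to 0$ with $N_i\to\infty$ — the bound $3\varepsilon$ is indeed uniform in $w$ even though $k$ depends on $w$, and pairwise commutativity does permit reordering the compositions — contradicting nonrecurrence of $x_1$. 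Your approach buys more than the stated theorem: surjectivity is never used, $k$-periodicity can be weakened to finite generation, and you in fact show that a transitive, non-sensitive, commutative, finitely generated system on a compact perfect space is uniformly rigid along a sequence of times, so \emph{every} point is recurrent. The paper's approach buys self-containedness (no Baire category, no reliance on the internals of another proof) and an explicit sensitivity constant tied to the nonrecurrent point. One caveat you rightly flagged: the bare statement of Theorem \ref{dengdumingan} gives only a dense set of equicontinuity points, so the appeal to its proof (or the observation that $Eq(f_{1,\infty})\subseteq G_k$ with $G_k$ open forces each $G_k$ to be dense once $Eq(f_{1,\infty})$ is dense) is genuinely needed to upgrade density to residuality; with that noted, there is no gap.
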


\begin{proof}

Suppose that $x_1$ is a nonrecurrent point, that is $x_1\notin \overline{\cup_{i=1}^\infty f_1^i(x_1)}$. Then, let $\alpha:=\inf_{y\in\overline{\cup_{i=1}^\infty f_1^i(x_1)}}d(x_1,y)>0$. Since $f_{1,\infty}$ is transitive and compact, there exists a point $x_0\in X$ such that $\overline{orb(x_0)}=X$ by lemma \ref{yinlicd}. Besides, since $X$ has no isolated point, therefore, for any $x\in X$ and any $\delta>0$, there are $m,n\in \mathbb{N}$ with $m-n=kq, q\in\mathbb{N}$ such that $f_1^m(x_0),f_1^n(x_0)\in B(x,\delta)$. Since each $f_i$ is continuous, there exists $\delta_1>0$ such that for any $x'\in B(x_1,\delta_1)$,
\begin{equation}\label{huigui1}
d(f_1^{m-n}(x'),f_1^{m-n}(x_1))<\frac{\alpha}{4}.
\end{equation}
Put $\delta_2:=min\{\delta_1,\frac{\alpha}{4}\}$. Since  $X$ is a nonisolated and $f_{1,\infty}$ commutative surjection, $X\supseteq\overline{orb(f_{1}^n(x_0),f_{1,\infty})}\supseteq f_{1}^n(\overline{orb(x_0,,f_{1,\infty})})=f_{1}^n(X)=X$, which holds that $\overline{orb(f_{1}^n(x_0),f_{1,\infty})}=X$. Therefore, there exists $k\in \mathbb{N}$ such that
\begin{equation}\label{huigui2}
d(f_{1}^{k}\circ f_1^n(x_0),x_1)<\delta_2.
\end{equation}
 Moreover, since $(X,f_{1,\infty})$ is commutative and $k$-periodic,
 \begin{equation}\label{huigui25}
 f_1^k\circ f_1^m=f_1^k\circ f_1^{m-n}\circ f_{m-n+1}^{n}=f_1^k\circ f_1^{m-n}\circ f_{1}^{n}=f_{m-n}\circ f_1^k\circ f_{1}^{n}.
 \end{equation}
 Then,
\begin{equation}\label{huigui3}
\begin{aligned}
&d(f_1^{k}\circ f_1^m(x_0),f_1^{k}\circ f_1^n(x_0))\\
\geq&d(x_1,f_1^{m-n}(x_1))-d(f_1^{k}\circ f_1^n(x_0),x_1)-d(f_1^{k}\circ f_1^m(x_0),f_1^{m-n}(x_1)) \\
>& \alpha-\frac{\alpha}{4}-\frac{\alpha}{4}\\
=&\frac{\alpha}{2}.
\end{aligned}
\end{equation}
In getting the last inequality, we use (\ref{huigui1}), (\ref{huigui2}) and (\ref{huigui25}).

(\ref{huigui3}) yields that either $$d(f_1^{k}\circ f_1^m(x_0),f_{1}^{k}(x))>\frac{\alpha}{4}$$ or $$d(f_1^{k}\circ f_1^n(x_0),f_{1}^{k}(x))>\frac{\alpha}{4}.$$
Therefore $f_{1,\infty}$ is sensitive.
\end{proof}

\begin{definition}
The non-autonomous system $(X, f_{ 1,\infty} )$  is said to be weakly sensitive, if there is $\delta>0$ such that for any nonempty open set $U\subset X$, there exist $x,y\in U$ and $n$ different positive integers $i_k(k=1,2,...,n)$ such that $$d(f_{i_n}\circ \cdot\cdot\cdot f_{i_2}\circ f_{i_1}(x),f_{i_n}\circ \cdot\cdot\cdot f_{i_2}\circ f_{i_1}(y))>\delta.$$

\end{definition}

\begin{definition}
The non-autonomous system $(X, f_{ 1,\infty} )$  is said to be weakly transitive, if for any two non-empty subset $U,V\subseteq X$, there exist $n$ different positive integers $i_k(k=1,2,...,n)$ such that $$f_{i_n}\circ \cdot\cdot\cdot f_{i_2}\circ f_{i_1}(x)(U)\cap f_{i_n}\circ \cdot\cdot\cdot f_{i_2}\circ f_{i_1}(y))\neq\emptyset.$$

\end{definition}

\begin{definition}
The non-autonomous system $(X, f_{ 1,\infty} )$  is said to be weakly Li-Yorke chaos, if there exists an uncountable subset $S\subseteq X$ such that for any different points $x,y\in S$, we have $$\liminf_{n\rightarrow\infty}d(f_{i_n}\circ \cdot\cdot\cdot f_{i_2}\circ f_{i_1}(x),f_{i_n}\circ \cdot\cdot\cdot f_{i_2}\circ f_{i_1}(y))=0$$ and $$\limsup_{n\rightarrow\infty}d(f_{i_n}\circ \cdot\cdot\cdot f_{i_2}\circ f_{i_1}(x),f_{i_n}\circ \cdot\cdot\cdot f_{i_2}\circ f_{i_1}(y))>\delta,$$ where $i_j\neq i_k$ for any $j\neq k$.

\end{definition}

\begin{example}

Let $(X,d)$ be a  metric space and $f:X\rightarrow X$ be sensitive (transitive, Li-Yorke chaotic, respectively). Suppose that $f$ has inverse $f^{-1}$. Let
\begin{equation*}
\begin{aligned}
f_{1,\infty}=&\{f_1,f_2,...f_n,...\}\\
=&\{f,f^{-1},f,f^{-1},f,f^{-1},... \}.
\end{aligned}
\end{equation*}

\end{example}

It is easy to see that $f_{1,\infty}$ is weakly sensitive (transitive, Li-Yorke chaotic, respectively) but not sensitive (transitive, Li-Yorke chaotic, respectively).

\section{Open Questions}

\noindent\textbf{Question 1.} There are many open questions for the relationships between chaos, sensitivity and transitivity. For example, it is know that weakly mixing implies Li-Yorke chaos and that Devaney chaos plus shadowing implies distributional chaos in ADS, whether they are still true in NDS. If not, what condition has to satisfy to ensure they are still true in NDS.

\noindent\textbf{Question 2.}  For ADS, if dynamical property P is preserved under iterations, then for $k$-periodic non-autonomous discrete system, is it true that the ADS $(X,g)$ has property P if and only if $(X,f_{1,\infty})$ has property P, where $g=f_k\circ f_{k-1}\circ\cdots f_1$.

\noindent\textbf{Question 2.}  Wu and Zhu prove that both Li-Yorke chaos and sensitivity of NDS are preserved under iterations when $f_{1,\infty}$ converges  uniformly to $f$ and that neither Li-Yorke chaos nor sensitivity of NDS is iteration invariants if we remove the condition 'converges  uniformly'. Whether these results are still true for weakly Li-Yorke chaos and sensitivity of NDS.



\end{document}